\documentclass[11pt]{amsart}
\usepackage{tikz}
\usepackage{tikz-cd}
\usepackage{mathrsfs}
\usepackage[mathcal]{euscript}
\usepackage{graphicx}
\usepackage{amsthm,amscd}
\usepackage{calc}
\usepackage{color}

\usepackage{mathrsfs,dsfont}
\usepackage{fancyhdr,amscd}
\usepackage{anysize}
\usepackage{amsmath,amssymb,amsfonts}
\usepackage{epsfig,enumerate}
\usepackage{latexsym}
\usepackage{indentfirst, latexsym}
\usepackage{graphics}
\usepackage[all,poly,knot]{xy}
\usepackage[pagebackref]{hyperref}
\usepackage{lineno}

\newcommand{\comment}[1]{}

\usepackage{fancyhdr}
\providecommand{\U}[1]{\protect\rule{.1in}{.1in}}

\numberwithin{equation}{section}

\theoremstyle{plain}
\newtheorem{theorem}{Theorem}[section]  
\newtheorem{lemma}[theorem]{Lemma}      

\newtheorem{proposition}[theorem]{Proposition}
\newtheorem{corollary}[theorem]{Corollary}

\theoremstyle{definition}
\newtheorem{definition}[theorem]{Definition}

\newtheorem{remark}[theorem]{Remark}
\newtheorem{claim}[theorem]{Claim}
\newtheorem{observation}[theorem]{Observation}
\newtheorem{fact}[theorem]{Fact}

\newcounter{proofstep}
\newenvironment{step}[1][]{%
	\stepcounter{proofstep}%
	\par\vspace{12pt}%
	\noindent\textbf{Step \theproofstep. #1}\quad
}{\par\vspace{12pt}}

\begin{document}
	
	\title[BBF positivity and   Moishezonness]{Beauville--Bogomolov--Fujiki positivity and Moishezonness of
		complex symplectic manifolds in Fujiki class $\mathscr C$}
	
	\author[Jian Chen]{Jian Chen}
	
	\address{Jian Chen, School of Mathematics and Statistics, Central China Normal University,
		Wuhan 430079, People's Republic of China}
	\email{jian-chen@whu.edu.cn}

	\date{\today}
	\subjclass[2020]{Primary 14J42; Secondary 14E05, 32J27, 14D05, 32G20.}
	\keywords{Holomorphic symplectic varieties; Bimeromorphic map, Fujiki class, Moishezon manifold, Structure of families.}

	\begin{abstract}
		Motivated by the principle that positivity of the Beauville--Bogomolov--Fujiki (BBF) form controls the geometry of hyperk\"ahler manifolds, we study BBF positivity from the perspective of bimeromorphic geometry, focusing on its relation  to the Moishezonness of hyperfujiki manifolds
		(natural bimeromorphic analogues of hyperk\"ahler manifolds).
		We prove  that any Moishezon hyperfujiki manifold
		carries a big line bundle with positive BBF square. We also prove that
		the existence of a BBF-positive integral $(1,1)$-class implies
		Moishezonness for hyperfujiki $4$-folds and, more generally, for
		hyperfujiki $2n$-folds under a weak K\"ahler minimal-model condition.
		The proof mainly uses the theory of primitive symplectic
		varieties developed by B. Bakker--C. Lehn,  a  
		construction  of small bimeromorphic models for certain $K$-trivial
		manifolds by I. Biswas--J. Cao--S. Dumitrescu--H. Guenancia, and the theory of pull-backs of reflexive differential forms established by
		S. Kebekus--C. Schnell.  As an application, we give a
		Hodge-theoretic description of the Moishezon locus in certain smooth
		families by combining this criterion with the period theory developed
		by B. Anthes--A. Cattaneo--S. Rollenske--A. Tomassini.

	\end{abstract}
	
	\maketitle
	
	\setcounter{tocdepth}{1}
	\tableofcontents

	\section{Introduction}
	
	A basic principle in hyperk\"ahler geometry (e.g., \cite[Introduction]{Huy99}) is that, for a 
	hyperk\"ahler manifold $X$, the Beauville--Bogomolov--Fujiki (BBF) form (Definitions \ref{def-bbf-fourfold} and \ref{def-bbf-primitive-fourfold}) $q_X$ on 
	$H^2(X,\mathbb{Z})$ controls the geometry of $X$. In this paper, we are only concerned with the geometry described by BBF positivity.  For example, Huybrechts showed that 
	a hyperk\"ahler manifold 
	$X$ is projective if and only if there exists a class
	$\alpha\in H^2(X,\mathbb Z)\cap H^{1,1}(X)$ such that
	$q_X(\alpha)>0$ (\cite[Theorem 2]{Huy01}, \cite[Theorem 3.11]{Huy99}). 
	Recently, Bakker--Lehn generalized (\cite[Theorem 1.2]{BL22}) Huybrechts' projectivity criterion to primitive
	symplectic varieties (Definition \ref{def-symp-varie}).
	
	Note that such BBF-theoretic projectivity criteria have important applications, such as showing
	that birational projective hyperk\"ahler manifolds are deformation equivalent and
	diffeomorphic
	(\cite[Theorem 4.6 and Corollary 4.7]{Huy99})
	and that any primitive symplectic variety is locally trivially deformation equivalent
	to a projective primitive symplectic variety
	(\cite[Corollary 1.3]{BL22}).

	In this paper, we investigate the corresponding BBF-positivity questions
	from the viewpoint of bimeromorphic geometry. In this context, Moishezon
	manifolds are the natural bimeromorphic counterparts of projective
	manifolds, while hyperfujiki manifolds
	(Definition \ref{def-hyperfujiki-hyperka}) play the corresponding role of
	hyperk\"ahler manifolds (as shown by
	Remark \ref{remark-hyfu-nothyperkah}).  Note that hyperfujiki manifolds form  a special class of simple
	$\partial\bar\partial$-complex symplectic manifolds
	(Definition \ref{sec-prelim-ddbar}). Thanks to the work of Anthes--Cattaneo--Rollenske--Tomassini
	(\cite{ACRT18}) and Cattaneo--Tomassini (\cite{CT18}), the theory of BBF
	forms and period maps for such manifolds has been developed.

	A natural question arises: is a hyperfujiki manifold Moishezon if and only if it carries an integral $(1,1)$-class with positive BBF square? In the only-if direction, both \cite{Huy01} and \cite{BL22} use the first Chern class of an ample line bundle, which is a K\"ahler class and therefore has positive BBF square. For the converse direction, \cite{Huy01} needs to approximate a BBF-positive class by K\"ahler classes, whereas \cite{BL22} replaces K\"ahler classes with Demailly--P\u{a}un classes. However, a non-K\"ahler Moishezon manifold has no K\"ahler classes and may well have no Demailly--P\u{a}un classes (\cite[Remark 6.3]{BL22}). Thus, the arguments of \cite{Huy01} and \cite{BL22} do not seem directly adaptable to Moishezon hyperfujiki manifolds. Recall that a compact complex manifold is Moishezon if and only if it admits a big line bundle. However, even on a projective hyperk\"ahler manifold, a big line bundle may have negative BBF square (see Fact \ref{fact-big-notposi}). Therefore, the only-if direction does not follow directly from the existence of a big line bundle, a priori.

	Our rough strategy is as follows. By combining I. Biswas–J. Cao–
	S. Dumitrescu–H. Guenancia \cite[Proof of Theorem 3.8, Step 1]{BCDG25}  with Claudon--H\"oring \cite[Lemma 2.11]{CH24}, we obtain, for any Moishezon hyperfujiki manifold $Y$,  
	a compact K\"ahler strongly $\mathbb Q$-factorial primitive
	symplectic variety $X$ with only terminal singularities, together with
	a small bimeromorphic map
	\[
	\phi:Y\dashrightarrow X.
	\]
	Note that $X$ is  projective by  Namikawa's
	projectivity criterion \cite[Theorem 1.6]{Nam02}. Thus, we may choose an ample line
	bundle $L_X$ on $X$, whose first Chern class has positive BBF square.
	We then transport $L_X$ to a big line bundle $L_Y$ on $Y$ and compare
	$q_X(c_1(L_X))$ with $q_Y(c_1(L_Y))$. We show that these two quantities  agree up to multiplication by a positive constant, which yields  $q_Y(c_1(L_Y))>0.$

	This strategy leads to the following theorem, which asserts that any
	Moishezon hyperfujiki manifold admits a big line bundle whose first
	Chern class has positive BBF square.

	\begin{theorem}[{=Theorem \ref{thm-arb-positive}}]\label{thm-arb-positive-intro}
		Let $(Y,\sigma_Y)$ be a hyperfujiki $2n$-fold and $q_Y$ the BBF form. Then
		\[
		Y\text{ is Moishezon}
		\Longrightarrow \text{ there exists a big line bundle } L_Y \text{ such that }
		q_Y(c_1(L_Y))>0.
		\]
	\end{theorem}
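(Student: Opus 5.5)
We follow the strategy sketched in the introduction. Let $Y$ be a Moishezon hyperfujiki $2n$-fold. First, we produce a small bimeromorphic model. Since $Y$ is Moishezon with trivial canonical bundle, we run the argument of \cite[Proof of Theorem 3.8, Step 1]{BCDG25}: take a projective resolution $\mu\colon \widetilde Y\to Y$ and run a relative/absolute MMP on $\widetilde Y$ to contract the $\mu$-exceptional divisors. This yields a projective (hence K\"ahler) variety $X$ with terminal singularities and $K_X\sim 0$, together with a bimeromorphic map $\phi\colon Y\dashrightarrow X$ that is an isomorphism in codimension one. By \cite[Lemma 2.11]{CH24} we may arrange $X$ to be strongly $\mathbb Q$-factorial. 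We then check that $X$ is a primitive symplectic variety in the sense of Definition \ref{def-symp-varie}. The form $\sigma_Y$ transports across the codimension-one isomorphism to a reflexive $2$-form $\sigma_X$ that is nondegenerate on $X_{\mathrm{reg}}$. By Kebekus--Schnell, $\sigma_X$ extends holomorphically to every resolution. Moreover, $h^0(\widetilde X,\Omega^1)=0$ and $h^0(\widetilde X,\Omega^2)=1$ are bimeromorphic invariants inherited from $Y$. Projectivity of $X$ is also available from \cite[Theorem 1.6]{Nam02}.

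Second, we choose an ample line bundle $L_X$ on $X$. The class $c_1(L_X)$ is K\"ahler, so $q_X(c_1(L_X))>0$ by the Bakker--Lehn theory. Since $\phi$ is small, it induces an isomorphism $\phi^*\colon \operatorname{Pic}(X)\otimes\mathbb Q\cong \operatorname{Pic}(Y)\otimes\mathbb Q$ on Weil$=$Cartier divisor classes, using that $X$ is $\mathbb Q$-factorial and $Y$ is smooth. Replacing $L_X$ by a suitable multiple, we set $L_Y:=\phi^*L_X$, meaning the extension of $L_X|_{U}$ across codimension $\ge2$, where $U\subset X$ is the open set on which $\phi^{-1}$ is an isomorphism. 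Then $H^0(Y,L_Y^{m})=H^0(X,L_X^{m})$ by Hartogs, so $L_Y$ is big.

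Third, and this is the main obstacle, we compare BBF forms. We take a common resolution $p\colon Z\to Y$, $r\colon Z\to X$. For a class $\alpha$ on $X$ we have $\phi^*\alpha=p_*r^*\alpha$, and $r^*\alpha$ differs from $p^*\phi^*\alpha$ by $p$-exceptional divisors. The BBF forms are defined by the integrals $\int (\sigma\bar\sigma)^{n-1}\alpha^2$ and $\int\sigma^{n-1}\bar\sigma^n\alpha$, normalized by $\int(\sigma\bar\sigma)^n$. We pull everything back to $Z$, where $p^*\sigma_Y=r^*\sigma_X=\sigma_Z$ up to a constant by Kebekus--Schnell. The key point is then that the exceptional correction terms integrate to zero. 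Indeed, $\sigma_Z$ restricted to an exceptional divisor $E$ has rank dropping, because $E$ maps to something of dimension $\le 2n-2$, so $\sigma_Z^{n-1}\bar\sigma_Z^{n-1}|_E$ vanishes when its image has dimension $<2n-2$. The remaining cases must be handled via the projection formula. This shows $q_Y(\phi^*\alpha)=c\,q_X(\alpha)$ with $c>0$, in the spirit of Huybrechts' argument for birational hyperk\"ahler manifolds. If the vanishing is delicate, we would instead use the Fujiki relation $\int\alpha^{2n}=c\,q(\alpha)^n$ together with a deformation/period argument, but the direct integral comparison is the first approach to try. Altogether this gives $q_Y(c_1(L_Y))>0$.
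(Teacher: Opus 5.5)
Your outline follows the paper's architecture: a small terminal strongly $\mathbb Q$-factorial model from \cite{BCDG25} and \cite{CH24}, primitive symplecticity, projectivity, an ample $L_X$, the reflexive transform $L_Y$ with $h^0(Y,L_Y^{m})=h^0(X,L_X^{m})$, and a comparison of $\int\alpha^2(\sigma\bar\sigma)^{n-1}$ on a common resolution. Those parts are fine. The gap is the step you yourself call the main obstacle.

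Write $p^*\phi^*\alpha-r^*\alpha=\sum_i a_iE_i$, with the $E_i$ lying over the locus where $\phi$ is not an isomorphism. The difference of the two integrals is then $\sum_i a_i\int_Z[E_i]\wedge(p^*\phi^*\alpha+r^*\alpha)\wedge\sigma_Z^{n-1}\wedge\bar\sigma_Z^{n-1}$. You only dispose of the $E_i$ with $\dim p(E_i)\le 2n-3$. For $\dim p(E_i)=2n-2$ you merely say it ``must be handled via the projection formula'', and then fall back on an unspecified Fujiki-relation/period argument. This case is the whole difficulty, and it cannot be settled by looking at $Y$ alone. The $p^*\phi^*\alpha$-term does vanish by the projection formula for $p$. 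But substituting $r^*\alpha=p^*\phi^*\alpha-\sum_i a_iE_i$ turns the other term into $-\int_Z(\sum_i a_iE_i)^2\wedge\sigma_Z^{n-1}\wedge\bar\sigma_Z^{n-1}$. This pushes forward to a combination of integrals $\int_S(\sigma_Y\bar\sigma_Y)^{n-1}$ over the codimension-two sets $S=p(E_i)$. Nothing on the $Y$-side forces these to vanish; for instance, they are nonzero when $\sigma_Y$ restricts to a symplectic form on $S$. So the proof must use that each $E_i$ is also $r$-exceptional, i.e.\ the smallness of $\phi$ seen from the $X$-side, and your proposal never does this.

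The paper closes the gap with Claim~\ref{claim-boundary-power-vanishing}:
\begin{enumerate}
\item On the normalized graph it finds a $(2n-1)$-dimensional $F$ dominating $S$.
\item Using Kebekus--Schnell reflexive pull-back, which works even when $r(F)\subset X_{\rm sing}$, it writes $\sigma_Z|_F$ as a pull-back both from $S$ and from $T=r(F)$.
\item The two fibre directions are independent and both lie in the kernel, so parity forces the rank to be at most $2n-4$.
\item Hence $(\sigma_Y|_{S_{\rm reg}})^{n-1}=0$, and the correction terms vanish pointwise.
\end{enumerate}

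Your projection-formula idea can also be completed, more cheaply, if you make it two-sided. By Bakker--Lehn's Hodge theory for rational singularities, $[\sigma_Z]=r^*s$ for some class $s\in H^{2}(X,\mathbb C)$. This is a cohomological statement, not the form-level identity $p^*\sigma_Y=r^*\sigma_X$ you attribute to Kebekus--Schnell; the paper implicitly uses it in \eqref{eq-integral-X-general-step}. It gives $r^*\alpha\cdot[\sigma_Z]^{n-1}[\bar\sigma_Z]^{n-1}=r^*(\alpha\, s^{n-1}\bar s^{\,n-1})$. The pairing of this class with $[E_i]$ vanishes because $r(E_i)$ has real dimension at most $4n-4$. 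One of these two arguments has to be written out; as it stands, the key equality $q_Y(c_1(L_Y))=\lambda\, q_X(c_1(L_X))$ with $\lambda>0$ is not established.
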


	We briefly describe the main technical point in the proof of Theorem \ref{thm-arb-positive-intro}. Once the
	small bimeromorphic map $\phi:Y\dashrightarrow X$ and the ample line
	bundle $L_X$ have been obtained, the construction of $L_Y$ is standard:
	we pass to a common resolution
	\[
	\mu:W\longrightarrow Y,
	\qquad
	\nu:W\longrightarrow X,
	\]
	and take the reflexive hull on $Y$ of the corresponding sheaf. This yields the required big line bundle $L_Y$.
	The main difficulty is to compare
	$q_X\bigl(c_1(L_X)\bigr)$ with $q_Y\bigl(c_1(L_Y)\bigr)$.
	
	Philosophically motivated by  the   coisotropicity-type (\cite[Theorem 1.2]{Wie03}) and isotropicity-type (\cite[Lemma 2.1]{WW03} or  \cite[Chapter 4]{Wie00} or \cite[\S 4]{Kl01}) phenomena for certain exceptional divisors, for obtaining this comparison, we establish the vanishing (see Claim
	\ref{claim-boundary-power-vanishing}) of the $(n-1)$-power of the pulled-back symplectic form along certain boundary divisors.
	In proving this boundary
	vanishing,  the existence, functoriality, and compatibility of
	pull-backs of reflexive differential forms established by
	Kebekus--Schnell
	\cite[Theorems 1.10 and 14.1, Note 1.6.1, and Fact 14.3]{KS21}
	play an important role: they allow the relevant
	pull-backs to be compared even when the image under consideration is
	contained entirely in the singular locus of $X$.

	For the converse direction, motivated by the $4$-dimensional K\"ahler
	minimal model program of Das--Hacon--P\u{a}un \cite{DHP24}, we introduce the
	condition $\left(\mathrm{KMM}_{0,2n}\right)$
	(Definition \ref{def-kmm-zero}). This condition is weaker than assuming
	the full K\"ahler minimal model program in dimension $2n$ (see Observation \ref{rem-kmm-relation-dim 4}). Under
	$\left(\mathrm{KMM}_{0,2n}\right)$, we can likewise construct a small
	primitive symplectic model $X$ for any hyperfujiki $2n$-fold $Y$.
	If $Y$ carries a holomorphic line bundle $L_Y$ such that
	$q_Y(c_1(L_Y))>0$, we may produce a
	corresponding holomorphic line bundle $L_X$ on $X$.
	The aforementioned comparison
	remains valid and yields
	$q_X(c_1(L_X))>0$. The projectivity criterion of Bakker--Lehn
	\cite[Theorem 1.2]{BL22} then implies that $X$ is projective. Thus $Y$
	is Moishezon.

	\begin{theorem}[{=Corollaries \ref{cor-hf-moishezon}+\ref{cor-hf-moishezon-4dim}}]
		\label{cor-hf-moishezon-intro}
		Let $(Y,\sigma_Y)$ be a hyperfujiki manifold. Assume that either
		$\dim Y=4$, or $\dim Y=2n$ and
		$\left(\mathrm{KMM}_{0,2n}\right)$ holds ($n\geq 3$).
		Then
		\[
		Y\text{ is Moishezon}
		\Longleftrightarrow
		\exists v\in H^2(Y,\mathbb{Z})\cap H^{1,1}(Y)
		\text{ such that }q_Y(v)>0.
		\]
	\end{theorem}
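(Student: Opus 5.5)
The plan is to prove the two implications separately. The forward implication follows from Theorem \ref{thm-arb-positive-intro}. The backward implication uses a small bimeromorphic primitive symplectic model $X$ of $Y$, together with the Bakker--Lehn projectivity criterion \cite[Theorem 1.2]{BL22}.

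($\Rightarrow$) If $Y$ is Moishezon, Theorem \ref{thm-arb-positive-intro} gives a big line bundle $L_Y$ with $q_Y(c_1(L_Y))>0$. Then $v=c_1(L_Y)\in H^2(Y,\mathbb Z)\cap H^{1,1}(Y)$ is the required class. No dimension hypothesis is needed in this direction.

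($\Leftarrow$) I proceed in four steps.

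\emph{Constructing the model.} I first produce a small bimeromorphic map $\phi:Y\dashrightarrow X$, where $X$ is a compact K\"ahler, strongly $\mathbb Q$-factorial primitive symplectic variety with terminal singularities. In the Moishezon case this came from \cite{BCDG25} and \cite{CH24}. Here $Y$ is only assumed to be in class $\mathscr C$, so I instead run the relevant part of a K\"ahler MMP on a K\"ahler resolution $W\to Y$ and contract the divisors that are exceptional over $Y$. In dimension $4$ this uses Das--Hacon--P\u{a}un \cite{DHP24}; in dimension $2n$ it is exactly what $\left(\mathrm{KMM}_{0,2n}\right)$ provides (Observation \ref{rem-kmm-relation-dim 4}). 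Since $K_Y$ is trivial, the resulting $K$-trivial minimal model is isomorphic to $Y$ in codimension one. The symplectic form then extends as a reflexive form, and the Kebekus--Schnell extension results show that $X$ is primitive symplectic.

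\emph{Transporting the line bundle.} Because $\phi$ is small, it induces an isomorphism $\phi_*:H^2(Y,\mathbb Z)\to H^2(X,\mathbb Z)$ on divisor classes. Concretely, I take the reflexive hull of $\nu_*\mu^*L_Y$ through a common resolution $\mu:W\to Y$, $\nu:W\to X$. Strong $\mathbb Q$-factoriality, after passing to a multiple if needed, gives a line bundle $L_X$ with $c_1(L_X)=\phi_*v$ up to a positive integer multiple.

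\emph{Comparing BBF forms.} This is the step I expect to be the main obstacle. I will show that $q_X(\phi_*v)=c\,q_Y(v)$ for some $c>0$, using the same comparison as in the forward direction. Both forms can be computed on $W$ from $\mu^*\sigma_Y$ and $\nu^*\sigma_X$, which coincide up to scaling. The difference between $\mu^*v$ and $\nu^*\phi_*v$ is supported on divisors that are exceptional for $\mu$ or $\nu$. Their contribution vanishes by the boundary vanishing of the $(n-1)$-st power of the pulled-back symplectic form (Claim \ref{claim-boundary-power-vanishing}), which rests on the reflexive pull-back functoriality of \cite{KS21}. The normalizations are then matched using the Fujiki relations on both sides. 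Hence $q_X(c_1(L_X))>0$.

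\emph{Concluding.} By \cite[Theorem 1.2]{BL22}, $X$ is projective. Since $Y$ is bimeromorphic to $X$, it is Moishezon.
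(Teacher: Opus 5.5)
Your proposal follows the paper's proof essentially step for step. For $(\Rightarrow)$ you use Theorem \ref{thm-arb-positive}; for $(\Leftarrow)$ you use the small primitive symplectic model of Proposition \ref{prop-kmm-model}, the reflexive transport $(\nu_*\mu^*L)^{**}$ followed by a reflexive power, the BBF comparison through Claim \ref{claim-boundary-power-vanishing}, and \cite[Theorem 1.2]{BL22}, exactly as in Theorem \ref{thm-positive-moishezon}.

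Two corrections. First, $(\mathrm{KMM}_{0,2n})$, and likewise \cite{DHP24}, only provides a terminal model with nef $\omega^{[\ell]}$, so "exactly what $(\mathrm{KMM}_{0,2n})$ provides" overstates it. Smallness has to come from the negativity lemma (Claim \ref{claim-small-BCDG}). Strong $\mathbb Q$-factoriality, which you need to make $\mathscr L_X^{[m]}$ invertible, has to come from \cite[Lemma 2.11]{CH24} together with the terminality check of Claim \ref{claim-termi-sin}; this is what Lemma \ref{lem-sqf-model-KMM} does. Second, the integral isomorphism $H^2(Y,\mathbb Z)\cong H^2(X,\mathbb Z)$ you assert is unjustified, but it is also unnecessary, because you actually work with the concrete sheaf construction.
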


	As an application of the BBF-positivity characterization of Moishezonness (and projectiveness), combined with the period theory developed by Anthes–-Cattaneo–-Rollenske--Tomassini \cite{ACRT18}, we obtain the following description of the Moishezon locus for certain smooth families.

	\begin{theorem}[{=Theorem \ref{thm-intro-hf-copy2}}]\label{thm-intro-hf-copy2-intro}
		Let $f:\mathcal{X}\to S$ be a proper holomorphic submersion from a complex
		manifold $\mathcal{X}$ to a connected simply connected complex manifold $S$, with
		connected fibers. Assume that one of the following two conditions holds:
		\begin{enumerate}
			\item 
			either each fiber $X_t:=f^{-1}(t)$ is a hyperfujiki $4$-fold, or each fiber $X_t$ is a hyperfujiki $2n$-fold and $\left(\mathrm{KMM}_{0,2n}\right)$ holds ($n\geq 3$);
			\item every fiber $X_t$ is a hyperk\"ahler manifold.
		\end{enumerate}
		Then the Moishezon locus, which in the hyperk\"ahler case coincides with the
		projective locus,
		\[
		\operatorname{Moi}(f):=\{\,t\in S\mid X_t\text{ is Moishezon}\,\},
		\]
		is either  $S$ or    at most  (possibly empty) a countable
		union of hypersurfaces of $S$.
	\end{theorem}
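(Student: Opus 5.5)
We plan to follow the standard Noether--Lefschetz-type argument for period domains, reducing Moishezonness of a fiber to a Hodge-theoretic condition via the BBF-positivity criterion. Since $S$ is connected and simply connected, the local system $R^2f_*\mathbb{Z}$ is trivial. We therefore fix a reference lattice $\Lambda:=H^2(X_0,\mathbb{Z})$ and identify every $H^2(X_t,\mathbb{Z})$ with $\Lambda$ via parallel transport. The BBF forms $q_{X_t}$ are locally constant under this identification: in the hyperfujiki setting this follows from the Fujiki relation and the theory of \cite{ACRT18}, up to a common positive normalization, and in the hyperk\"ahler case it is classical. They therefore define a single nondegenerate form $q$ on $\Lambda$. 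The period map $\mathcal{P}:S\to \Omega_\Lambda=\{[\sigma]\in\mathbb{P}(\Lambda_\mathbb{C}) : q(\sigma)=0,\ q(\sigma,\bar\sigma)>0\}$, $t\mapsto[\sigma_t]$, is holomorphic. The needed inputs are that $\partial\bar\partial$ and simplicity are preserved along the family, and that $H^{2,0}$ is one-dimensional with $H^{1,1}(X_t)=\sigma_t^{\perp_q}\cap\overline{\sigma_t}^{\perp_q}$; these are supplied by \cite{ACRT18} and \cite{CT18}.

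Next we characterize the locus. By Theorem \ref{cor-hf-moishezon-intro} in case (1), or Huybrechts' criterion \cite{Huy01} in case (2), we have $t\in\operatorname{Moi}(f)$ iff there is $v\in\Lambda$ with $q(v)>0$ and $v\in H^{1,1}(X_t)$. For integral (hence real) $v$, the condition $v\in H^{1,1}(X_t)$ is equivalent to $q(v,\sigma_t)=0$, since the conjugate condition then holds automatically. Hence
\[
\operatorname{Moi}(f)=\bigcup_{v\in\Lambda,\ q(v)>0} Z_v,\qquad Z_v:=\{t\in S \mid q(v,\sigma_t)=0\}.
\]
Locally choose a holomorphic nowhere-vanishing section $\sigma_t$ of $f_*\Omega^2_{\mathcal{X}/S}$. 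The function $t\mapsto q(v,\sigma_t)$ is then holomorphic, and $Z_v$ is the zero set of a global section of the line bundle $\mathcal{P}^*\mathcal{O}(1)$. Thus each $Z_v$ is either all of $S$ or an analytic hypersurface (possibly empty), because $S$ is a connected manifold and the zero locus of a not-identically-zero section of a line bundle has pure codimension one.

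Finally we use countability. $\Lambda$ is countable, so if some $Z_v=S$ then $\operatorname{Moi}(f)=S$. Otherwise $\operatorname{Moi}(f)$ is a countable union of hypersurfaces, possibly empty. In the hyperk\"ahler case, Moishezon is equivalent to projective, since Moishezon plus K\"ahler implies projective by Moishezon's theorem, and this gives the stated coincidence of loci.

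The main obstacle is the first step in the hyperfujiki case. There we must verify that the BBF forms glue to a locally constant form on $R^2f_*\mathbb{Z}$, and that $H^{1,1}(X_t)$ equals the $q$-orthogonal of $H^{2,0}\oplus H^{0,2}$ for non-K\"ahler fibers. We would handle this by appealing to the local Torelli/period results of \cite{ACRT18} for $\partial\bar\partial$ simple symplectic manifolds, together with the Fujiki relation, which pins down $q$ up to a positive scalar that is constant in connected families.
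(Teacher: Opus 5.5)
Your proposal is correct and follows essentially the same route as the paper. You trivialize $R^2f_*\mathbb{Z}$, compare the transported BBF forms with a fixed $q$ up to positive scalars, describe $D_v$ as the zero locus of $t\mapsto q(v,\sigma_t)$ via the BBF-orthogonal Hodge decomposition of \cite{CT18} and the holomorphic period map of \cite{ACRT18}, and finish with the BBF criterion for Moishezonness (Huybrechts in the hyperk\"ahler case) and the countability of $\Lambda$.

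The one point to argue explicitly is the positivity of the comparison scalar, in the step you flag as the main obstacle. For even $n$, a Fujiki-type relation alone determines this scalar only up to sign. The paper obtains positivity from \cite[Corollaries 4.4 and 4.7]{ACRT18} by evaluating on $\sigma_t+\overline{\sigma_t}$ near a base point and chaining along paths; it also follows from continuity in $t$ and connectedness of $S$.
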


	The remaining sections are organized as follows.
	In Section \ref{sec-preli}, we recall the necessary preliminaries on the
	Fujiki class, Moishezon manifolds, $\partial\bar\partial$-complex symplectic
	manifolds, hyperfujiki manifolds, primitive symplectic varieties, and BBF
	forms.
	In Section \ref{sec-arb-dim-moishezon-positive}, we construct the small
	primitive symplectic models needed for the argument and prove
	Theorem \ref{thm-arb-positive}; the main ingredients are the boundary-power
	vanishing in Claim \ref{claim-boundary-power-vanishing} and the resulting
	comparison of the BBF forms.
	In Section \ref{sec-posi-mois}, we prove Theorem
	\ref{thm-positive-moishezon}, introduce the weak K\"ahler minimal-model
	condition $\left(\mathrm{KMM}_{0,d}\right)$, verify
	$\left(\mathrm{KMM}_{0,4}\right)$, and derive the BBF criterion for the
	Moishezonness of hyperfujiki $4$-folds and, under
	$\left(\mathrm{KMM}_{0,2n}\right)$, of hyperfujiki $2n$-folds.
	Finally, in Section \ref{sec-main-hf}, we combine these criteria with period
	theory to describe the Moishezon locus in certain smooth families in terms
	of the Hodge loci of BBF-positive integral classes.

	\section{Preliminaries}\label{sec-preli}
	
	Unless otherwise stated, throughout this paper: we always work in the complex analytic category;  we use the notation $q_{\bullet}$ to denote the BBF (Beauville--Bogomolov--Fujiki) form of  $\bullet$ when it is well-defined (e.g., Definition \ref{def-bbf-fourfold}); When we use the term ``nef", we always mean nefness in the analytic sense (e.g., \cite[Definition 2.1-(2)]{DHP24}); we use $\omega_{\bullet}$ and $K_{\bullet}$ interchangeably to denote the canonical sheaf of a normal variety $\bullet$; By a \emph{ complex analytic variety (or analytic variety, or simply variety)}, we mean a reduced and irreducible complex analytic space;
	If $V$ is a normal variety and
	$\mathscr{F}$ is a  coherent sheaf on $V$, \emph{$\mathscr{F}^{[m]}$} is defined to be  $\bigl(\mathscr{F}^{\otimes m}\big)^{**}$, where $(\bullet)^{**}$ is the double dual of $\bullet$; By a \emph{resolution} of a variety $W$, we mean a proper modification (e.g., \cite[\S 2]{Ue75}) from a complex manifold to $W$.

	\subsection{The Fujiki class and Moishezon manifolds}

	\begin{definition}[{\cite[\S 4.3]{Fu78/79}, \cite[Chapter IV \S 3]{Vr89}}]\label{def-fujiki-c}
		Let $X$ be a reduced compact complex analytic space. We say that $X$ is \emph{in Fujiki class $\mathscr{C}$}, and write $X\in\mathscr{C}$, if there exist a reduced compact K\"ahler space $\widetilde{X}$ and a surjective meromorphic map
		$\widetilde{X}\dashrightarrow X$.  Note that  $X\in\mathscr{C}$ iff $X$ is bimeromorphic to a compact K\"ahler manifold. We will use the terms \emph{Fujiki variety} and \emph{space in Fujiki class \(\mathscr C\)}
		interchangeably; when \(X\) is smooth, we will also use the term \emph{Fujiki manifold}.
	\end{definition}

	In this paper, we use the terms in Fujiki class $\mathscr{C}$ and Fujiki variety interchangeably to denote a variety belonging to Fujiki class $\mathscr{C}$.
	
	\begin{definition}
		A compact connected complex manifold $X$ is called  \emph{a Moishezon manifold} if it possesses $\operatorname{dim}_{\mathbb{C}} X$ algebraically independent meromorphic functions. Equivalently, $X$ is Moishezon if and only if there exists a projective  manifold $Y$ that is bimeromorphic to $X$. 
	\end{definition}

	\subsection{$\mathbb Q$-factoriality and terminal singularities in the complex analytic setting}\label{sec-prelim-singu}
	
	We refer the reader to \cite[\S 1]{St88}, \cite[\S 3]{Fj22} and \cite[Definition 2.7]{DHP24}   for explanations of the related notions in the complex analytic setting, and to \cite[\S  2.1, \S 2.3, \S 5.1, \S 5.2]{KM98}, \cite[Chapter 4, \S 4.1]{Mt02}  and \cite[\S 2.1]{KK13} for systematic treatments in the algebraic setting (also carry over to  complex analytic spaces). One key difference between the algebraic and analytic settings is that, in the analytic setting, a canonical divisor need not exist.

	\begin{definition}[{e.g., \cite[Definition 2.7]{DHP24}}]
		Let  $X$ be a normal variety.     We say $X$ is \emph{$\mathbb{Q}$-factorial}, if every prime Weil divisor $D$ on $X$ is $\mathbb{Q}$-Cartier and there is a positive integer $m>0$ such that the \emph{(reflexive) $m$-canonical sheaf $\omega_X^{[m]}:=\left(\omega_X^{\otimes m}\right)^{* *}$} is invertible. 
	\end{definition}

	\begin{definition}[{e.g.,  \cite[Definition 2.8]{CH24}}]\label{def-strong-Q-fac}
		Let  $V$ be a normal variety.     We say that $V$ is \emph{strongly $\mathbb{Q}$-factorial},  if for every rank  $1$ reflexive
		sheaf $\mathscr{F}$ on $V$, there exists $m>0$ such that
		$\mathscr{F}^{[m]}$ is locally free.  Note that in \cite[Definition 2.14]{BL22},  the notion of strongly $\mathbb{Q}$-factoriality is called $\mathbb{Q}$-factoriality. 
	\end{definition}
	
	We now give a formulation of terminal singularities equivalent to \cite[Definition 1]{St88}, via discrepancies.
	
	\begin{definition}\label{def-terminal-analytic}
		Let $X$ be a normal variety. 
		We say that \emph{$X$ has only terminal singularities} if the following conditions are
		satisfied:
		\begin{enumerate}[\rm{(}1\rm{)}]
			\item There exists an integer $m>0$ such that  $\omega_X^{[m]}$
			is an invertible sheaf.
			
			\item There exists a resolution $\pi:Y\to X$
			such that the following isomorphism holds:
			\[
			\left(\omega_Y\right)^{\otimes m}
			\cong
			\pi^*\omega_X^{[m]}
			\otimes
			\mathcal O_Y\left(\sum_i ma_iE_i\right),
			\]
			where the $E_i$ is the prime divisor   contained in the exceptional
			locus   of $\pi$ and   $ a_i>0$ and  $ma_i\in\mathbb Z$ for each $i$. Note that, as in 
			\cite[Definition 2.4]{KK13} (also carry over to complex analytic spaces, as pointed out in \cite[\S 2.1]{KK13}), if $E\subseteq Y$ denotes the exceptional
			locus of $\pi$, then the natural isomorphism (induced by the biholomorphism
			$\pi|_{Y\setminus E}:Y\setminus E\to X\setminus\pi(E)$)
			\[
			\left.\left(\omega_Y\right)^{\otimes m}\right|_{Y\setminus E}
			\cong
			\left.\pi^*\omega_X^{[m]}\right|_{Y\setminus E}
			\]
			is required to
			extend to the isomorphism above, where
			$\mathcal O_Y(\sum_i ma_iE_i)|_{Y\setminus E}$ is naturally identified
			with $\mathcal O_{Y\setminus E}$.
		\end{enumerate}
	\end{definition}

	As in the scheme setting, terminality is independent of the choice of resolution. Moreover, terminal singularities are rational: if a normal variety $X$ has only terminal singularities, then, for any resolution $\pi:Y\to X$, one has
	\[
	R^i\pi_*\mathcal O_Y=0
	\qquad
	\text{for each }i>0
	\]
	(and $\pi_*\mathcal O_Y=\mathcal O_X$).

	\subsection{$\partial\bar\partial$-complex  symplectic manifolds}\label{sec-prelim-ddbar}
	
	\begin{definition}[{\cite[Definition 1.1]{ACRT18}}]\label{def-ddbar-symplectic}
		A \emph{$\partial\bar\partial$-complex symplectic manifold} is a pair
		$(X,\sigma)$, where $X$ is a compact complex manifold satisfying the
		$\partial\bar\partial$-lemma and
		$\sigma\in H^0(X,\Omega_X^2)$ is a  holomorphic $2$-form (the $\partial\bar\partial$-lemma implies that $\sigma$ is automatically $d$-closed, e.g., \cite[Footnote 5]{C25})
		which is everywhere non-degenerate.
		We say that $X$ is \emph{simple} if $\sigma$ is unique up to scalars, that is,
		\[
		H^0(X,\Omega_X^2)=\mathbb C\sigma,
		\]
		or equivalently, $h^{2,0}(X)=1$.
	\end{definition}

	For $\partial\bar\partial$-complex symplectic manifolds, there is a rich theory of BBF forms and period maps. We refer the reader to \cite{ACRT18,CT18} for further developments.

	In the present paper, we mainly study a special class of simple
	$\partial\bar\partial$-complex symplectic manifolds, namely hyperfujiki
	manifolds, which may be viewed as natural bimeromorphic analogues of
	hyperk\"ahler manifolds.
	
	\begin{definition}\label{def-hyperfujiki-hyperka}
		Let $(X,\sigma_X)$ be a compact complex manifold endowed with an everywhere
		non-degenerate holomorphic two-form $\sigma_X$.   We say that $(X,\sigma_X)$ is
		\emph{hyperfujiki} (e.g., \cite[Corollary 4.11]{C25}), respectively \emph{hyperk\"ahler}, if $X$ is simply connected,
		$X$ belongs to Fujiki class $\mathscr C$, respectively $X$ is K\"ahler, and
		\[
		H^0(X,\Omega_X^2)=\mathbb C\sigma_X.
		\]
		In both cases, $X$ satisfies the $\partial\bar\partial$-lemma and thus $\sigma_X$ is automatically $d$-closed. 
		Moreover, since
		$\sigma_X$ is everywhere non-degenerate, $X$ is even-dimensional; writing
		$\dim X=2n$, the form $\sigma_X^n$ trivializes $K_X$. Thus
		$K_X\cong\mathcal O_X$.
	\end{definition}

	\begin{remark}\label{remark-hyfu-nothyperkah}
		There exist hyperfujiki manifolds which are not hyperk\"ahler.  Indeed, let
		$X$ be a  hyperk\"ahler $2n$-fold containing a Lagrangian projective space $P\cong\mathbb P^n$, where $n\geq 2$.  Then its Mukai
		flop $X^{\prime}$ is complex symplectic and belongs to Fujiki class
		$\mathscr C$, and suitable choices of $X$ yield a non-K\"ahler
		$X^{\prime}$  (\cite[\S 4.4, Proposition 4.18]{Y01}, \cite[Examples 21.7 and 21.9]{GHJ02}, \cite[Example 5.1]{ACRT18}). 
	\end{remark}
	
	Note that every integral $(1,1)$-class on a hyperfujiki manifold  $Y$ is the first Chern class of a
	holomorphic line bundle on $Y$. Indeed, the  simply connectedness and the $\partial\bar\partial$-lemma give that $H^1(Y,\mathcal{O}_Y)=H^{0,1}(Y)=0$.  Then the long exact sequence associated with 
	the exponential sequence
	gives   an isomorphism
	\begin{equation}\label{iso-pica}
		c_1:\operatorname{Pic}(Y)\xrightarrow{\cong}H^2(Y,\mathbb{Z})\cap H^{1,1}(Y).
	\end{equation}
	Furthermore, for a hyperfujiki 
	manifold $X$, the simply connectedness and the universal coefficient theorem
	show that $H^2\left(X, \mathbb{Z}\right) \cong \operatorname{Hom}_{\mathbb{Z}}\left(H_2\left(X, \mathbb{Z}\right), \mathbb{Z}\right)$. Thus  $H^2\left(X, \mathbb{Z}\right)$ has no torsion element.

	\subsection{Primitive symplectic varieties and BBF forms}
	Although the main objects of this paper are hyperfujiki and hyperk\"ahler manifolds,
	the theory of (singular) symplectic varieties by  Bakker--Lehn \cite{BL22}, plays an essential role in our arguments. We now
	introduce several notions that will be used later.
	
	\begin{definition}[{\cite[Definition 3.1]{BL22}, \cite[Definition 1.1]{Bea00}}]\label{def-symp-varie}
		A \emph{symplectic variety} is a pair $(X, \sigma)$  consisting of a  normal variety $X$ and a closed holomorphic \emph{symplectic} (i.e., non-degenerate everywhere) form $\sigma \in H^0\left(X_{\text {reg }}, \Omega_X^2\right)$ on $X_{\text {reg }}$ such that there is a resolution   of singularities $\pi: \widetilde{X} \to X$ for which $\pi^* \sigma$ extends to a holomorphic form on $\widetilde{X}$. A \emph{ primitive symplectic variety} is a normal compact K\"ahler variety $X$ such that $H^1\left(X, \mathcal{O}_X\right)=0$ and $H^0\left(X_{\text {reg }}, \Omega_X^2\right)=\mathbb{C} \sigma$ such that $(X, \sigma)$ is a symplectic variety.   Note that a symplectic variety has rational singularities (e.g., \cite[Theorem 3.4]{BL22}).
	\end{definition}

	\begin{remark}\label{rem-singular-hodge-and-type}
		Note that if a variety \(X\) has rational singularities and admits a resolution by a
		manifold in Fujiki class \(\mathscr C\), then (the torsion free part of) \(H^2(X,\mathbb Z)\)
		carries a pure weight $2$ Hodge structure (\cite[Introduction and Lemma 2.1]{BL22}).
		Moreover, if \(X\) is  normal and
		\(j:  X_{\rm reg}\hookrightarrow X\) denotes the inclusion, then $(\Omega_X^{p})^{**}=j_*\Omega^p_{X_{\rm reg}}.$
		If \(X\) is further assumed to be  in Fujiki class \(\mathscr C\), then for \(p+q\leq 2\)
		the relevant graded pieces of the Hodge filtration are identified with
		\(H^q(X,(\Omega_X^{p})^{**})\)  (\cite[Paragraph following Definition 2.2]{BL22}).  In particular, taking \(p=2\) and
		\(q=0\), we have
		\[
		H^0(X_{\rm reg},\Omega^2_{X_{\rm reg}})
		=
		H^0(X,j_*\Omega^2_{X_{\rm reg}})
		=
		H^0(X,(\Omega_X^{2})^{**})
		\cong
		\operatorname{Gr}^2_F H^2(X,\mathbb C).
		\]
		Since \(H^2(X,\mathbb C)\) is pure of weight \(2\), the last term is \(H^{2,0}(X)\).
		Hence the symplectic form on \(X_{\rm reg}\) determines uniquely a class
		\[
		\sigma_X\in H^{2,0}(X).
		\]
	\end{remark}
	
	\begin{definition}[{\cite[Definition 5.2]{BL22}}]\label{def-bbf-fourfold}
		Let $Z$ be a compact complex variety in Fujiki class $\mathscr C$ (Definition \ref{def-fujiki-c}) with
		rational singularities and $\dim Z=2n$. Let
		\[
		\sigma\in H^{2,0}(Z)
		\]
		be the cohomology class associated with a holomorphic two-form on $Z_{\rm reg}$.
		The \emph{associated quadratic form}
		\[
		q_{Z,\sigma}:  H^2(Z,\mathbb C)\longrightarrow \mathbb C
		\]
		is defined by
		\begin{equation}\label{eq-bbf-arbitrary-form}
			q_{Z,\sigma}(\alpha)
			:=
			\frac{n}{2}\int_Z(\sigma\overline{\sigma})^{n-1}\alpha^2
			+
			(1-n)
			\left(\int_Z \sigma^n\overline{\sigma}^{\,n-1}\alpha\right)
			\left(\int_Z \sigma^{n-1}\overline{\sigma}^{\,n}\alpha\right),
			\qquad
			\alpha\in H^2(Z,\mathbb C).
		\end{equation}
		Here $\int_Z$ denotes the cap product with the fundamental class $[Z]$.
		In particular, if $\alpha\in H^{1,1}(Z)$, then the two  factors in the
		second term vanish for Hodge-theoretic type reasons
		(see Remark \ref{rem-singular-hodge-and-type} for the Hodge structure), and hence
		\begin{equation}\label{eq:bbf-arbitrary-11}
			q_{Z,\sigma}(\alpha)
			=
			\frac{n}{2}\int_Z(\sigma\overline{\sigma})^{n-1}\alpha^2.
		\end{equation}
		Clearly,  $q_{Z,\sigma}$ is defined over $\mathbb{R}$, i.e., $q_{Z,\sigma}(\alpha)\in \mathbb{R}$ for any real class $\alpha\in H^2(Z,\mathbb R)$.
		If $(Z,\sigma_Z)$ is further assumed to be a $2n$-dimensional
		hyperfujiki manifold
		(Definition \ref{def-hyperfujiki-hyperka}), then, after normalizing
		$\sigma_Z$ by
		\[
		\int_Z(\sigma_Z\overline{\sigma_Z})^n=1,
		\]
		we fix a positive constant $c_Z>0$ and define the
		\emph{Beauville--Bogomolov--Fujiki (BBF) form} on $Z$ by
		\begin{equation}\label{qz-cz}
			q_Z:=c_Zq_{Z,\sigma_Z}.
		\end{equation}
		Under the above normalization, $q_{Z,\sigma_Z}$ coincides with the
		quadratic form in \cite[Definition 4.2]{ACRT18}.  The choice of $c_Z$ does not affect
		the  sign  of a BBF form in the present paper.
		\footnote{\label{multi-reason}
			For a primitive symplectic variety $X$, the corresponding constant
			$c_X>0$ may be chosen so that $c_Xq_{X,\sigma_X}$ takes integral values
			on $H^2(X,\mathbb Z)$ (\cite[Lemma 5.7 and Remark 5.8]{BL22}).}

	\end{definition}
	
	\begin{definition}[{\cite[Definitions 5.2 and 5.4; paragraph following Definition 5.2]{BL22}}]\label{def-bbf-primitive-fourfold}
		Let $X$ be a primitive symplectic variety
		(Definition \ref{def-symp-varie}) of dimension $2n$, and let
		\[
		\sigma_X\in H^{2,0}(X)
		\]
		be the cohomology class (see Remark \ref{rem-singular-hodge-and-type}) of its symplectic form on $X_{\rm reg}$.
		
		The \emph{Beauville--Bogomolov--Fujiki (BBF) form}  on $X$
		can be described in the following two equivalent ways.
		
		First, one may define it directly (intrinsically) on $X$. After normalizing
		$\sigma_X$ by
		\[
		\int_X(\sigma_X\overline{\sigma_X})^n=1,
		\]
		the \emph{BBF form  on $X$} is (up to scaling for the  reason in  Footnote  \ref{multi-reason}) defined to be  
		\[
		q_X=c_X q_{X,\sigma_X}
		\]
		for some positive constant $c_X>0$.

		Second, one may define it by passing to a resolution. Let
		\[
		\pi:  \widetilde X\longrightarrow X
		\]
		be a resolution of singularities, and let $\widetilde\sigma_X$ be the
		holomorphic extension to $\widetilde X$ of
		$\pi^*(\sigma_X|_{X_{\rm reg}})$. One first defines
		$q_{\widetilde X,\widetilde\sigma_X}$ on $\widetilde X$ by the same formula
		as in \eqref{eq-bbf-arbitrary-form}, with $\sigma$ replaced by
		$\widetilde\sigma_X$, and then restricts it to
		\[
		H^2(X,\mathbb Q)\subset H^2(\widetilde X,\mathbb Q)
		\]
		(see \cite[Lemma 2.1]{BL22} for the inclusion relation, which is induced by
		$\pi^*$). This agrees with the intrinsic definition above: the restriction of $q_{\widetilde X,\widetilde\sigma_X}$ to $\pi^*H^2(X,\mathbb Q)$ agrees with the unscaled intrinsic form $q_{X,\sigma_X}$ (\eqref{eq-bbf-arbitrary-form}); after multiplying both sides by the same positive constant $c_X$ (\eqref{qz-cz}), one obtains the  $q_X$.
		
		Consequently, for every $(1,1)$-class $\alpha\in H^{1,1}(X)$, we have
		\begin{equation}\label{eq-bbf-primitive-arbitrary-11}
			q_X(\alpha)
			=
			c_X\frac{n}{2}
			\int_X\alpha^2(\sigma_X\overline{\sigma_X})^{n-1},
		\end{equation}
		which is also equal to
		\begin{equation}\label{eq-bbf-primitive-arbitrary-resolution}
			q_X(\alpha)
			=
			c_X\frac{n}{2}
			\int_{\widetilde X}(\pi^*\alpha)^2
			(\widetilde\sigma_X\overline{\widetilde\sigma_X})^{n-1}.
		\end{equation}
	\end{definition}

	Note that the BBF square is positive on K\"ahler classes (e.g., \cite[\S 1.10]{Huy99}, \cite[Definition 6.7]{BL22}) and therefore
	on the first Chern class of any ample line bundle. For a big line bundle,
	however, this need not be the case.
	
	\begin{fact}\label{fact-big-notposi}
		There exist a projective hyperk\"ahler manifold $Y$ and a big line bundle
		$L$ on $Y$ such that
		\[
		q_Y\bigl(c_1(L)\bigr)<0.
		\]
	\end{fact}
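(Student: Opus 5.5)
The plan is to produce an explicit example: a projective hyperk\"ahler surface, i.e.\ a K3 surface $Y$ ($n=1$). There the BBF form is, up to a positive constant, the intersection form, and $q_Y(c_1(L))$ is a positive multiple of $(L^2)$. To see this, formula \eqref{eq:bbf-arbitrary-11} with $n=1$ gives $q_Y(\alpha)=c_Y\tfrac12\int_Y\alpha^2$, since $(\sigma\overline\sigma)^{0}=1$. So it suffices to find a big line bundle $L$ on a projective K3 surface with $L^2<0$. If one prefers $\dim Y\ge 4$, I would afterwards pass to $Y^{[n]}$ (see the last paragraph).

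For the K3 construction, I take a projective K3 surface $Y$ containing a smooth rational curve $C$, so $C^2=-2$, together with an ample line bundle $H$. An example is a smooth quartic containing a line, or a K3 surface of Picard rank $2$ with lattice $\begin{pmatrix}2d & a\\ a & -2\end{pmatrix}$. For $m\gg0$ I set
\[
L:=H^{\otimes m}\otimes\mathcal O_Y(kC).
\]
This is big because it is the sum of a big divisor and an effective divisor: $h^0(L^{\otimes \ell})\ge h^0(H^{\otimes m\ell})$, which grows quadratically. Its square is
\[
L^2=m^2H^2+2mk(H\cdot C)-2k^2,
\]
and this becomes negative once $k$ is large relative to $m$, for instance $k>m\bigl((H\cdot C)+\sqrt{(H\cdot C)^2+H^2}\bigr)/2$. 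Hence $q_Y(c_1(L))<0$ while $L$ stays big.

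For higher dimension I would take $Y^{[n]}$ and use the standard inclusion $H^2(Y,\mathbb Z)\hookrightarrow H^2(Y^{[n]},\mathbb Z)$, which is compatible with BBF forms (Beauville). The line bundle $L_n$ induced from $L$ by the symmetric power is the pullback of a big bundle under the birational morphism $Y^{[n]}\to Y^{(n)}$, so it is big; moreover $q_{Y^{[n]}}(c_1(L_n))=q_Y(c_1(L))<0$ after normalization.

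The step needing the most care is checking that bigness survives this passage. That holds because a big divisor on $Y$ induces a big divisor on $Y^{(n)}$, namely the symmetrized sum of the pullbacks, and bigness is preserved under pullback by a birational morphism. The remaining step is identifying the normalization so that the sign of $q$ is preserved. By the paper's remark, the choice of $c_Z$ never affects signs, so that is harmless.
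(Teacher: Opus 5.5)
Your proposal is correct and is essentially the paper's argument: the paper also adds a multiple of a $(-2)$-curve to an ample class on a projective K3 surface, concretely the Fermat quartic with a line $C$ and $L=\mathcal O_Y(H+3C)$, giving $q_Y(c_1(L))=(H+3C)^2=4+6-18=-8$; your passage to $Y^{[n]}$ is an optional extra that the statement does not need. One small slip: solving $m^2H^2+2mk(H\cdot C)-2k^2<0$ gives the threshold $k>m\bigl((H\cdot C)+\sqrt{(H\cdot C)^2+2H^2}\bigr)/2$, with $2H^2$ rather than $H^2$ under the root; for example, on the quartic with $m=1$ and $k=2$ your inequality holds but $L^2=0$, which is not negative.
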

	
	One way to produce examples as in the above fact is as follows.  Let $Y$
	be a projective hyperk\"ahler manifold, let $L_0$ be a big line bundle on
	$Y$, put $\alpha:=c_1(L_0)$, and suppose that $E$ is an effective divisor
	such that $q_Y([E])<0$, where $[E]:=c_1\bigl(\mathcal O_Y(E)\bigr)$. Let
	$B_{q_Y}$ denote the  symmetric bilinear polarization of $q_Y$.
	For each integer $m\geq 0$, the line bundle
	\[
	L_m:=L_0\otimes\mathcal O_Y(mE)
	\]
	is still big and satisfies $c_1(L_m)=\alpha+m[E]$, whereas
	\[
	q_Y\bigl(c_1(L_m)\bigr)
	=
	q_Y(\alpha)
	+2mB_{q_Y}(\alpha,[E])
	+m^2q_Y([E]).
	\]
	Since $q_Y([E])<0$, the quadratic term dominates the remaining terms for
	all sufficiently large integers $m$. Thus, on any projective
	hyperk\"ahler manifold carrying such an effective divisor, the first
	Chern class of a big line bundle can have negative BBF square.
	
	We now give a precise example. Let
	$Y\subset\mathbb P^3$ be the Fermat quartic (\cite[\S 3, pp. 1942--1943]{SSvL10}), let $H$ be its hyperplane
	class, and let $C\subset Y$ be a line. The surface $Y$ is a smooth K3
	surface, and
	\[
	L:=\mathcal O_Y(H+3C)
	\]
	is big because $H$ is ample and $3C$ is effective. Since $H^2=4$,
	$H\cdot C=1$, and $C^2=-2$, where the last equality follows from
	\cite[Chapter 2, \S 1.3, (1.4)]{Huy16}, we obtain, with the standard
	normalization for which the BBF form of a K3 surface is its intersection
	form,
	\[
	q_Y\bigl(c_1(L)\bigr)
	=
	(H+3C)^2
	=
	4+6-18
	=
	-8.
	\]

	\section{Moishezonness implies BBF positivity}
	\label{sec-arb-dim-moishezon-positive}

	\subsection{Small primitive symplectic models}
	
	Note that a Fujiki manifold with vanishing real first Chern class admits
	a suitable terminal model,  provided it is either Moishezon or of dimension at most $4$.
	
	\begin{lemma}\label{lem-sqf-model}
		Let $Y$ be a Fujiki manifold of dimension $d$ such that
		$c_1(Y)=0\in H^2(Y,\mathbb R)$.
		Assume that one of the following conditions holds:
		\begin{enumerate}[\rm{(}1\rm{)}]
			\item $Y$ is Moishezon;
			\item $\dim Y\leq 4$.
		\end{enumerate}
		Then there exist a normal compact K\"ahler strongly
		$\mathbb Q$-factorial $d$-fold $Y_{\rm{sqf}}$ with terminal singularities
		and a bimeromorphic map
		\[
		\psi:Y\dashrightarrow Y_{\rm{sqf}}
		\]
		such that $\psi$ is an isomorphism in codimension one.
	\end{lemma}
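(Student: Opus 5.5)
The plan follows the strategy announced in the introduction: first run a minimal model program on a Kähler (or projective) model of $Y$, then remove any extracted divisors by a $\mathbb Q$-factorial terminalization argument (\cite[Proof of Theorem 3.8, Step 1]{BCDG25} together with \cite[Lemma 2.11]{CH24}).

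\textbf{Step 1: a Kähler model with the same canonical class.} Since $Y\in\mathscr C$, I choose a resolution of indeterminacy $\mu:\widetilde Y\to Y$ with $\widetilde Y$ compact Kähler. In case (1) I choose $\widetilde Y$ projective instead, which is possible because $Y$ is Moishezon. Because $c_1(Y)=0$ in $H^2(Y,\mathbb R)$, we have
\[
K_{\widetilde Y}\equiv\sum a_iE_i,\qquad a_i>0 ,
\]
where the $E_i$ are the $\mu$-exceptional prime divisors. In particular $\widetilde Y$ is not uniruled.

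\textbf{Step 2: run the MMP.} In case (1) I run a $K_{\widetilde Y}$-MMP with scaling, using BCHM applied to the projective manifold $\widetilde Y$. In case (2) I use the Kähler MMP in dimension $\le 4$: Höring--Peternell in dimension $3$ and Das--Hacon--P\u{a}un \cite{DHP24} in dimension $4$. Both programs terminate, with a minimal model $Y'$ that is strongly $\mathbb Q$-factorial, terminal, and satisfies $K_{Y'}$ nef. In the projective case strong $\mathbb Q$-factoriality follows as in \cite[Lemma 2.11]{CH24}. The negativity lemma then shows that every $E_i$ with $a_i>0$ is contracted: $K_{Y'}$ is nef while its pullback differs from $K_{\widetilde Y}\equiv\sum a_iE_i$ by effective exceptional divisors. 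Hence $K_{Y'}\equiv0$. Conversely, no divisor of $Y$ is contracted, since an MMP step only contracts divisors lying in the stable base locus of $K$, which is contained in $\operatorname{Supp}\sum a_iE_i$. So the map $Y\dashrightarrow Y'$ contracts no divisor of $Y$.

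\textbf{Step 3: extract back the divisors of $Y'$ not coming from $Y$.} The map $Y\dashrightarrow Y'$ need not yet be an isomorphism in codimension one, because $Y'$ may contain divisors that are exceptional over $Y$. However, $Y$ is smooth with $K_Y\equiv0$, so every exceptional divisor over $Y$ has discrepancy $\ge1$ with respect to $Y$. Since $K_{Y'}\equiv0$ and $K_Y\equiv0$, the two varieties are crepant-equivalent, so discrepancies computed on $Y$ and on $Y'$ agree. A divisor exceptional over $Y$ that appears on $Y'$ would then have discrepancy $0$ with respect to $Y'$, which is impossible. This shows $Y\dashrightarrow Y'$ is an isomorphism in codimension one, and I set $Y_{\rm sqf}:=Y'$.

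\textbf{Remaining points and main obstacle.} Kählerness of $Y'$ is automatic in case (2) from the Kähler MMP. In case (1) it follows since $Y'$ is projective, being the output of BCHM. The main obstacle is making Step 2 rigorous in the analytic Kähler setting: one must ensure the MMP is available and terminates (finite dimension $\le4$ results), and that \emph{strong} $\mathbb Q$-factoriality, rather than only $\mathbb Q$-factoriality, is preserved. For the latter I would rely on \cite[Lemma 2.11]{CH24}, together with the fact that rational singularities and $H^1(\mathcal O)=0$ upgrade $\mathbb Q$-factoriality to strong $\mathbb Q$-factoriality.
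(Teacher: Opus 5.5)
Your Steps 1--2 are essentially the argument the paper imports from \cite[Proof of Theorem 3.8, Step 1]{BCDG25}: an MMP on a K\"ahler or projective resolution, followed by the negativity-lemma comparison of Claim~\ref{claim-small-BCDG}. There is, however, a genuine gap at strong $\mathbb Q$-factoriality in case (2), which is where the lemma goes beyond that citation.

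The K\"ahler MMP results you invoke (H\"oring--Peternell, \cite{DHP24}) use $\mathbb Q$-factoriality in the sense of \cite[Definition 2.7]{DHP24}: global Weil divisors are $\mathbb Q$-Cartier. On a non-Moishezon K\"ahler space, a rank-one reflexive sheaf need not have any nonzero meromorphic section. So this notion says nothing about the sheaves in Definition~\ref{def-strong-Q-fac}.

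Your proposed upgrade does not close this:
\begin{enumerate}
\item $H^1(\mathcal O)=0$ is not a hypothesis of the lemma; a complex torus, or a K3 surface times an elliptic curve, satisfies $c_1=0$ and $\dim\le 4$.
\item Even with $H^1(\mathcal O)=0$, rational singularities do not force rank-one reflexive sheaves to come from Weil divisors. The implication you state is not a known one, and you give no argument for it.
\item \cite[Lemma 2.11]{CH24} does not certify that your $Y'$ is strongly $\mathbb Q$-factorial. It produces a small projective modification $\rho:Y_{\rm sqf}\to Y'$ whose source is strongly $\mathbb Q$-factorial.
\end{enumerate}

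The missing step, which is exactly what the paper does, is to replace $Y'$ by $Y_{\rm sqf}$ and verify three things:
\begin{enumerate}
\item $Y_{\rm sqf}$ is compact K\"ahler, since $\rho$ is projective over a compact K\"ahler space.
\item $\rho^{-1}\circ(Y\dashrightarrow Y')$ is still an isomorphism in codimension one, since $\rho$ is small.
\item $Y_{\rm sqf}$ is terminal. Indeed, $\omega_{Y_{\rm sqf}}^{[r]}\cong\rho^*\omega_{Y'}^{[r]}$, because both are invertible and agree off a codimension-two set. Since $\rho$ is small, a resolution $W\to Y_{\rm sqf}$ and its composite $W\to Y'$ have the same exceptional divisors, hence the same positive discrepancies (Claim~\ref{claim-termi-sin}).
\end{enumerate}
In case (1) there is no such issue: on a projective variety, every rank-one reflexive analytic sheaf is algebraic by GAGA, hence of the form $\mathcal O(D)$.

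Smaller points:
\begin{enumerate}
\item In case (1), BCHM alone does not give a terminating MMP, because $K_{\widetilde Y}$ has numerical dimension $0$ and is not big. You need the results on minimal models in numerical dimension zero (Nakayama, Druel, Gongyo), or simply \cite{BCDG25}.
\item The stable-base-locus argument needs $K_{\widetilde Y}\sim_{\mathbb Q}\sum a_iE_i$, not just numerical equivalence.
\item Step 3 is redundant, and its premise contradicts Step 2. MMP steps never extract divisors, so once every $E_i$ is contracted, no divisor of $Y'$ is exceptional over $Y$.
\end{enumerate}
Both conclusions of Step 2 follow from the negativity lemma alone. Take a K\"ahler common resolution $p:W\to\widetilde Y$, $q:W\to Y'$, and write $p^*K_{\widetilde Y}=q^*K_{Y'}+F$ with $F\ge 0$ and $q$-exceptional. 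Negativity for $\mu\circ p$ gives $F\ge p^*\sum a_iE_i$. The nef class $q^*K_{Y'}$ equals the class of $p^*\sum a_iE_i-F\le 0$, so pairing with a K\"ahler class forces $F=p^*\sum a_iE_i$. Every divisor contracted by the MMP appears in $F$ with positive coefficient. Hence the contracted divisors are exactly the $E_i$, and $K_{Y'}\equiv 0$.
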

	
	\begin{proof}
		By \cite[Proof of Theorem 3.8, Step 1]{BCDG25}, applied to the
		Fujiki manifold $Y$ with $c_1(Y)=0$ and satisfying either $Y$ is Moishezon
		or $\dim Y\leq 4$, there exists a normal compact K\"ahler  $d$-fold
		$Y_{\min}$ with terminal singularities and a bimeromorphic map
		\[
		\tau:Y\dashrightarrow Y_{\min}
		\]
		such that $\tau$ is an isomorphism in codimension one. 
		We may now repeat verbatim the argument following the proof of Claim \ref{claim-small-BCDG} in the proof of Lemma \ref{lem-sqf-model-KMM} to obtain the desired  $Y_{\rm{sqf}}$ (in Lemma \ref{lem-sqf-model-KMM}, the condition $\left(\mathrm{KMM}_{0,2n}\right)$ is only to obtain the $Y_{\min}$, where $d=2n$).
	\end{proof}
	
	Lemma \ref{lem-sqf-model} provides a small strongly $\mathbb Q$-factorial
	terminal K\"ahler model whenever $Y$   is Moishezon or $\dim Y\leq 4$, provided that $Y$ satisfies $c_1(Y)=0\in H^2(Y,\mathbb R)$.
	We now show that, under the additional hyperfujiki assumption, the 
	symplectic form of $Y$ descends to this model and endows it with the structure
	of a primitive symplectic variety.

	\begin{proposition}[Small strongly $\mathbb Q$-factorial primitive symplectic K\"ahler model]
		\label{prop-small-primitive-model}
		Let $(Y,\sigma_Y)$ be a hyperfujiki manifold of dimension $2n$, where
		$n\geq 2$.  Assume further that one of the following conditions holds:
		\begin{enumerate}[\rm{(}1\rm{)}]
			\item $Y$ is Moishezon;
			\item $n=2$.
		\end{enumerate}
		Then there exist a compact K\"ahler
		strongly $\mathbb Q$-factorial normal variety $X$ with only terminal
		singularities,  a small bimeromorphic map
		\[
		\phi:Y\dashrightarrow X,
		\]
		and a reflexive two-form
		\[
		\sigma_X\in
		H^0\left(X,\left(\wedge^2\Omega_X^1\right)^{**}\right)
		\]
		satisfying 
		\begin{equation}
			\label{eq-kmm-form-new}
			\sigma_Y|_{U_Y}
			=
			\phi_U^*\left(\sigma_X|_{U_X}\right), 
		\end{equation} 
		where  $U_Y\subset Y$ and $U_X\subset X$ are 
		the maximal analytic Zariski open subsets on which $\phi$ induces a
		biholomorphism $\phi_U:U_Y\xrightarrow{\cong}U_X.$
		Moreover,
		\[
		\left(X,\sigma_X|_{X_{\rm{reg}}}\right)
		\]
		is a primitive symplectic variety (Definition \ref{def-symp-varie}).

	\end{proposition}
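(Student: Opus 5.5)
We plan to apply Lemma \ref{lem-sqf-model} to $Y$ and then transport the symplectic form. Since $Y$ is hyperfujiki, $K_Y\cong\mathcal O_Y$, so $c_1(Y)=0$ in $H^2(Y,\mathbb R)$. Under either hypothesis (1) or (2) (note $n=2$ gives $\dim Y=4$), Lemma \ref{lem-sqf-model} yields a normal compact K\"ahler strongly $\mathbb Q$-factorial variety $X:=Y_{\rm sqf}$ with terminal singularities, together with a bimeromorphic map $\phi:=\psi:Y\dashrightarrow X$ that is an isomorphism in codimension one. Let $U_Y$ and $U_X$ be the maximal open sets on which $\phi$ is biholomorphic. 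Then $\operatorname{codim}(Y\setminus U_Y)\ge 2$ and $\operatorname{codim}(X\setminus U_X)\ge 2$.

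\textbf{Descending the form.} Define $\sigma_X$ on $U_X$ by $(\phi_U^{-1})^*(\sigma_Y|_{U_Y})$. The sheaf $(\wedge^2\Omega^1_X)^{**}=j_*\Omega^2_{X_{\rm reg}}$ is reflexive and $X\setminus U_X$ has codimension at least two, so $\sigma_X$ extends uniquely to a section in $H^0(X,(\wedge^2\Omega^1_X)^{**})$. Equation \eqref{eq-kmm-form-new} then holds by construction.

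\textbf{Nondegeneracy on $X_{\rm reg}$.} This is the main obstacle: a priori $\sigma_X$ is only known to be symplectic on $U_X\cap X_{\rm reg}$. I would consider $\sigma_X^n\in H^0(X_{\rm reg},\omega_{X_{\rm reg}})$. It is nowhere vanishing on $U_X\cap X_{\rm reg}$, so its zero locus in $X_{\rm reg}$ is either empty or a divisor, and that divisor would lie in $X_{\rm reg}\setminus U_X$, which has codimension at least $2$. Hence $\sigma_X^n$ vanishes nowhere on $X_{\rm reg}$, and $\sigma_X$ is symplectic there. Closedness follows because $d\sigma_X$ vanishes on the dense open set $U_X\cap X_{\rm reg}$.

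\textbf{Extension to a resolution.} Take a resolution $\pi:\widetilde X\to X$. Since $X$ has terminal, hence canonical and klt, singularities, the Kebekus--Schnell extension theorem \cite[Theorem 1.10]{KS21} (or Greb--Kebekus--Kov\'acs--Peternell in the analytic setting) shows that $\pi^*\sigma_X$ extends holomorphically to $\widetilde X$. Thus $(X,\sigma_X|_{X_{\rm reg}})$ is a symplectic variety.

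\textbf{Primitivity.} $X$ is a normal compact K\"ahler variety with rational singularities. Use a common resolution $W$ of $Y$ and $X$, which is a Fujiki manifold. Rationality of the singularities and bimeromorphic invariance of $h^{0,1}$ among smooth Fujiki manifolds give
\[
H^1(X,\mathcal O_X)\cong H^1(W,\mathcal O_W)\cong H^1(Y,\mathcal O_Y)=0 .
\]
For reflexive $2$-forms, we have
\[
H^0(X_{\rm reg},\Omega^2)\cong H^0(U_X\cap X_{\rm reg},\Omega^2)\cong H^0(U_Y',\Omega^2_Y),
\]
where $U_Y'$ has complement of codimension $\ge2$ in $Y$. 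By Hartogs, the last space equals $H^0(Y,\Omega^2_Y)=\mathbb C\sigma_Y$. Hence $H^0(X_{\rm reg},\Omega^2)=\mathbb C\sigma_X$, and $X$ is primitive symplectic.
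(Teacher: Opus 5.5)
Your proposal is correct and follows the paper's proof essentially step by step. The shared steps are the descent of $\sigma_Y$ via reflexivity across the codimension-two set $X\setminus U_X$, the divisorial zero-locus argument for $\sigma_X^n$ on $X_{\rm reg}$, closedness by density of $U_X$, Hartogs for $H^0(X_{\rm reg},\Omega^2)=\mathbb C\sigma_X$, and rationality plus Leray for $H^1(X,\mathcal O_X)=0$.

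The only deviation is the extension step. There you invoke the Kebekus--Schnell extension theorem for rational singularities. The paper avoids that input: on the common resolution $\nu:W\to X$, the pull-back of $\sigma_X|_{X_{\rm reg}}$ agrees with $\mu^*\sigma_Y$ on the dense open set $W_U$, so $\mu^*\sigma_Y$ is itself the required holomorphic extension. This suffices because the definition asks for only one such resolution.
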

	
	\begin{proof}

		Since $(Y,\sigma_Y)$ is hyperfujiki,  $c_1(Y)=0\in H^2(Y,\mathbb R).$
		Since either $Y$ is Moishezon or $n=2$,
		Lemma \ref{lem-sqf-model} applies to $Y$. Thus we obtain a compact
		K\"ahler strongly $\mathbb Q$-factorial normal variety $X$ with only
		terminal singularities and a small bimeromorphic map
		\[
		\phi:Y\dashrightarrow X.
		\]
		We may now repeat verbatim the argument following the first paragraph
		of the proof of Proposition \ref{prop-kmm-model} to complete the proof
		of Proposition \ref{prop-small-primitive-model} (in Proposition \ref{prop-kmm-model}, the condition $\left(\mathrm{KMM}_{0,2n}\right)$ is only to obtain the map $\phi:Y\dashrightarrow X$).

	\end{proof}

	\subsection{Moishezonness implies BBF positivity}
	
	We now give a necessary condition for a hyperfujiki manifold to be Moishezon from the viewpoint of BBF positivity. In particular, it implies that if $q_Y(c_1(L_Y))\leq 0$ for any holomorphic line bundle $L_Y$ on $Y$, then $Y$ is not Moishezon.

	\begin{theorem}\label{thm-arb-positive}
		Let $(Y,\sigma_Y)$ be a hyperfujiki $2n$-fold and $q_Y$ the BBF form. Then
		\[
		Y\text{ is Moishezon}
		\Longrightarrow \text{ there exists a big line bundle } L_Y \text{ such that }
		q_Y(c_1(L_Y))>0.
		\]
	\end{theorem}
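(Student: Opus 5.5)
We apply Proposition \ref{prop-small-primitive-model} (case (1)) to obtain a compact K\"ahler, strongly $\mathbb Q$-factorial, terminal primitive symplectic variety $X$, a small bimeromorphic map $\phi:Y\dashrightarrow X$, and a reflexive form $\sigma_X$ satisfying \eqref{eq-kmm-form-new}. Because $Y$ is Moishezon and $\phi$ is bimeromorphic, $X$ is Moishezon as well. Since $X$ is K\"ahler with rational singularities, Namikawa's criterion \cite[Theorem 1.6]{Nam02} shows that $X$ is projective. We then fix an ample line bundle $L_X$. Its class $c_1(L_X)$ is K\"ahler, so $q_X(c_1(L_X))>0$.

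Next we construct $L_Y$. Choose a resolution $W$ of the graph of $\phi$, with maps $\mu:W\to Y$ and $\nu:W\to X$. We set $L_Y:=(\mu_*\nu^*L_X)^{**}$. Equivalently, $L_Y$ extends $\phi_U^*(L_X|_{U_X})$ across $Y\setminus U_Y$, which has codimension $\ge2$; the extension exists because $Y$ is smooth and reflexive rank one sheaves on it are invertible. Then $H^0(Y,L_Y^{m})\supseteq H^0(X,L_X^{m})$ by Hartogs, since $X\setminus U_X$ also has codimension $\ge2$ and $X$ is normal. Hence $L_Y$ is big. On $W$ we write
\[
\mu^*L_Y=\nu^*L_X\otimes\mathcal O_W(F),
\]
where $F$ is a $\mathbb Z$-divisor supported on the union of the $\mu$-exceptional and $\nu$-exceptional divisors. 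This holds because both sides agree over $\mu^{-1}(U_Y)=\nu^{-1}(U_X)$, and the complement of that open set is a union of divisors that are exceptional for both maps, since $\phi$ is small.

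The comparison is then carried out on $W$. Let $\sigma_W:=\mu^*\sigma_Y$. By \eqref{eq-kmm-form-new} and Kebekus--Schnell functoriality \cite{KS21}, $\sigma_W$ coincides with the holomorphic extension of $\nu^*\sigma_X$; this uses that two holomorphic forms agreeing on a dense open set are equal. The degree-$2n$ pairings pull back, so
\[
q_{Y,\sigma_Y}(c_1L_Y)=\tfrac n2\int_W(\mu^*c_1L_Y)^2(\sigma_W\bar\sigma_W)^{n-1},
\qquad
q_{X,\sigma_X}(c_1L_X)=\tfrac n2\int_W(\nu^*c_1L_X)^2(\sigma_W\bar\sigma_W)^{n-1},
\]
using \eqref{eq-bbf-primitive-arbitrary-resolution}. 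The normalizations $\int(\sigma\bar\sigma)^n$ also agree, so the two forms differ only by the positive constants $c_Y,c_X$. Writing $\mu^*c_1L_Y=\nu^*c_1L_X+[F]$, it therefore suffices to prove
\[
\int_W [E]\cdot\beta\cdot(\sigma_W\bar\sigma_W)^{n-1}=0
\]
for every exceptional prime divisor $E$ and every class $\beta\in H^2(W)$.

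This vanishing is the main obstacle, and it amounts to Claim \ref{claim-boundary-power-vanishing}. The cleanest route is to show $(\sigma_W|_{\widetilde E})^{n-1}=0$ as a form on a resolution $\widetilde E$ of $E$. Then $[E]\cdot\beta\cdot(\sigma_W\bar\sigma_W)^{n-1}$ equals $\int_{\widetilde E}\beta\,(\sigma\bar\sigma)^{n-1}|_{\widetilde E}=0$. To prove it, we take $E$ to be $\mu$-exceptional (the $\nu$-exceptional case is symmetric). Then $\sigma_W|_E$ factors through the pull-back of $\sigma_Y$ to $\mu(E)$, which has dimension at most $2n-2$. The $(n-1)$-st power of the restriction of a symplectic form to such a subvariety need not vanish in general, so more is needed. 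The idea is that $\sigma_W|_E$ also factors, via Kebekus--Schnell pull-back \cite[Theorem 14.1]{KS21} even when $\nu(E)\subset X_{\rm sing}$, through the reflexive form on $\nu(E)$. Since $E$ is not exceptional for both maps simultaneously over $U$, at least one of the two images has dimension $\le 2n-2$. We would then combine the two factorizations, using that the fibres of $E\to\mu(E)$ and $E\to\nu(E)$ are generically transverse, to force the rank of $\sigma_W|_E$ to be at most $2n-4$; this is a Wierzba/Kaledin-type isotropicity argument. If that fails, we would instead argue that $\int_W[E]\beta(\sigma\bar\sigma)^{n-1}$ is independent of $\beta$ modulo classes pulled back from $\mu(E)$, and conclude by a dimension count.
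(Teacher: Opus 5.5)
Your reduction follows the paper's. You construct the small primitive symplectic model via Proposition~\ref{prop-small-primitive-model}, get projectivity of $X$ from Namikawa, and set $L_Y:=(\mu_*\nu^*L_X)^{**}$, which is big. You then write $\mu^*c_1(L_Y)-\nu^*c_1(L_X)$ as a combination of boundary divisors and reduce to $\bigl((\iota_E\circ r)^*\tau\bigr)^{n-1}=0$ for prime divisors $E\subset W\setminus W_U$.

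\textbf{The gap: transversality on $E$ fails in general.} You assert that at a general point of $E$ the kernels of $d(\mu|_E)$ and $d(\nu|_E)$ meet only in $0$. This is false for general $E\subset W$. It does hold when $E$ is not contracted by $\rho:W\to\Gamma$, since then $(\mu,\nu)|_E:E\to Y\times X$ has generic rank $2n-1$. But $W$ is only a resolution of the graph $\Gamma$, which is typically singular over the indeterminacy locus. A divisor $E$ contracted by $\rho$ maps onto $\rho(E)$ of dimension at most $2n-2$, and the positive-dimensional fibres of $E\to\rho(E)$ lie in both kernels. If moreover $\dim\mu(E)=2n-2$, then $\tau|_E$ is just the pull-back of $\sigma_Y|_S$, with $S:=\mu(E)_{\rm red}$, through the generically finite map $\rho(E)\to S$. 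Nothing visible on $E$ forces $(\sigma_Y|_S)^{n-1}=0$. Your fallback (independence of $\beta$ modulo classes from $\mu(E)$ plus a dimension count) is not an argument. Also, the kernel/parity argument needs both kernels to be nonzero. This holds because $\phi$ is small, so every $E\subset W\setminus W_U$ is exceptional for both $\mu$ and $\nu$, not for ``at least one'' of them as you write.

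\textbf{The missing idea: pass to $S$ and to a better divisor over $S$.}
\begin{enumerate}
\item If $\dim S\le 2n-3$, the vanishing is immediate from $\tau^{n-1}=\mu^*\sigma_Y^{n-1}$ for dimension reasons.
\item If $\dim S=2n-2$, one first shows $S\subset\mathcal S_\phi$. Otherwise, over a neighbourhood of a general point of $S$, $\mu$ would be biholomorphic, and $E$ could not map onto a codimension-two set. Hence every fibre of the graph over $S$ is positive dimensional.
\item The paper then takes an irreducible component $F$ of $a^{-1}(S)$ in the normalized graph $\Gamma^\nu$ that dominates $S$. It has dimension $2n-1$, and $b(F)$ has dimension $\le 2n-2$ by smallness. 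On $F$ the map $n_\Gamma=(a,b)$ is finite, hence immersive at general points. This is exactly where the transversality comes from.
\item Running your double factorization (Kebekus--Schnell pull-back for $b$) and the parity argument on $F$ yields $(\sigma_Y|_{S_{\rm reg}})^{n-1}=0$. This is a statement about $S$ alone, so it pulls back to $\widetilde E$ for every $E$ dominating $S$, including the divisors contracted by $\rho$.
\end{enumerate}
The strict transform in $W$ of such an $F$ would serve equally well in place of $F$.

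Finally, Proposition~\ref{prop-small-primitive-model} requires $n\ge 2$, so treat $n=1$ separately: there $Y$ is a K3 surface, projective once Moishezon.
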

	
	\begin{proof}
		\setcounter{proofstep}{0}

		Assume that $Y$ is Moishezon. If $n=1$, then $Y$ is a K\"ahler Moishezon
		surface, hence projective. 
		Choose an ample line bundle $L_Y$ on $Y$. Then $L_Y$ is big.  Since the K\"ahler cone of a hyperk\"ahler manifold is contained in the BBF-positive cone (\cite[\S 1.10]{Huy99}  or \cite[Definition 6.7]{BL22}), we have 
		\[
		q_Y(c_1(L_Y))>0.
		\]
		Thus the theorem holds when $n=1$.  Thus, from now on, we assume that $n\geq 2$.

		The proof is divided into 3 steps. First, we fix the small primitive
		symplectic model and prove the  boundary power vanishing (which is needed in Step 2). Second, we
		compare the BBF forms for the transform of a line bundle from $X$ to $Y$.
		Third, we apply the comparison to an ample line bundle on the projective model
		$X$.

		\begin{step}\label{step 1-thm-arb-positive}
			Vanishing of the $(n-1)$-power of the pulled-back symplectic form along certain boundary divisors
		\end{step}

		Apply Proposition \ref{prop-small-primitive-model} and we obtain a compact
		K\"ahler strongly $\mathbb Q$-factorial primitive symplectic variety $X$ with
		only terminal singularities and a small bimeromorphic map
		\[
		\phi:Y\dashrightarrow X.
		\]
		Furthermore, the symplectic form (Definition \ref{def-symp-varie}) on $X_{\rm reg}$ is 
		$\sigma_X|_{X_{\rm reg}}$, where 
		\[
		\sigma_X\in
		H^0\left(X,\left(\wedge^2\Omega_X^1\right)^{**}\right).
		\]
		
		Choose maximal analytic Zariski open subsets $U_Y\subset Y$ and $U_X\subset X$ such
		that $\phi$ induces an isomorphism
		\[
		\phi_U:U_Y\xrightarrow{\cong}U_X
		\]
		and
		\begin{equation}\label{codim-geq-2-thm}
			\operatorname{codim}_Y(Y\setminus U_Y)\geq 2,
			\qquad
			\operatorname{codim}_X(X\setminus U_X)\geq 2.
		\end{equation}
		Since $U_Y$ is smooth and $U_X\cong U_Y$, we have
		$U_X\subset X_{\rm reg}$.

		Let $\Gamma$ be the graph of $\phi$, endowed with the reduced structure, and
		let
		\[
		p:\Gamma\to Y,
		\qquad
		q:\Gamma\to X
		\]
		be the natural projections. Choose a resolution
		\[
		\rho:W\to\Gamma
		\]
		which is biholomorphic over $\Gamma_{\rm reg}$. Set
		\[
		\mu:=p\circ\rho:W\to Y,
		\qquad
		\nu:=q\circ\rho:W\to X.
		\]
		Thus $W$ is a common resolution of 
		$Y$ and $X$. Define
		\begin{equation}\label{def-Gamma-U-thm}
			\Gamma_U:=p^{-1}(U_Y)=q^{-1}(U_X),
			\qquad
			W^\circ:=\nu^{-1}(X_{\rm reg}),
			\qquad
			W_U:=\rho^{-1}(\Gamma_U)
			=\mu^{-1}(U_Y)
			=\nu^{-1}(U_X).
		\end{equation}
		Then
		\[
		\mu|_{W_U}:W_U\xrightarrow{\cong}U_Y,
		\qquad
		\nu|_{W_U}:W_U\xrightarrow{\cong}U_X.
		\]
		Set
		\begin{equation}\label{def-tau}
			\tau:=\mu^*\sigma_Y\in H^0(W,\Omega_W^2).
		\end{equation}
		By the construction of $\sigma_X$ in Proposition
		\ref{prop-small-primitive-model}, the form $\tau$ is the extension to $W$ of
		\[
		(\nu|_{W^\circ})^*(\sigma_X|_{X_{\rm reg}}).
		\]

		Philosophically motivated by the    coisotropicity-type (\cite[Theorem 1.2]{Wie03}) and isotropicity-type (\cite[Lemma 2.1]{WW03} or  \cite[Chapter 4]{Wie00} or \cite[\S 4]{Kl01}) phenomena, we establish the following claim, which is the key ingredient in comparing the BBF forms of $Y$ and $X$ (see Step 2).
		However, our setting and conclusion
		are substantially different from those of the cited results. In particular, Claim
		\ref{claim-boundary-power-vanishing} is not a direct consequence of these results, and their arguments
		do not seem to adapt directly to the present situation.

		\begin{claim}\label{claim-boundary-power-vanishing}
			Let $E\subset W\setminus W_U$ be a reduced and irreducible  divisor, let
			$\iota_E:E\hookrightarrow W$ be the natural inclusion, and let
			\[
			r:\widetilde E\to E
			\]
			be a resolution. Then
			\[
			\left((\iota_E\circ r)^*\tau\right)^{n-1}=0.
			\]
		\end{claim}
		
		\begin{proof}[Proof of Claim \ref{claim-boundary-power-vanishing}]
			This proof is divided into 3 steps.
			
			\medskip
			
			\noindent\textbf{Step (1).} Reduction to the codimension-$2$ case and construction of $F$
			
			\medskip
			
			We first treat the case in which the reduced image of $E$ under $\mu$ has
			codimension at least $3$ in $Y$, where the required vanishing follows
			directly from the dimension reason. In the remaining codimension-$2$ case, we pass to
			the normalization of the graph of $\phi$ and construct a prime divisor $F$
			dominating this image.
			
			Set $S:=\mu(E)_{\rm red}.$
			Since $\mu|_E$ is proper and $E$ is irreducible, $S$ is an irreducible
			analytic subset of $Y$. Moreover, since $E\subset W\setminus W_U$, we have
			$S\subset Y\setminus U_Y$. By \eqref{codim-geq-2-thm},
			\[
			\dim S\leq 2n-2.
			\]
			
			If $\dim S\leq 2n-3$, then the image of
			\[
			\mu\circ\iota_E\circ r:\widetilde E\to Y
			\]
			has dimension at most $2n-3$. Consequently, the pull-back of the holomorphic
			$(2n-2)$-form $\sigma_Y^{n-1}$ vanishes on a dense open subset of
			$\widetilde E$, and therefore vanishes everywhere. Thus
			\[
			\left((\iota_E\circ r)^*\tau\right)^{n-1}
			=
			(\mu\circ\iota_E\circ r)^*(\sigma_Y^{n-1})
			=0.
			\]
			It remains to treat the case
			\[
			\dim S=2n-2.
			\]
			
			Let
			\[
			n_\Gamma:\Gamma^\nu\to\Gamma
			\]
			be the normalization    of the graph $\Gamma$ of $\phi$. Set
			\[
			a:=p\circ n_\Gamma:\Gamma^\nu\to Y,
			\qquad
			b:=q\circ n_\Gamma:\Gamma^\nu\to X.
			\]
			Since $W$ is normal, the morphism $\rho:W\to\Gamma$ factors through the
			normalization, based on the universality of the normalization. Thus there exists a morphism $\gamma:W\to\Gamma^\nu$ such that
			\[
			\rho=n_\Gamma\circ\gamma,
			\qquad
			\mu=a\circ\gamma,
			\qquad
			\nu=b\circ\gamma.
			\]
			The morphisms introduced above are summarized in the following diagram:
			\begin{equation}\label{diag-res}
				\begin{tikzcd}[column sep=large, row sep=large]
					& W
					\arrow[d, "\gamma"]
					\arrow[dddl, bend right=18, "\mu"']
					\arrow[dddr, bend left=18, "\nu"]
					& \\
					& \Gamma^\nu
					\arrow[d, "n_\Gamma"]
					\arrow[ddl, bend right=12, "a"']
					\arrow[ddr, bend left=12, "b"]
					& \\
					& \Gamma
					\arrow[dl, "p"']
					\arrow[dr, "q"]
					& \\
					Y
					\arrow[rr, dashed, "\phi"']
					& & X .
				\end{tikzcd}
			\end{equation}
			Identifying $\Gamma$ with its image in $Y\times X$, the finite morphism
			$n_\Gamma$ is given by
			\[
			n_\Gamma=(a,b):\Gamma^\nu\to\Gamma\subset Y\times X.
			\]
			Moreover,
			\[
			n_\Gamma^{-1}(\Gamma_U)
			=a^{-1}(U_Y)
			=b^{-1}(U_X).
			\]
			
			We first show that $S$ is contained in the indeterminacy locus
			$\mathcal S_\phi$ of $\phi$. Suppose otherwise, and choose a general point
			\[
			s\in S_{\rm reg}\setminus\mathcal S_\phi.
			\]
			Then $\phi$ is represented by a holomorphic map on a neighbourhood
			$V_s\subset Y$ of $s$, and the graph projection gives an isomorphism
			\[
			p^{-1}(V_s)\xrightarrow{\cong}V_s.
			\]
			In particular, $p^{-1}(V_s)$ is smooth. Since $\rho$ is biholomorphic over
			$\Gamma_{\rm reg}$, the induced map
			\[
			\mu|_{\mu^{-1}(V_s)}:\mu^{-1}(V_s)\xrightarrow{\cong}V_s
			\]
			is biholomorphic. Since $s\in\mu(E)$, the set
			$E\cap\mu^{-1}(V_s)$ is a nonempty divisor in $\mu^{-1}(V_s)$. Its image in
			$V_s$ is therefore a divisor, but this image is contained in $S\cap V_s$,
			which has dimension $2n-2$. This is a contradiction. Thus
			\[
			S\subset\mathcal S_\phi.
			\]
			
			Note that for each $s\in S$, we have
			\[
			\dim p^{-1}(s)>0,
			\]
			and thus it follows from the finiteness of $n_\Gamma$ that 
			\[
			\dim a^{-1}(s)>0
			\]
			for each $s\in S$.
			Indeed, if $p^{-1}(s)$ were zero-dimensional, upper semicontinuity of fiber
			dimension \cite[Theorem 1.19]{PR94} would give a neighbourhood $V_s$ of $s$
			such that $p^{-1}(V_s)\to V_s$ has zero-dimensional fibers. Since $p$ is
			proper, this restriction would be finite. Since it is also a proper modification,  it would be an isomorphism by Zariski's main theorem
			\cite[Theorem 1.11]{Ue75}, and $\phi$ would be holomorphic near $s$, a
			contradiction. 
			
			Take an irreducible component
			\[
			F\subset a^{-1}(S)
			\]
			which dominates $S$ and whose general fiber over $S$ is positive-dimensional. Then
			\[
			\dim F\geq\dim S+1=2n-1.
			\]
			On the other hand, $a^{-1}(S)$ is a proper analytic subset of the
			$2n$-dimensional irreducible  space $\Gamma^\nu$. 
			Consequently,
			\[
			\dim F=2n-1.
			\]
			Since $\Gamma^\nu$ is normal, $F$ is not contained in the singular locus
			$(\Gamma^\nu)_{\rm sing}$ of $\Gamma^\nu$.
			
			Because $F\subset a^{-1}(Y\setminus U_Y)$ and
			$a^{-1}(U_Y)=b^{-1}(U_X)$, we have
			\[
			b(F)\subset X\setminus U_X.
			\]
			Put
			\[
			T:=b(F)_{\rm red}.
			\]
			Then $T$ is an irreducible analytic subset of $X$ and
			\[
			\dim T\leq2n-2.
			\]

			\medskip
			
			\noindent\textbf{Step (2).} The  pull-back formula on $F^\circ$
			
			\medskip
			
			We now compare the pull-back
			of $\sigma_Y$ with the reflexive pull-back of $\sigma_X$ on the regular locus of the normalized graph. After restricting
			to suitable dense open subsets, we obtain two descriptions of
			$\eta|_{F^\circ}$: one as a pull-back from $S^\circ$ via $a^\circ$, and the
			other as a pull-back from $T^\circ$ via $b^\circ$.

			Set  $G:=(\Gamma^\nu)_{\rm reg}$
			and define
			\begin{equation}\label{def-eta}
				\eta:=(a|_G)^*\sigma_Y
				\in H^0(G,\Omega_G^2).
			\end{equation}
			Recall that we have the reflexive two-form
			\[
			\sigma_X\in
			H^0\left(X,\left(\wedge^2\Omega_X^1\right)^{**}\right).
			\]
			Since $X$  has only   terminal singularities,   it has only rational singularities (this  also follows from
			\cite[Theorem 3.4(1)]{BL22}, since $X$ is a symplectic variety). The source $G$  also has only
			rational singularities. Thus the analytic pull-back theorem for reflexive
			differential forms (\cite[Theorems 1.10, 14.1 and Fact 14.3 (14.1.3)]{KS21}) 
			defines \footnote{When $T \nsubseteq X_{\rm {sing }}$, we can also use the usual standard pullback over the corresponding regular part.}
			\[
			\eta_b
			:=
			d_{\rm refl}(b|_G)(\sigma_X)
			\in H^0(G,\Omega_G^2).
			\]
			
			The open subset
			\[
			G_U:=n_\Gamma^{-1}(\Gamma_U)
			\]
			is a dense open subset of $G$, because $\Gamma_U$ is smooth and the
			normalization morphism is an isomorphism over $\Gamma_U$. On $G_U$, the
			compatibility of reflexive pull-back with ordinary pull-back gives
			\[
			\begin{aligned}
				\eta_b|_{G_U}
				&=
				(b|_{G_U})^*(\sigma_X|_{U_X})                       \\
				&=
				(a|_{G_U})^*\left(\phi_U^*(\sigma_X|_{U_X})\right) \\
				&=
				(a|_{G_U})^*(\sigma_Y|_{U_Y})                       \\
				&=
				\eta|_{G_U}.
			\end{aligned}
			\]
			Since $G_U$ is dense in $G$, the identity theorem gives
			\begin{equation}\label{eq-eta-reflexive-pullback}
				\eta
				=
				d_{\rm refl}(b|_G)(\sigma_X)
				\text{ on }G.
			\end{equation}

			Apply the generic submersion theorem
			\cite[Proposition 1.21]{PR94} to $a|_F: F\to S$, $b|_F: F\to T$, and
			$n_\Gamma|_F: F\to n_\Gamma(F)$, where both $F$ and the irreducible analytic subset $n_\Gamma(F)$ of $\Gamma$ are  equipped with the reduced structures. Since $n_\Gamma|_F$ is finite onto its image, it is generally locally biholomorphic and its
			rank at general point is equal to $\dim F$. Thus, after shrinking to a dense open subset, we
			may choose
			\[
			F^\circ\subset F_{\rm reg}\cap G
			\]
			such that
			\[
			S^\circ:=a(F^\circ)\subset S_{\rm reg},
			\qquad
			T^\circ:=b(F^\circ)\subset T_{\rm reg}
			\]
			are dense  open subsets, the induced morphisms
			\[
			a^\circ:=a|_{F^\circ}:F^\circ\to S^\circ,
			\qquad
			b^\circ:=b|_{F^\circ}:F^\circ\to T^\circ
			\]
			are submersions, and
			\[
			d(n_\Gamma|_{F^\circ})
			\]
			is injective.
			
			Let
			\[
			i_S:S^\circ\hookrightarrow Y,
			\qquad
			i_T:T^\circ\hookrightarrow X
			\]
			be the natural locally closed immersions. Since $T^\circ$ is smooth, the
			analytic pull-back theorem (\cite[Theorem 14.1]{KS21}) defines
			\[
			\omega_T
			:=
			d_{\rm refl}i_T(\sigma_X)
			\in H^0(T^\circ,\Omega_{T^\circ}^2).
			\]
			Note that this construction remains valid even when $T\subset X_{\rm sing}$, since the
			reflexive pull-back is  also defined for morphisms whose image is entirely contained
			in the singular locus of the target
			(\cite[Theorem 1.10 and Note 1.6.1]{KS21}).
			
			Let
			\[
			\iota_F:F^\circ\hookrightarrow G
			\]
			be the natural inclusion. By construction, we have
			\begin{equation}\label{factori-1}
				i_S\circ a^\circ=(a|_G)\circ\iota_F   
			\end{equation}
			and
			\begin{equation}\label{factori-2}
				i_T\circ b^\circ=(b|_G)\circ\iota_F.
			\end{equation}

			First, by the definition \eqref{def-eta} of $\eta$ and \eqref{factori-1}, we have
			\[
			\begin{aligned}
				\eta|_{F^\circ}
				&:=
				\iota_F^*
				\left(
				(a|_G)^*\sigma_Y
				\right)                                                      \\
				&=
				(a^\circ)^*
				\left(
				i_S^*\sigma_Y
				\right).                      
			\end{aligned}
			\]
			On the other hand, by \eqref{eq-eta-reflexive-pullback}, the functoriality  (\cite[Theorem 14.1]{KS21}) of the
			reflexive pull-back functor $ d_{\rm refl}$, and \eqref{factori-2}, we obtain
			\[
			\begin{aligned}
				\eta|_{F^\circ}
				&:=
				\iota_F^*\eta                                                \\
				&=
				d_{\rm refl}\iota_F
				\left(
				d_{\rm refl}(b|_G)(\sigma_X)
				\right)                                                      \\
				&=
				d_{\rm refl}
				\left(
				(b|_G)\circ\iota_F
				\right)(\sigma_X)                                            \\
				&=
				d_{\rm refl}
				\left(
				i_T\circ b^\circ
				\right)(\sigma_X)                                            \\
				&=
				d_{\rm refl}b^\circ
				\left(
				d_{\rm refl}i_T(\sigma_X)
				\right)                                                      \\
				&=
				d_{\rm refl}b^\circ(\omega_T)                                \\
				&=
				(b^\circ)^*\omega_T.
			\end{aligned}
			\]
			Here the second equality and the final equality follow from compatibility (\cite[Theorem 14.1 and Fact 14.3 (14.1.3)]{KS21}) with
			ordinary pull-back, since the respective targets $G$ and $T^\circ$ are smooth.  \footnote{Note that if $T\subset X_{\rm sing}$, then $i_T^{-1}(X_{\rm reg})=\varnothing.$
				Accordingly, we cannot  use the regular-locus compatibility diagram of
				\cite[Theorem 14.1]{KS21} for the morphism $i_T$. }
			Consequently,
			\begin{equation}\label{eq-graph-form-arb}
				\eta|_{F^\circ}
				=
				(a^\circ)^*(\sigma_Y|_{S^\circ})
				=
				(b^\circ)^*\omega_T.
			\end{equation}

			\medskip
			
			\noindent\textbf{Step (3).} The kernel estimate and  the vanishing
			
			\medskip

			We now use these two pull-back descriptions obtained in Step (2) to show that
			the relative tangent bundles of $a^\circ$ and $b^\circ$ lie in the kernel of
			$\eta|_{F^\circ}$.
			The resulting kernel estimate forces
			$\left(\eta|_{F^\circ}\right)^{n-1}=0$. This vanishing descends to
			$S_{\rm reg}$ and then pulls back to the desired vanishing on
			$\widetilde E$.

			Set
			\[
			K_a:=T_{F^\circ/S^\circ}:=\ker(d a^\circ),
			\qquad
			K_b:=T_{F^\circ/T^\circ}:=\ker(d b^\circ)
			\]
			where 
			\[
			d a^\circ:
			T_{F^\circ}
			\longrightarrow
			(a^\circ)^*T_{S^\circ},
			\qquad
			d b^\circ:
			T_{F^\circ}
			\longrightarrow
			(b^\circ)^*T_{T^\circ}.
			\]
			Since
			\[
			\dim F^\circ=2n-1,
			\qquad
			\dim S^\circ=2n-2,
			\qquad
			\dim T^\circ\leq2n-2,
			\]
			we have
			\[
			\operatorname{rank}K_a=1,
			\qquad
			\operatorname{rank}K_b\geq1.
			\]
			Moreover, since $n_\Gamma=(a,b)$ and
			$d(n_\Gamma|_{F^\circ})_x$ is injective for each $x\in F^\circ$,  we have
			$$
			\begin{aligned}
				\left(K_a\right)_x \cap\left(K_b\right)_x & =\operatorname{ker} d a_x^{\circ} \cap \operatorname{ker} d b_x^{\circ} \\
				& =\operatorname{ker} d\left(\left. n_{\Gamma}\right|_{F^{\circ}}\right)_x \\
				& =\{0\}
			\end{aligned}
			$$
			for each $x\in F^\circ$.
			By \eqref{eq-graph-form-arb}, both $K_a$ and $K_b$ are contained in the kernel
			of $\eta|_{F^\circ}$, where the two-form $\eta|_{F^\circ}$ is viewed by contraction as a
			morphism $T_{F^\circ}\to\Omega_{F^\circ}^1.$
			Consequently, for each $x\in F^\circ$, we have
			\[
			\dim\ker\left(\eta_x|_{T_xF^\circ}\right)\geq2.
			\]
			Since the rank of a skew-symmetric form is even, and $\dim T_xF^\circ=2n-1$ is
			odd, the dimension of its kernel is odd. Therefore, we have
			\[
			\dim\ker\left(\eta_x|_{T_xF^\circ}\right)\geq3,
			\]
			and thus
			\[
			\operatorname{rank}\left(\eta_x|_{T_xF^\circ}\right)
			\leq 2n-4.
			\]
			It then follows that
			\[
			\left(\eta|_{F^\circ}\right)^{n-1}=0.
			\]

			Using \eqref{eq-graph-form-arb}, we obtain
			\[
			(a^\circ)^*\left((\sigma_Y|_{S^\circ})^{n-1}\right)
			=
			\left(\eta|_{F^\circ}\right)^{n-1}
			=0.
			\]
			Since $a^\circ:F^\circ\to S^\circ$ is a surjective submersion, pull-back of holomorphic
			forms is pointwise injective (e.g., \cite[\S 18, Problem 18.8]{Tu11}). Thus
			\[
			(\sigma_Y|_{S^\circ})^{n-1}=0.
			\]
			Since $S^\circ$ is dense in $S_{\rm reg}$, the identity theorem gives
			\[
			(\sigma_Y|_{S_{\rm reg}})^{n-1}=0.
			\]
			
			Finally, since $E$ dominates $S$, the inverse image of $S_{\rm reg}$ under
			\[
			\mu\circ\iota_E\circ r:\widetilde E\to S
			\]
			is dense in $\widetilde E$. On this dense open subset, we have
			\[
			\left((\iota_E\circ r)^*\tau\right)^{n-1}
			=
			(\mu\circ\iota_E\circ r)^*(\sigma_Y^{n-1})
			=0.
			\]
			The left-hand side is a holomorphic $(2n-2)$-form on $\widetilde E$.
			Therefore it vanishes everywhere. This completes the proof of  Claim \ref{claim-boundary-power-vanishing}.
		\end{proof}

		\begin{step}
			Transformation of line bundles from $X$ to $Y$ and comparison of the BBF forms
		\end{step}

		Let $A\in\operatorname{Pic}(X)$ be a holomorphic line bundle. 
		Define 
		\[
		\mathcal F:=\mu_*\nu^*A,
		\qquad
		L_Y=\mathcal F^{**},
		\]
		where $\mu$ and $\nu$ are the morphisms appearing in
		Diagram \eqref{diag-res}.  Since $Y$ is smooth, the rank-one reflexive sheaf $L_Y$ is a line bundle (e.g., \cite[Chapter II, \S 1.1, Lemma 1.1.15]{OSS11}).
		Consider the  morphism $$\ell:\mu^*\mathcal F\longrightarrow\mu^*L_Y$$  induced by 
		the canonical morphism $\mathcal F\to\mathcal F^{**}$, and   the
		adjunction morphism $$\varepsilon:\mu^*\mathcal F\longrightarrow\nu^*A.$$
		Both morphisms are isomorphisms over $W_U$ (see \eqref{def-Gamma-U-thm}). Hence their kernels and
		cokernels are torsion sheaves supported on $W\setminus W_U$.
		
		Let $\mathcal M_W$ denote the sheaf of germs of meromorphic functions
		on $W$. Since $\mathcal M_W$ is stalkwise the localization of
		$\mathcal O_W$ by its non-zero divisors, tensoring with $\mathcal M_W$
		annihilates torsion and gives isomorphisms of $\mathcal M_W$-module sheaves
		\[
		\ell_{\mathcal M}:
		\mu^*\mathcal F\otimes_{\mathcal O_W}\mathcal M_W
		\xrightarrow{\cong}
		\mu^*L_Y\otimes_{\mathcal O_W}\mathcal M_W
		\]
		and
		\[
		\varepsilon_{\mathcal M}:
		\mu^*\mathcal F\otimes_{\mathcal O_W}\mathcal M_W
		\xrightarrow{\cong}
		\nu^*A\otimes_{\mathcal O_W}\mathcal M_W.
		\]
		Therefore
		\[
		\Phi:=
		\ell_{\mathcal M}\circ\varepsilon_{\mathcal M}^{-1}
		:
		\nu^*A\otimes_{\mathcal O_W}\mathcal M_W
		\xrightarrow{\cong}
		\mu^*L_Y\otimes_{\mathcal O_W}\mathcal M_W
		\]
		is an isomorphism of $\mathcal M_W$-module sheaves.
		
		Set
		\[
		N:=
		\mu^*L_Y\otimes_{\mathcal O_W}(\nu^*A)^*.
		\]
		Since $\nu^*A$  and   $\mu^*L_Y$  are invertible, we have the canonical isomorphism
		\[
		\mathcal{H}om_{\mathcal M_W}
		\left(
		\nu^*A\otimes_{\mathcal O_W}\mathcal M_W,
		\mu^*L_Y\otimes_{\mathcal O_W}\mathcal M_W
		\right)
		\cong
		N\otimes_{\mathcal O_W}\mathcal M_W,
		\]
		and under this  isomorphism, $\Phi$ corresponds to a meromorphic section
		\[
		s\in
		H^0\left(
		W,
		N\otimes_{\mathcal O_W}\mathcal M_W
		\right).
		\]
		Since $\Phi$ restricts to a holomorphic isomorphism over $W_U$, the section
		$s$ is holomorphic and nowhere vanishing on $W_U$.
		Consequently, we have
		\[
		\operatorname{div}(s)=\sum_i a_iE_i,
		\qquad a_i\in\mathbb Z,
		\]
		where the $E_i$ are certain prime divisors contained in
		$W\setminus W_U$.
		
		Put
		\[
		\alpha_X:=c_1(A)\in H^2(X,\mathbb Z)\cap H^{1,1}(X),
		\qquad
		\alpha_Y:=c_1(L_Y)\in H^2(Y,\mathbb Z)\cap H^{1,1}(Y).
		\]
		Then taking first Chern classes  of $N$ gives
		\begin{equation}\label{eq-boundary-difference-X-to-Y}
			\mu^*\alpha_Y-\nu^*\alpha_X
			=
			\sum_i a_i[E_i]
			\quad\text{in}\quad
			H^2(W,\mathbb Z)\cap H^{1,1}(W).
		\end{equation}
		
		By \eqref{eq-boundary-difference-X-to-Y}, we obtain
		\[
		\begin{aligned}
			&
			\int_W
			\left((\mu^*\alpha_Y)^2-(\nu^*\alpha_X)^2\right)
			\wedge\tau^{n-1}\wedge\overline{\tau}^{\,n-1}
			\\
			&=
			\sum_i a_i
			\int_W
			[E_i]\wedge(\mu^*\alpha_Y+\nu^*\alpha_X)
			\wedge\tau^{n-1}\wedge\overline{\tau}^{\,n-1}.
		\end{aligned}
		\]
		Let $B$ be a smooth closed representative of
		$\mu^*\alpha_Y+\nu^*\alpha_X$. For each $i$, let
		\[
		r_i:\widetilde E_i\to E_i
		\]
		be a resolution and let $\iota_{E_i}:E_i\hookrightarrow W$ be the natural
		inclusion. By Claim \ref{claim-boundary-power-vanishing},  we obtain
		\[
		\begin{aligned}
			&
			\int_W
			[E_i]\wedge B\wedge\tau^{n-1}\wedge\overline{\tau}^{\,n-1}
			\\
			&=
			\int_{\widetilde E_i}
			(\iota_{E_i}\circ r_i)^*B
			\wedge
			\left((\iota_{E_i}\circ r_i)^*\tau\right)^{n-1}
			\wedge
			\overline{\left((\iota_{E_i}\circ r_i)^*\tau\right)}^{\,n-1}
			=
			0.
		\end{aligned}
		\]
		Therefore
		\begin{equation}\label{eq-integral-general-step}
			\int_W(\mu^*\alpha_Y)^2\wedge\tau^{n-1}\wedge\overline{\tau}^{\,n-1}
			=
			\int_W(\nu^*\alpha_X)^2\wedge\tau^{n-1}\wedge\overline{\tau}^{\,n-1}.
		\end{equation}
		
		Since $\tau=\mu^*\sigma_Y$, the projection formula for the proper modification
		$\mu:W\to Y$ gives
		\begin{equation}\label{eq-integral-Y-general-step}
			\int_W(\mu^*\alpha_Y)^2\wedge\tau^{n-1}\wedge\overline{\tau}^{\,n-1}
			=
			\int_Y
			\alpha_Y^2\wedge
			\sigma_Y^{n-1}\wedge\overline{\sigma_Y}^{\,n-1}.
		\end{equation}
		On the other hand, $\tau$ is the extension (see \eqref{def-tau}) to $W$ of the pull-back of
		$\sigma_X|_{X_{\rm reg}}$ under $\nu$. Since $\nu:W\to X$ is a resolution and one can calculate the BBF via the resolution (Definition \ref{def-bbf-primitive-fourfold}), \eqref{eq-bbf-primitive-arbitrary-resolution} and
		\eqref{eq-bbf-primitive-arbitrary-11}
		give
		\begin{equation}\label{eq-integral-X-general-step}
			\int_W(\nu^*\alpha_X)^2\wedge\tau^{n-1}\wedge\overline{\tau}^{\,n-1}
			=
			\int_X
			\alpha_X^2\,
			(\sigma_X\overline{\sigma_X})^{n-1}.
		\end{equation}
		Combining \eqref{eq-integral-general-step}, \eqref{eq-integral-Y-general-step},
		and \eqref{eq-integral-X-general-step}, we obtain
		\begin{equation}\label{eq-bbf-arb-step}
			\int_Y
			\alpha_Y^2\wedge
			\sigma_Y^{n-1}\wedge\overline{\sigma_Y}^{\,n-1}
			=
			\int_X
			\alpha_X^2\,
			(\sigma_X\overline{\sigma_X})^{n-1}.
		\end{equation}
		By the Hodge type reason and \eqref{eq-bbf-primitive-arbitrary-11}, 
		we obtain 
		\begin{equation}\label{eq-step2-compare-X-to-Y}
			q_Y(c_1(L_Y))=\lambda q_X(c_1(A)),
			\qquad
			\lambda:=\frac{c_Y}{c_X}>0.
		\end{equation}
		The constant $\lambda$ is independent of $A$.
		
		\begin{step}
			Moishezonness gives a BBF-positive integral  class
		\end{step}

		Since $Y$ is Moishezon and $X$ is bimeromorphic to $Y$, the variety $X$ is also
		Moishezon. Moreover, $X$ is K\"ahler and has only terminal singularities. Hence
		$X$ has rational, in particular $1$-rational, singularities. By Namikawa's
		projectivity criterion for compact K\"ahler Moishezon varieties with
		$1$-rational singularities \cite[Theorem 1.6]{Nam02}, the variety $X$ is
		projective.
		
		Choose an ample line bundle $A$ on the primitive symplectic variety $X$. Since the K\"ahler cone of a primitive
		symplectic variety is contained in the BBF-positive cone
		\cite[Definition 6.7]{BL22}, we have
		\[
		q_X(c_1(A))>0.
		\]
		Define
		\[
		L_Y:=(\mu_*\nu^*A)^{**}.
		\]
		Then $L_Y$ is a holomorphic line bundle on $Y$. 
		Note that the line bundle \(L_Y\) is big. Indeed, set \(M:=\nu^*A\). Since \(Y\) is smooth and \(\mu:  W\to Y\) is a
		modification, there exists an open subset \(U\subset Y\)    such that
		\(\operatorname{codim}_Y(Y\setminus U)\geq 2\) and
		\(\mu^{-1}(U)\to U\) is an isomorphism. By the definition of 
		\(L_Y\), one has
		\[
		M|_{\mu^{-1}(U)}\cong L_Y|_U.
		\]
		Consequently, for any \(m\geq 1\), restriction to \(\mu^{-1}(U)\),
		followed by extension across \(Y\setminus U\), gives an injection
		\[
		H^0\bigl(W,M^{\otimes m}\bigr)
		\hookrightarrow
		H^0\bigl(Y,L_Y^{\otimes m}\bigr).
		\]
		Since \(X\) is normal, \(\nu_*\mathcal O_W=\mathcal O_X\); thus the
		projection formula yields
		\[
		h^0\bigl(Y,L_Y^{\otimes m}\bigr)
		\geq
		h^0\bigl(W,\nu^*(A^{\otimes m})\bigr)
		=
		h^0\bigl(X,A^{\otimes m}\bigr).
		\]
		It then follows from the ampleness of $A$ that \(L_Y\) is big.

		Set
		\[
		v:=c_1(L_Y)\in H^2(Y,\mathbb Z)\cap H^{1,1}(Y).
		\]
		Applying \eqref{eq-step2-compare-X-to-Y} to this ample line bundle $A$, we get
		\[
		q_Y(v)=\lambda q_X(c_1(A)).
		\]
		Since $\lambda>0$ and $q_X(c_1(A))>0$, it follows that
		\[
		q_Y(v)>0.
		\]
		This completes the proof of Theorem \ref{thm-arb-positive}.

	\end{proof}

	\section{BBF positivity implies Moishezonness}\label{sec-posi-mois}
	
	\subsection{The criterion of Moishezonness  via small primitive symplectic models}

	\begin{theorem}\label{thm-positive-moishezon}
		Let $(Y,\sigma_Y)$ be a hyperfujiki $2n$-fold, and let $q_Y$ be its
		BBF form. Assume that there exist a compact K\"ahler strongly
		$\mathbb Q$-factorial  (Definition \ref{def-strong-Q-fac}) normal variety $X$ with only terminal singularities
		and a  small bimeromorphic map $\phi:Y\dashrightarrow X.$ 
		Let $ U_Y\subset Y,
		U_X\subset X$
		be the maximal analytic Zariski open subsets such that $\phi$ restricts
		to a biholomorphic map $\phi_U:=\phi|_{U_Y}:U_Y\xrightarrow{\cong}U_X.$
		Assume further that there exists a reflexive
		two-form
		\[
		\sigma_X
		\in
		H^0\left(X,\left(\wedge^2\Omega_X^1\right)^{**}\right)
		\]
		such that
		\begin{equation}\label{eq-form-comp}
			\phi_U^*\left(\sigma_X|_{U_X}\right)
			=
			\sigma_Y|_{U_Y},
		\end{equation}
		and such that
		\[
		\left(X,\sigma_X|_{X_{\rm reg}}\right)
		\]
		is a primitive symplectic variety in the sense of
		Definition \ref{def-symp-varie}.

		Assume moreover that
		there exists
		\[
		v\in H^2(Y,\mathbb Z)\cap H^{1,1}(Y)
		\]
		such that $q_Y(v)>0,$
		then $Y$ is Moishezon.
	\end{theorem}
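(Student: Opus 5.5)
The plan runs the comparison of Step 2 in the proof of Theorem \ref{thm-arb-positive} in the opposite direction: transport the class $v$ from $Y$ to $X$, show its BBF square is still positive, and then apply the projectivity criterion of Bakker--Lehn \cite[Theorem 1.2]{BL22} to $X$.

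\textbf{Setting up the graph.} Choose the graph $\Gamma$ of $\phi$, a resolution $\rho:W\to\Gamma$, and the maps $\mu:W\to Y$, $\nu:W\to X$, $\tau:=\mu^*\sigma_Y$, and the open set $W_U$, exactly as in Step 1 of that proof. The only inputs Step 1 used were the small model $\phi$, the reflexive form $\sigma_X$ satisfying \eqref{eq-form-comp}, and the facts that $X$ has rational singularities and $Y$ is smooth. All of these are hypotheses of Theorem \ref{thm-positive-moishezon}. Hence Claim \ref{claim-boundary-power-vanishing} holds verbatim: for every prime divisor $E\subset W\setminus W_U$, the form $\tau$ restricted to a resolution of $E$ has vanishing $(n-1)$-st power.

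\textbf{Transporting the class.} By \eqref{iso-pica}, write $v=c_1(L_Y)$ for a line bundle $L_Y$ on $Y$. Set $\mathcal G:=\nu_*\mu^*L_Y$ and $A:=\mathcal G^{**}$, a rank-one reflexive sheaf on $X$. Since $X$ is strongly $\mathbb Q$-factorial, there is an $m>0$ with $A^{[m]}$ invertible. Replace $L_Y$ by $L_Y^{\otimes m}$; this rescales $q_Y(v)$ by $m^2>0$, so positivity is preserved. Then $A':=A^{[m]}$ is a line bundle agreeing with $(\nu_*\mu^*L_Y^{\otimes m})^{**}$ on $U_X$. Repeat the meromorphic-section argument of Step 2 with the roles of $Y$ and $X$ swapped. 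Both $\nu^*A'$ and $\mu^*L_Y^{\otimes m}$ are line bundles on $W$ that are isomorphic over $W_U$, so
\[
\nu^*c_1(A')-\mu^*(mv)=\sum_i b_i[E_i], \qquad E_i\subset W\setminus W_U .
\]
Exactly as before, Claim \ref{claim-boundary-power-vanishing} kills the cross terms. The projection formula and \eqref{eq-bbf-primitive-arbitrary-resolution} then give
\[
q_X(c_1(A'))=\lambda^{-1}m^2q_Y(v)>0 .
\]

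\textbf{Conclusion.} Since $c_1(A')$ is an integral $(1,1)$-class on the primitive symplectic variety $X$ with positive BBF square, \cite[Theorem 1.2]{BL22} shows that $X$ is projective. Hence $Y$, which is bimeromorphic to $X$, is Moishezon.

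\textbf{Main obstacle.} The delicate point is the transport step. It needs strong $\mathbb Q$-factoriality, which is exactly why that hypothesis appears, to get a genuine line bundle on $X$. It also needs the check that the difference $\nu^*c_1(A')-\mu^*(mv)$ is supported on $W\setminus W_U$: the smallness of $\phi$ ensures that no divisor of $X$ is contracted, so reflexive extension across $X\setminus U_X$ introduces no extra divisorial contribution. I would also verify that Claim \ref{claim-boundary-power-vanishing} used nothing specific to Moishezonness; I believe it did not.
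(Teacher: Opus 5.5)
Your proposal is correct and is essentially the paper's proof. It reuses the common resolution and the boundary-power vanishing of Claim~\ref{claim-boundary-power-vanishing}, which needs neither Moishezonness nor projectivity. It uses strong $\mathbb Q$-factoriality to get a line bundle $A=\bigl((\nu_*\mu^*L)^{**}\bigr)^{[m]}$ on $X$ with $q_X(c_1(A))=\lambda^{-1}m^2q_Y(v)>0$, and it concludes by \cite[Theorem 1.2]{BL22}.

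The only difference is one of bookkeeping. The paper transports $A$ back to $Y$, checks $(\mu_*\nu^*A)^{**}\cong L^{\otimes m}$ by reflexive extension across $Y\setminus U_Y$, and then reuses the already-proved $X$-to-$Y$ comparison verbatim, so it never has to rerun the meromorphic-section argument in the reverse direction. The paper also treats $n=1$ separately; your argument still works there, because $\phi$ is then biholomorphic and there are no boundary divisors.
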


	\begin{proof}
		Note that the first Betti number of the hyperfujiki manifold $Y$ is even.  Consequently, if $n=1$,
		then $Y$
		is automatically K\"ahler and thus it is a hyperk\"ahler manifold. \footnote{This can also be obtained by the singularity theory: since the symplectic variety $X$ has terminal  singularities, $X_{\rm{sing}}$ has
			codimension at least $4$ (\cite[Theorem 3.4(3)]{BL22} in the analytic setting).   Consequently, 
			if $n=1$, then the terminal surface $X$ is smooth, and a small
			bimeromorphic map between smooth compact surfaces is biholomorphic.}    As a result, the projectivity criterion (\cite[Theorem 2]{Huy01}, \cite[Theorem 3.11]{Huy99} or \cite[Theorem 1.2]{BL22}) for primitive
		symplectic varieties  and the BBF-positivity condition of $v$
		imply that  $Y$ is projective.
		Thus we  assume that $n\geq2$ from now on.

		\setcounter{proofstep}{0}
		
		The proof is divided into three steps. First, we recall the common
		resolution and boundary power vanishing. Second, we use strong
		$\mathbb Q$-factoriality to transform a line bundle from $Y$ to $X$ and
		compare the two BBF forms. Third, we apply the projectivity criterion for
		primitive symplectic varieties.
		
		\begin{step}
			The common resolution and boundary power vanishing
		\end{step}
		
		The argument in Step 1 of the proof of
		Theorem \ref{thm-arb-positive} applies after omitting the initial
		application of Proposition \ref{prop-small-primitive-model}. Indeed, that
		application is used only to produce the variety $X$, the small
		bimeromorphic map $\phi$, and the  reflexive symplectic form,
		all of which are assumed here. No other part of that step uses the
		Moishezonness of $Y$.
		
		Since $\phi$ is small,
		\[
		\operatorname{codim}_Y(Y\setminus U_Y)\geq2,
		\qquad
		\operatorname{codim}_X(X\setminus U_X)\geq2.
		\]
		Moreover, $U_X\subset X_{\rm reg}$ because $U_X\cong U_Y$ and $Y$ is
		smooth. Choose the common resolution used in that step,
		\[
		\mu:W\to Y,
		\qquad
		\nu:W\to X,
		\]
		and set
		\[
		W^\circ:=\nu^{-1}(X_{\rm reg}),
		\qquad
		W_U:=\mu^{-1}(U_Y)=\nu^{-1}(U_X).
		\]
		Then both restrictions
		\[
		\mu|_{W_U}:W_U\xrightarrow{\cong}U_Y,
		\qquad
		\nu|_{W_U}:W_U\xrightarrow{\cong}U_X
		\]
		are biholomorphic, and
		\[
		\nu|_{W_U}=\phi_U\circ\mu|_{W_U}.
		\]
		Set
		\[
		\tau:=\mu^*\sigma_Y\in H^0(W,\Omega_W^2).
		\]
		By \eqref{eq-form-comp}, we have
		\[
		\begin{aligned}
			(\nu|_{W_U})^*(\sigma_X|_{U_X})
			&=(\mu|_{W_U})^*
			\left(\phi_U^*(\sigma_X|_{U_X})\right) \\
			&=(\mu|_{W_U})^*(\sigma_Y|_{U_Y})
			=\tau|_{W_U}.
		\end{aligned}
		\]
		The open subset $W_U$ is dense in $W^\circ$. Thus the identity theorem,
		applied to the two holomorphic forms on $W^\circ$, gives
		\[
		\tau|_{W^\circ}
		=
		(\nu|_{W^\circ})^*(\sigma_X|_{X_{\rm reg}}).
		\]
		Thus $\tau$ is the extension to $W$ of the pull-back of
		$\sigma_X|_{X_{\rm reg}}$. Note that the proof of
		Claim \ref{claim-boundary-power-vanishing}  uses only the codimension-two issue of the smallness of
		$\phi$, the compatibility \eqref{eq-form-comp}, and the existence and
		functoriality of reflexive pull-back (\cite{KS21}) for $\sigma_X$, which is available
		because $X$ has terminal, and hence rational, singularities.    In particular, neither the Moishezonness of $Y$ nor the
		projectivity of $X$ enters the proof.  Consequently, the same argument
		gives 
		\begin{equation}\label{power-vani-nonmoishe}
			\left((\iota_E\circ r)^*\tau\right)^{n-1}=0
		\end{equation}
		for any reduced irreducible divisor $E\subset W\setminus W_U$, where
		$\iota_E:E\hookrightarrow W$ is the natural inclusion and
		$r:\widetilde E\to E$ is a resolution.

		\begin{step}
			The transform from $Y$ to $X$ and comparison of the BBF forms
		\end{step}
		
		Recall from \eqref{iso-pica} that there exists $L\in\operatorname{Pic}(Y)$
		such that $c_1(L)=v$. Define
		\[
		\mathscr L_X
		:=
		\left(\nu_*\mu^*L\right)^{**}.
		\]
		Then $\mathscr L_X$
		is a rank-one reflexive sheaf on $X$, and we have
		\begin{equation}\label{eq-transform-open}
			\mathscr L_X|_{U_X}
			\cong
			(\phi_U^{-1})^*(L|_{U_Y}).
		\end{equation}
		
		Since $X$ is strongly $\mathbb Q$-factorial, there exists an integer
		$m>0$ such that
		\[
		A
		:=
		\mathscr L_X^{[m]}
		:=
		\left(\mathscr L_X^{\otimes m}\right)^{**}
		\]
		is a holomorphic line bundle on $X$. Set
		\[
		L_m
		:=
		\left(\mu_*\nu^*A\right)^{**}.
		\]
		Then  $L_m$ is a rank-one reflexive sheaf on $Y$, and thus it
		is a holomorphic line bundle, because $Y$ is smooth. By
		\eqref{eq-transform-open},
		\[
		A|_{U_X}
		\cong
		(\phi_U^{-1})^*(L^{\otimes m}|_{U_Y}).
		\]
		Consequently, we have
		\[
		L_m|_{U_Y}
		\cong
		L^{\otimes m}|_{U_Y}.
		\]
		Since $Y\setminus U_Y$ has codimension at least two and both sides are
		reflexive sheaves on the normal space $Y$, this isomorphism extends
		uniquely to $Y$. Thus
		\begin{equation}\label{eq-return-transform}
			L_m\cong L^{\otimes m}.
		\end{equation}
		
		Note that the boundary
		power vanishing  \eqref{power-vani-nonmoishe} has been established. Then the argument  (neither the
		Moishezonness of $Y$ nor the projectivity of $X$ is used in that
		argument) proving \eqref{eq-step2-compare-X-to-Y}  applies in the present setting and gives
		\[
		q_Y(c_1(L_m))
		=
		\lambda q_X(c_1(A)),
		\qquad
		\lambda>0.
		\]
		By \eqref{eq-return-transform}, we have
		$c_1(L_m)=m v$. Thus we have
		\begin{equation}\label{eq-reverse-bbf}
			q_X(c_1(A))
			=
			\frac{m^2}{\lambda}q_Y(v)
			>0.
		\end{equation}

		\begin{step}
			Projectivity of $X$ and Moishezonness of $Y$
		\end{step}
		
		By the BBF-positivity \eqref{eq-reverse-bbf}, the projectivity criterion (\cite[Theorem 1.2]{BL22}) for the primitive symplectic
		variety $X$   implies that $X$ is projective.
		Thus 
		$Y$ is Moishezon.  This completes the proof of Theorem \ref{thm-positive-moishezon}.
	\end{proof}

	\subsection{A weak K\"ahler minimal-model condition $\left(\mathrm{KMM}_{0,d}\right)$}
	
	In this subsection, we show that the existence assumption in
	Theorem \ref{thm-positive-moishezon} concerning a small primitive
	symplectic model is satisfied for any hyperfujiki $4$-fold.
	More generally, we show that any hyperfujiki $2n$-fold admits such a
	small primitive symplectic model under a natural condition that is
	weaker than the validity of the $2n$-dimensional
	K\"ahler minimal model program.

	We first introduce a weaker K\"ahler minimal-model condition, which is inspired by Observation \ref{rem-kmm-relation-dim 4}.
	
	\begin{definition}\label{def-kmm-zero}
		
		Fix a positive integer $d$.  We say that
		\emph{condition $\left(\mathrm{KMM}_{0,d}\right)$ holds} (the subscript $0$ refers to $K_S$ being trivial) if the following
		assertion is true.  Let $V$ be any smooth connected compact K\"ahler $d$-fold
		for which there exist a smooth connected compact complex $d$-fold $S$ with
		$K_S\cong\mathcal O_S$  and a proper modification $\pi:V\to S$. \footnote{Note that in this setting,  $V$ satisfies that  $K_V \cong \mathcal{O}_V\left(E_\pi\right)$  for $E_\pi \geq 0$ exceptional and that both the Kodaira dimension and numerical dimension vanish: $\kappa(V)=\operatorname{nd}\left(K_V\right)=0$.}
		Then there exist a normal compact K\"ahler $d$-dimensional variety $V_{\rm{min}}$ with only
		terminal singularities, a bimeromorphic map
		\[
		\chi:V\dashrightarrow V_{\rm{min}},
		\]
		and a positive integer $\ell$ such that
		\[
		\omega_{V_{\rm{min}}}^{[\ell]}
		:=
		\left(\omega_{V_{\rm{min}}}^{\otimes\ell}\right)^{**}
		\]
		is a  nef (see \cite[Definition 2.1(2) and Remark 2.2]{DHP24} for the definition of nefness) invertible  sheaf.   
	\end{definition}
	
	Note that condition $\left(\mathrm{KMM}_{0,d}\right)$ only asserts the existence of the
	particular terminal nef model above.  It does not require the existence of an
	MMP sequence or semiampleness of the canonical bundle.  The following observation shows that condition 
	$\left(\mathrm{KMM}_{0,d}\right)$
	is  weaker than the usual standard K\"ahler MMP condition.

	\begin{observation}
		\label{rem-kmm-relation-dim 4}
		The known $4$-dimensional K\"ahler MMP (\cite[Theorem 1.1]{DHP24})
		implies that
		$\left(\mathrm{KMM}_{0,4}\right)$ holds. 
	\end{observation}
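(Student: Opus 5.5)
We have to show that the four-dimensional K\"ahler MMP of Das--Hacon--P\u{a}un \cite[Theorem 1.1]{DHP24} yields the terminal nef model required in Definition \ref{def-kmm-zero} with $d=4$. Let $V$ be a smooth connected compact K\"ahler $4$-fold, and let $\pi:V\to S$ be a proper modification onto a smooth compact complex $4$-fold $S$ with $K_S\cong\mathcal O_S$. The plan is to check that $K_V$ is pseudoeffective, run the MMP, and conclude that it terminates with a minimal model rather than a Mori fibre space.

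\emph{Step 1: pseudoeffectivity.} Since $S$ is smooth, hence has canonical singularities, and $K_S$ is trivial, the ramification formula gives
\[
K_V\cong\pi^*K_S\otimes\mathcal O_V(E_\pi)\cong\mathcal O_V(E_\pi)
\]
with $E_\pi\geq0$ exceptional. Thus $K_V$ is effective, so $\kappa(V)\geq 0$ and $K_V$ is pseudoeffective. Moreover $h^0(V,mK_V)=h^0(S,\mathcal O_S)=1$, because $\pi_*\mathcal O_V(mE_\pi)=\mathcal O_S$, so $\kappa(V)=0$. This is recorded for orientation only and is not used below.

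\emph{Step 2: run the MMP.} Apply \cite[Theorem 1.1]{DHP24} to the smooth, hence $\mathbb Q$-factorial and terminal, compact K\"ahler $4$-fold $V$ with pseudoeffective $K_V$. It yields a finite sequence of $K_V$-divisorial contractions and flips,
\[
\chi:V\dashrightarrow V_{\min},
\]
ending in a $\mathbb Q$-factorial compact K\"ahler $4$-fold $V_{\min}$ with terminal singularities and $K_{V_{\min}}$ nef. Since the canonical class is pseudoeffective, a Mori fibre space cannot occur as the output. Each step is bimeromorphic, so $\chi$ is bimeromorphic and $V_{\min}$ is a normal compact K\"ahler variety of dimension $4$.

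\emph{Step 3: invertibility and nefness.} By the analytic definition of $\mathbb Q$-factoriality recalled in \S\ref{sec-prelim-singu}, together with condition (1) in Definition \ref{def-terminal-analytic}, there exists $\ell>0$ such that $\omega_{V_{\min}}^{[\ell]}$ is invertible. It remains to see that this line bundle is nef in the sense of \cite[Definition 2.1(2)]{DHP24}. This holds because $K_{V_{\min}}$ is nef as a $\mathbb Q$-Cartier class and $c_1\bigl(\omega_{V_{\min}}^{[\ell]}\bigr)=\ell K_{V_{\min}}$.

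The main obstacle is bookkeeping rather than mathematics. One must make sure that the precise hypotheses and conclusions of \cite[Theorem 1.1]{DHP24} match: the input must be K\"ahler with terminal $\mathbb Q$-factorial singularities and pseudoeffective $K$, and the output notion of nefness must agree with the one used in Definition \ref{def-kmm-zero}. If \cite{DHP24} states the result in the pair or log setting, I would take the boundary to be $0$. If it requires $K_V$ pseudoeffective in the analytic sense, I would note that this follows from the effectivity of $E_\pi$ established in Step 1.
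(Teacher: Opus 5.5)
Your Step 2 is where the argument breaks. You attribute to \cite[Theorem 1.1]{DHP24} an output that, as the paper uses it, it does not provide: a terminating sequence of divisorial contractions and flips ending in a terminal $V_{\rm min}$ with $K_{V_{\rm min}}$ nef. What the theorem gives is a \emph{log minimal model} of $(V,0)$ in the sense of \cite[Definition 2.8]{DHP24}. This is a Birkar-type notion. A priori $\chi^{-1}$ may extract divisors, and these are recorded in a reduced boundary $E_{V_{\rm min}}$ with $K_{V_{\rm min}}+E_{V_{\rm min}}$ nef. Divisors on $V$ contracted by $\chi$ are only required to have strictly larger discrepancy on $V_{\rm min}$. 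Nothing in that statement says that no divisors are extracted, that $K_{V_{\rm min}}$ itself is nef, or that $V_{\rm min}$ is terminal. Those are exactly what has to be proved, so the difficulty is not mere bookkeeping.

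Two arguments are missing.

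\emph{Extracted divisors.} You must show $E_{V_{\rm min}}=0$. Since $(V,0)$ is plt, a negativity-lemma argument rules out extracted divisors; this is \cite[Lemma 2.9(2)]{DHP24}. Only after this is $K_{V_{\rm min}}$ nef, so that your Step 3 applies.

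\emph{Terminality.} Take a common resolution $\mu:W\to V$, $\nu:W\to V_{\rm min}$, and check that every $\nu$-exceptional prime divisor $F$ has positive discrepancy.
If $F$ is also $\mu$-exceptional, then $a(F,V_{\rm min},0)\geq a(F,V,0)>0$. This uses the monotonicity of discrepancies \cite[Lemma 2.9(1)]{DHP24} and the smoothness of $V$.
Otherwise $F$ is the strict transform of a prime divisor $P\subset V$ contracted by $\chi$. Then $a(F,V_{\rm min},0)=a(P,V_{\rm min},0)>a(P,V,0)=0$ by \cite[Definition 2.8(3)(iv)]{DHP24}.
Together with the invertibility of $\omega_{V_{\rm min}}^{[\ell]}$, which comes from $\mathbb Q$-factoriality, this gives terminality.

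Even if you had an honest terminating MMP, you would still need to justify that $K$-negative divisorial contractions and flips do not decrease discrepancies, so that terminality survives. Your proposal simply asserts it. Your Steps 1 and 3 (effectivity of $K_V$ giving pseudoeffectivity, and passing from nefness of $K_{V_{\rm min}}$ to nefness of $\omega_{V_{\rm min}}^{[\ell]}$) are fine and agree with the paper.
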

	
	\begin{proof}
		Let $V$ be as in
		Definition \ref{def-kmm-zero}, and let $\pi:V\to S$ be the
		corresponding proper modification, where $S$ is smooth and
		$K_S\cong\mathcal O_S$.  
		Since $V$ is a smooth
		compact K\"ahler fourfold, it is $\mathbb Q$-factorial and the pair
		$(V,0)$ is dlt.  Moreover, we have
		\[
		\omega_V
		\cong
		\pi^*\omega_S\otimes\mathcal O_V(R_\pi)
		\cong
		\mathcal O_V(R_\pi),
		\]
		where $R_\pi\geq 0$ is a $\pi$-exceptional divisor.  Thus  we  apply
		\cite[Theorem 1.1]{DHP24} to the pair $(V,B)=(V,0)$.  
		Consequently, $(V,0)$ admits a log minimal model, given by a
		bimeromorphic map $\chi:V\dashrightarrow V_{\rm{min}}$, where
		$V_{\rm{min}}$ is a normal compact K\"ahler $4$-dimensional variety.  
		Since
		$(V,0)$ is plt, \cite[Lemma 2.9(2)]{DHP24} shows that the divisor
		$E_{V_{\rm{min}}}$ occurring in the definition (\cite[Definition 2.8]{DHP24}) of a log minimal model
		vanishes, and $(V_{\rm{min}},0)$ is a log terminal model (\cite[Definition 2.8 (3)]{DHP24}) of $(V,0)$.

		\begin{claim}\label{claim-termi}
			$V_{\rm{min}}$ has only terminal singularities.
		\end{claim}
		
		\begin{proof}[Proof of Claim \ref{claim-termi}]
			Note that the variety $V_{\rm{min}}$ is
			$\mathbb Q$-factorial (\cite[Definition 2.8(1)-(2)]{DHP24}). By \cite[Definition 2.7(1)]{DHP24}, there is a
			positive integer $\ell$ such that
			\[
			\omega_{V_{\rm{min}}}^{[\ell]}
			:=
			\left(\omega_{V_{\rm{min}}}^{\otimes\ell}\right)^{**}
			\]
			is invertible.
			
			After eliminating the indeterminacies (e.g., \cite[Theorem 2.1.24]{MM07}) of $\chi$,  we obtain a smooth
			compact complex manifold $W$ and proper modifications
			\[
			\mu:W\to V,
			\qquad
			\nu:W\to V_{\rm{min}}
			\]
			such that $\chi\circ\mu=\nu$. Let
			\[
			F_1,\ldots,F_r
			\]
			be the prime divisors contained in the exceptional locus of $\nu$.
			Fix $1\leq i\leq r$.
			
			Suppose first that $F_i$ is $\mu$-exceptional. Since $V$ is smooth,
			we have $a(F_i,V,0)>0$. By \cite[Lemma 2.9(1)]{DHP24},
			\[
			a(F_i,V_{\rm{min}},0)
			\geq
			a(F_i,V,0)
			>0.
			\]
			
			Suppose now that $F_i$ is not $\mu$-exceptional. Then
			\[
			P_i:=\mu(F_i)_{\rm red}
			\]
			is a prime divisor on $V$, and $F_i$ is the strict transform of $P_i$
			on $W$. Since $F_i$ is $\nu$-exceptional, the divisor $P_i$ is
			contracted by $\chi$. Hence \cite[Definition 2.8(3)(iv)]{DHP24} gives
			\[
			a(P_i,V_{\rm{min}},0)
			>
			a(P_i,V,0)
			=0.
			\]
			Since $F_i$ is the strict transform of $P_i$ on the common resolution
			$W$, the definition of discrepancy gives
			\[
			a(F_i,V_{\rm{min}},0)
			=
			a(P_i,V_{\rm{min}},0)
			>0.
			\]
			Thus each prime divisor contained in the exceptional locus of $\nu$
			has positive discrepancy. Together with the invertibility of
			$\omega_{V_{\rm{min}}}^{[\ell]}$, this proves the claim  that $V_{\rm{min}}$ has
			only terminal singularities.
		\end{proof}

		Finally, \cite[Definition 2.8(3)(ii)]{DHP24} gives that
		$\omega_{V_{\rm{min}}}$  (note that, in
		\cite[Definition 2.7]{DHP24}, $K_{\bullet}$ denotes the canonical sheaf
		rather than a canonical divisor) is nef, where nefness is understood in the
		Bott--Chern sense of \cite[Definition 2.1(2) and Remark 2.2]{DHP24}.
		By the standard convention for
		the first Chern class of a $\mathbb Q$-line bundle, the Bott--Chern
		class determined by the canonical sheaf is $ \frac{1}{\ell}c_1(\omega_{V_{\rm{min}}}^{[\ell]}).$
		This class is nef, and thus its positive multiple $c_1(\omega_{V_{\rm{min}}}^{[\ell]})$ is nef.
		Therefore $\omega_{V_{\rm{min}}}^{[\ell]}$ is a nef invertible
		sheaf.
		Hence all the requirements in Definition \ref{def-kmm-zero} are
		fulfilled, and consequently $\left(\mathrm{KMM}_{0,4}\right)$ holds.
	\end{proof}

	\subsection{An equivalent characterization of Moishezonness via BBF positivity under condition $\left(\mathrm{KMM}_{0,d}\right)$}

	\begin{lemma}[{\cite[Lemma 1.3]{W21} Negativity Lemma}]\label{Negativity-Lemma}
		Let $h: Z \to Y$ be a proper modification between normal complex varieties. Let $B$ be a Cartier divisor on $Z$ such that $-B$ is $h$-nef. Then $B$ is effective if and only if $h_* B$ is effective.
	\end{lemma}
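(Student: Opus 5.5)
One direction is formal: if $B$ is effective then $h_*B$ is effective, since pushing forward a prime divisor gives either a prime divisor or $0$. For the converse I would follow the classical argument of Koll\'ar--Mori \cite[Lemma 3.39]{KM98}, transposed to the analytic setting. Assume $h_*B\geq 0$ and write $B=B^+-B^-$, where $B^\pm\geq 0$ have no common components. Every component of $B^-$ is then $h$-exceptional, because otherwise it would appear with negative coefficient in $h_*B$. The goal is to show $B^-=0$.

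The problem is local over $Y$, so I first replace $Y$ by a small relatively compact Stein open neighbourhood of a point of $h(\operatorname{Supp}B^-)$ and $Z$ by its preimage. Next I reduce to smooth $Z$: take a resolution $g:Z'\to Z$ and replace $B$ by $g^*B$, which is Cartier. The divisor $-g^*B$ is $(h\circ g)$-nef, and $(h\circ g)_*g^*B=h_*B$. Moreover, $g^*B$ is effective if and only if $B$ is, because $g_*g^*B=B$ and the negative part of $g^*B$ maps onto that of $B$. So it suffices to treat smooth $Z$ with $B$ an $\mathbb{R}$- or $\mathbb{Z}$-divisor.

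Next I cut down to a surface. Suppose $B^-\neq 0$, pick a component $E$ of $B^-$, and let $y$ be a general point of $h(E)$, with $\dim h(E)=k\leq \dim Y-2$. Using the Stein embedding of a neighbourhood of $y$ into some $\mathbb{C}^N$, I cut $Y$ by $k$ general hypersurfaces through $y$ and by further general hypersurfaces until the preimage in $Z$ is a smooth surface $Z_2$ (Bertini on the manifold $Z$). The image of $Z_2$ in $Y$ is a normal surface germ $Y_2$, and $Z_2\to Y_2$ is a proper modification contracting the curves $Z_2\cap h^{-1}(y)$. The restriction $B|_{Z_2}$ still has $-B|_{Z_2}$ nef over $Y_2$. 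Its negative part $B_2^-$ is nonzero, since it contains $E\cap Z_2$, and it is supported in the exceptional curves, since general sections keep $B^+$ and $B^-$ without common components.

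On the surface I use Grauert's criterion: the intersection matrix of the exceptional curves of $Z_2\to Y_2$ is negative definite, so $(B_2^-)^2<0$. On the other hand $B_2^+\cdot B_2^-\geq 0$, as there are no common components. Hence
\[
B|_{Z_2}\cdot B_2^- = B_2^+\cdot B_2^- - (B_2^-)^2>0 .
\]
But $B_2^-$ is an effective combination of contracted curves and $-B|_{Z_2}$ is nef over $Y_2$, so $B|_{Z_2}\cdot B_2^-\leq 0$. This contradiction shows $B^-=0$.

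I expect the main obstacle to be the analytic cutting-down step. One must check that general hypersurface sections in the local Stein embedding preserve relative nefness, which is clear for curves in fibres. One must also ensure that the component $E$ genuinely contributes an exceptional curve over the point $y$, which requires choosing $y$ general in $h(E)$ so that the fibre of $E$ over it has the expected dimension. The analytic form of Grauert's negativity for contracted curves on a smooth surface germ is standard and I would cite it.
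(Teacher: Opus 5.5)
The paper gives no proof of this lemma; it quotes it from \cite[Lemma 1.3]{W21}, so your argument has to stand on its own. The easy direction is fine, and so are the reduction to smooth $Z$ via $g^*B$ and the surface computation using Grauert's negative definiteness. The gap is in the cutting-down step. Slicing $Y$ and taking preimages gives the surface $Z_2$ you need only when $k=\dim h(E)=\dim Y-2$.

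In general, after your $k$ cuts the fibre $E_y=E\cap h^{-1}(y)$ still has dimension $\dim Y-1-k$. Any further hypersurface $H\subset Y$ then either misses $y$, and you lose $E$, or passes through $y$, in which case $h^{-1}(H)\supseteq h^{-1}(y)\supseteq E_y$. So $E_y$ lies in the base locus of the pulled-back system, and Bertini says nothing there. The final preimage contains $E_y$ as a component of dimension $\dim Y-1-k\geq 2$, so it is never a smooth surface meeting $h^{-1}(y)$ in curves. This already happens for $h:\operatorname{Bl}_y\mathbb C^3\to\mathbb C^3$ with $E$ the exceptional divisor: every surface through $y$ has preimage containing $E\cong\mathbb P^2$. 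Passing to strict transforms does not fix this either. Blow up $y$, then a point of the first exceptional divisor. The strict transform of a general surface through $y$ is then disjoint from the second exceptional divisor, so that divisor is never detected.

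What is missing is a second kind of slicing, by hypersurfaces of $Z$ rather than of $Y$, organised as an induction on $\dim Y$.

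\emph{First cut, in $Y$.} While some component $E$ of $B^-$ has $\dim h(E)\geq 1$, cut by one general base-point-free hypersurface $H$ of $Y$ meeting $h(E)$. Then $h^{-1}(H)$ is smooth and maps bimeromorphically onto the normalization of $H\cap Y$. The images of all components of $B^-$ keep codimension $\geq 2$, so $h_*B\geq 0$ and relative nefness both pass to the slice, and $E$ survives. By induction this rules out such components, so you reduce to the case where every component of $B^-$ maps to a point.

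\emph{Second cut, in $Z$.} If $\dim Y\geq 3$, cut $Z$ by a general member $H_Z$ of a linear system that is very ample relative to $Y$, and replace $Y$ by the normalization of $h(H_Z)$. Now $E\cap H_Z$ maps to a point, which has codimension $\geq 2$ in $h(H_Z)$, so the hypotheses descend again.

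The order matters. Slicing $Z$ first can turn a component with $\dim h(E)=\dim Y-2$ into a non-exceptional divisor over $h(H_Z)$, which destroys $h_*B\geq 0$. In the analytic setting the second cut needs $h$ to be projective over a relatively compact Stein open. You can arrange this with Hironaka's Chow lemma plus the same pull-back argument you already used for the resolution $g$.
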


	We now show that a hyperfujiki
	manifold of dimension $2n$ has a small strongly $\mathbb Q$-factorial K\"ahler terminal model under the $\left(\mathrm{KMM}_{0,2n}\right)$ assumptions.
	
	\begin{lemma}
		\label{lem-sqf-model-KMM}
		Let $n\geq 2$, and assume that
		$\left(\mathrm{KMM}_{0,2n}\right)$ holds.  Let $Y$ be a hyperfujiki
		manifold of dimension $2n$.  Then there exist a normal compact K\"ahler
		strongly $\mathbb Q$-factorial  (Definition \ref{def-strong-Q-fac}) $2n$-fold $Y_{\rm{sqf}}$ with terminal
		singularities and a small bimeromorphic map
		\[
		\psi:Y\dashrightarrow Y_{\rm{sqf}},
		\]
		i.e.,  $\psi$ is an isomorphism in codimension one.
	\end{lemma}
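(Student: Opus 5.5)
We first produce a terminal model with nef canonical sheaf, then show the induced map from $Y$ is small, and finally pass to a strongly $\mathbb Q$-factorial small modification.

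\textbf{Step 1: the terminal nef model.} Since $Y$ is in Fujiki class $\mathscr C$, there is a proper modification $\pi:V\to Y$ from a smooth compact K\"ahler $2n$-fold $V$. Since $K_Y\cong\mathcal O_Y$, condition $\left(\mathrm{KMM}_{0,2n}\right)$ applies with $S=Y$. It gives a normal compact K\"ahler terminal variety $V_{\min}$, a bimeromorphic map $\chi:V\dashrightarrow V_{\min}$, and an integer $\ell$ with $\omega_{V_{\min}}^{[\ell]}$ invertible and nef. Set $Y_{\min}:=V_{\min}$ and $\tau:=\chi\circ\pi^{-1}:Y\dashrightarrow Y_{\min}$. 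Take a common resolution $\mu:W\to Y$, $\nu:W\to Y_{\min}$. Then
\[
\mu^*K_Y+E_\mu=K_W=\nu^*K_{Y_{\min}}+\Delta,
\]
where $E_\mu\geq0$ is $\mu$-exceptional with support all of $\operatorname{Exc}(\mu)$ (since $Y$ is smooth). Terminality gives $\Delta\geq0$ with support all of $\operatorname{Exc}(\nu)$; we work with $\ell$-multiples so everything is Cartier.

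\textbf{Step 2: smallness via negativity (Lemma \ref{Negativity-Lemma}).} Since $K_Y\sim0$, the difference of canonical classes gives $\nu^*K_{Y_{\min}}\equiv E_\mu-\Delta$. Push forward by $\mu$: $-\mu_*\Delta\equiv\mu_*\nu^*K_{Y_{\min}}$. This yields an effective divisor on $Y$ that is numerically $-$(a nef-type class); I will argue it vanishes.

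\begin{itemize}
\item First I apply Lemma \ref{Negativity-Lemma} to $\nu$ with $B:=\Delta-E_\mu$. Here $-B\sim_{\mathbb Q}\nu^*K_{Y_{\min}}$ is $\nu$-trivial, hence $\nu$-nef. Since $\nu_*B=-\nu_*E_\mu\leq0$, and symmetrically for $-B$, applying the lemma both ways forces $\nu_*E_\mu=0$ and $B$ effective. This shows every $\mu$-exceptional divisor is $\nu$-exceptional, and $\Delta\geq E_\mu$.
\item Conversely, I need every $\nu$-exceptional divisor to be $\mu$-exceptional. Let $D:=\mu_*\Delta\geq0$. On $Y$ we have $D\equiv-\mu_*\nu^*K_{Y_{\min}}$, and $\mu_*\nu^*K_{Y_{\min}}$ is pseudoeffective because $\omega_{Y_{\min}}^{[\ell]}$ is nef. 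So $-D$ is both pseudoeffective and anti-effective, and pairing with a Gauduchon/Fujiki-type positive class (or using a K\"ahler class on $V$ and pulling back) forces $D=0$.
\item Hence $\operatorname{Exc}(\nu)=\operatorname{Exc}(\mu)$ as divisors, so $\tau$ contracts no divisors in either direction, i.e.\ $\tau$ is small. It also follows that $\Delta=E_\mu$ and $K_{Y_{\min}}\equiv0$.
\end{itemize}

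This step is the main obstacle: making the argument that $D=0$ rigorous in the non-K\"ahler setting of $Y$. I would first try to perform the positivity argument on $V$, which is K\"ahler, using a K\"ahler class $\omega_V$ and the numerical class $\pi^*D$ paired against $\omega_V^{2n-1}$.

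\textbf{Step 3: strong $\mathbb Q$-factorialization.} Given the small map $Y\dashrightarrow Y_{\min}$, I follow the argument of Lemma \ref{lem-sqf-model}. By \cite[Lemma 2.11]{CH24}, $Y_{\min}$, being compact K\"ahler and terminal, admits a small projective bimeromorphic morphism $Y_{\rm sqf}\to Y_{\min}$ from a strongly $\mathbb Q$-factorial variety. Then $Y_{\rm sqf}$ is again K\"ahler (projective over a K\"ahler space) and terminal, since the morphism is small and $K$-trivial over the base. The composite $\psi:Y\dashrightarrow Y_{\rm sqf}$ is small, as a composite of small bimeromorphic maps.
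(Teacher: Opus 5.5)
There is a genuine gap in Step~2: the first bullet misapplies Lemma~\ref{Negativity-Lemma}.

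For the $\nu$-numerically trivial divisor $B=\Delta-E_\mu$, apply the lemma to $-B$. Since $\nu_*(-B)=\nu_*E_\mu\geq 0$, it gives $-B\geq 0$, i.e.\ $E_\mu\geq\Delta$. Applied to $B$, it only says that $B\geq 0$ if and only if $\nu_*E_\mu=0$, which is circular. Nothing forces $\nu_*E_\mu=0$, and $\nu$-triviality alone cannot give it. Take $Y$ hyperk\"ahler, $V=Y$, and $Y_{\rm min}:=\operatorname{Bl}_yY$, which is smooth, hence terminal, and K\"ahler, but has $\omega_{Y_{\rm min}}$ not nef. With $W=Y_{\rm min}$ and $\nu=\mathrm{id}$, one has $\Delta=0$ and $\nu_*E_\mu=E_\mu=(2n-1)E\neq 0$, and $\tau^{-1}$ contracts $E$.

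So your conclusions ``$\Delta\geq E_\mu$'' and ``every $\mu$-exceptional divisor is $\nu$-exceptional'' are unproved. This is exactly the direction where the nefness of $\omega_{Y_{\rm min}}^{[\ell]}$ has to be used. Your second bullet does use nefness, but only to show $\mu_*\Delta=0$, i.e.\ that every $\nu$-exceptional divisor is $\mu$-exceptional. That already follows from the correct output $E_\mu\geq\Delta$ of the first bullet. So the pseudoeffectivity/Gauduchon argument is redundant, and the hard direction is never addressed.

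The missing idea is to apply the negativity lemma a second time over $Y$ rather than over $Y_{\rm min}$. The divisor $-(\Delta-E_\mu)=E_\mu-\Delta\sim_{\mathbb Q}\nu^*K_{Y_{\rm min}}$ is nef, hence $\mu$-nef, and $\mu_*(\Delta-E_\mu)=\mu_*\Delta\geq 0$. Lemma~\ref{Negativity-Lemma} with $h=\mu$ then gives $\Delta\geq E_\mu$. Together with $E_\mu\geq\Delta$ this yields $E_\mu=\Delta$. Now $E_\mu$ has strictly positive coefficients along all $\mu$-exceptional prime divisors (as $Y$ is smooth), and $\Delta$ along all $\nu$-exceptional ones (by terminality). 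Hence the two sets of exceptional divisors coincide and $\tau$ is small.

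This is precisely the paper's argument, run on a K\"ahler blow-up $Z$ of $V$ with $p=\pi\circ\mu$. It also makes the non-K\"ahler positivity issue you flagged disappear, since no pairing with a positive class is needed.

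Your Step~3 agrees with the paper's. It uses \cite[Lemma 2.11]{CH24} for the small strongly $\mathbb Q$-factorialization. Terminality of $Y_{\rm sqf}$ comes from $\omega_{Y_{\rm sqf}}^{[r]}\cong\rho^*\omega_{Y_{\rm min}}^{[r]}$, together with the fact that a resolution of $Y_{\rm sqf}$ has the same exceptional divisors over $Y_{\rm sqf}$ and over $Y_{\rm min}$ because $\rho$ is small.
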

	
	\begin{proof}
		Since $Y$ belongs to Fujiki class $\mathscr C$, there exists a smooth compact
		K\"ahler modification
		\[
		\pi:V\to Y.
		\]
		Then by the definition of a hyperfujiki manifold,  $V$ 
		satisfies the
		hypotheses in Definition \ref{def-kmm-zero}.  Since 
		$\left(\mathrm{KMM}_{0,2n}\right)$ holds,
		we obtain    a normal compact K\"ahler $2n$-fold
		$Y_{\rm{min}}$ with terminal singularities, a bimeromorphic map
		\[
		\chi:V\dashrightarrow Y_{\rm{min}},
		\]
		and a positive integer $\ell$ such that
		$\omega_{Y_{\rm{min}}}^{[\ell]}$ is a nef invertible sheaf.  Set
		\[
		\tau:=\chi\circ\pi^{-1}:Y\dashrightarrow Y_{\rm{min}}.
		\]

		We eliminate the indeterminacies (e.g., \cite[Theorem 2.1.24]{MM07}) of $\chi$,  and obtain a smooth compact K\"ahler (since $V$ is K\"ahler and the composition $\mu$ of finitely many blowups is a projective/ K\"ahler morphism) manifold
		$Z$, a proper modification $\mu:Z\to V$ (obtained as a composition of a  finite succession of blow-ups with smooth
		centers), and a proper modification
		$q:Z\to Y_{\rm{min}}$.  Set $p:=\pi\circ\mu$. Then $\tau\circ p=q$.  We now have  the following commutative diagram
		\[
		\begin{tikzcd}[column sep=large, row sep=large]
			&
			Z
			\arrow[dl, "p"']
			\arrow[d, "\mu"]
			\arrow[dr, "q"]
			&
			\\
			Y
			\arrow[rr, dashed, bend right=18, "\tau"']
			&
			V
			\arrow[l, "\pi"']
			\arrow[r, dashed, "\chi"]
			&
			Y_{\rm{min}}
		\end{tikzcd}
		\]

		\begin{claim}[{\cite[Proof of Theorem 3.8, Step 1]{BCDG25}}]\label{claim-small-BCDG}
			The bimeromorphic map $\tau$ is small.
		\end{claim}
		
		\begin{proof}[Proof of Claim \ref{claim-small-BCDG}]
			We now run exactly the same negativity-lemma argument as in \cite[Proof of Theorem 3.8, Step 1]{BCDG25} to prove that $\tau$ is small, merely giving more details.
			
			Choose a sufficiently divisible positive multiple $m$ of $\ell$.  Since
			$Y$ and $Y_{\rm{min}}$ have terminal singularities, we may write
			\[
			\omega_Z^{\otimes m}
			\cong
			p^*\omega_Y^{\otimes m}
			\otimes
			\mathcal O_Z\left(\sum_i ma_iE_i\right)
			\]
			and
			\[
			\omega_Z^{\otimes m}
			\cong
			q^*\omega_{Y_{\rm{min}}}^{[m]}
			\otimes
			\mathcal O_Z\left(\sum_j mb_jF_j\right),
			\]
			where the $E_i$ are the $p$-exceptional prime divisors,  the $F_j$ are the
			$q$-exceptional prime divisors, and $a_i,b_j>0$ for all indices $i,j$.
			Set
			\[
			D:=\sum_i ma_iE_i-\sum_j mb_jF_j.
			\]
			Since $\omega_Y\cong\mathcal O_Y$, the above isomorphisms on $\omega_Z^{\otimes m}$ give
			\[
			\mathcal O_Z(D)
			\cong
			q^*\omega_{Y_{\rm{min}}}^{[m]}.
			\]
			Then it follows  that $D$ is $p$-nef by the nefness of $\omega_{Y_{\rm{min}}}^{[m]}$, and $-D$ is $q$-nef, since $D$
			is $q$-numerically trivial.

			Since each $E_i$ is $p$-exceptional, we have
			\[
			p_*(-D)
			=
			p_*\left(\sum_j mb_jF_j\right)
			\geq 0.
			\]
			Applying Lemma \ref{Negativity-Lemma} to $h=p$ and $B=-D$, we obtain
			$-D\geq 0$, and thus $D\leq 0$.  Similarly, since each $F_j$ is
			$q$-exceptional, we have
			\[
			q_*D
			=
			q_*\left(\sum_i ma_iE_i\right)
			\geq 0.
			\]
			Applying Lemma \ref{Negativity-Lemma}  to $h=q$ and $B=D$, we obtain $D\geq 0$.
			Consequently, $D=0$.
			
			Since all the coefficients $a_i$ and $b_j$ are strictly positive, the
			$p$-exceptional and $q$-exceptional prime divisors on $Z$ coincide.  It
			follows that $\tau$ is an isomorphism in codimension one.  Indeed, if
			$\tau$ contracted a prime divisor on $Y$, its strict transform (by \cite[p. 215]{GR84}, the  center of $p$ has codimension at least $2$ and the strict transforms are well-defined) on $Z$
			would be $q$-exceptional but not $p$-exceptional; if $\tau^{-1}$
			contracted a prime divisor on $Y_{\rm{min}}$, its strict transform on $Z$
			would be $p$-exceptional but not $q$-exceptional, a contradiction.
			This completes the proof of claim \ref{claim-small-BCDG}. 
		\end{proof}

		Since $Y_{\rm{min}}$ has terminal singularities, the pair
		$(Y_{\rm{min}},0)$ is klt.  By the small strongly
		$\mathbb Q$-factorialization theorem \cite[Lemma 2.11]{CH24}, there
		exists a small projective modification
		\[
		\rho:Y_{\rm{sqf}}\to Y_{\rm{min}}
		\]
		such that $Y_{\rm{sqf}}$ is a normal strongly $\mathbb Q$-factorial (Definition \ref{def-strong-Q-fac})
		space.  Since $\rho$ is projective and $Y_{\rm{min}}$ is compact
		K\"ahler, the space $Y_{\rm{sqf}}$ is compact K\"ahler.   
		
		Since $\rho$ is small, there exist analytic subsets
		$B_{\rm{min}}\subset Y_{\rm{min}}$ and
		$B_{\rm{sqf}}\subset Y_{\rm{sqf}}$, both of codimension at least two,
		such that $\rho$ induces an isomorphism
		\[
		Y_{\rm{sqf}}\setminus B_{\rm{sqf}}
		\cong
		Y_{\rm{min}}\setminus B_{\rm{min}}.
		\]
		Let
		\[
		\psi:=\rho^{-1}\circ\tau:Y\dashrightarrow Y_{\rm{sqf}}.
		\]
		Then $\psi$ is bimeromorphic and is an isomorphism in codimension one.
		It remains to prove the following claim.

		\begin{claim}\label{claim-termi-sin}
			$Y_{\rm{sqf}}$ has only terminal singularities.
		\end{claim}
		
		\begin{proof}[Proof of Claim \ref{claim-termi-sin}]
			
			Take a
			positive integer $r$ such that both 
			$\omega_{Y_{\rm{sqf}}}^{[r]}$ and   $\omega_{Y_{\rm{min}}}^{[r]}$ 
			are
			invertible sheaves. 
			
			Since the restriction
			\[
			\rho:Y_{\rm{sqf}}\setminus B_{\rm{sqf}}
			\to
			Y_{\rm{min}}\setminus B_{\rm{min}}
			\]
			is an isomorphism, the reflexive pluricanonical sheaves
			$\omega_{Y_{\rm{sqf}}}^{[r]}$ and
			$\rho^*\omega_{Y_{\rm{min}}}^{[r]}$ are isomorphic on
			$Y_{\rm{sqf}}\setminus B_{\rm{sqf}}$.  Both sheaves are invertible, and
			$B_{\rm{sqf}}$ has codimension at least two.  Thus this isomorphism
			extends uniquely to $Y_{\rm{sqf}}$, namely
			\begin{equation}\label{eq-rho}
				\omega_{Y_{\rm{sqf}}}^{[r]}
				\cong
				\rho^*\omega_{Y_{\rm{min}}}^{[r]}.
			\end{equation}
			
			Let
			\[
			a:W\to Y_{\rm{sqf}}
			\]
			be a resolution, where $W$ is a complex manifold, and set
			\[
			b:=\rho\circ a:W\to Y_{\rm{min}}.
			\]
			Then $b$ is a resolution of $Y_{\rm{min}}$.  By the
			resolution-independence of terminality, the terminality of
			$Y_{\rm{min}}$ may be tested via $b$.  Thus
			\begin{equation}\label{eq-ymin}
				\omega_W^{\otimes r}
				\cong
				b^*\omega_{Y_{\rm{min}}}^{[r]}
				\otimes
				\mathcal O_W\left(\sum_{i=1}^N r\beta_iG_i\right),
			\end{equation}
			where $G_1,\ldots,G_N$ are the $b$-exceptional prime divisors on $W$,
			and the numbers $\beta_i\in\mathbb Q$ satisfy
			$r\beta_i\in\mathbb Z$ and
			\begin{equation}\label{eq-pos}
				\beta_i>0
				\qquad
				\text{for each }1\leq i\leq N.
			\end{equation}
			
			The smallness of $\rho$ implies that the $a$-exceptional and
			$b$-exceptional prime divisors on $W$ coincide.  Indeed, if
			$G\subset W$ is $a$-exceptional, then $a(G)$ has codimension at least
			two in $Y_{\rm{sqf}}$, so $b(G)=\rho(a(G))$ has codimension at least two
			in $Y_{\rm{min}}$.  Thus $G$ is $b$-exceptional.  Conversely, if $G$
			is $b$-exceptional but not $a$-exceptional, then $a(G)$ is a prime
			divisor in $Y_{\rm{sqf}}$ whose image under $\rho$ has codimension at
			least two in $Y_{\rm{min}}$, contradicting the smallness of $\rho$.
			
			Using \eqref{eq-rho} and $b=\rho\circ a$, we get
			\[
			a^*\omega_{Y_{\rm{sqf}}}^{[r]}
			\cong
			a^*\rho^*\omega_{Y_{\rm{min}}}^{[r]}
			\cong
			b^*\omega_{Y_{\rm{min}}}^{[r]}.
			\]
			Substituting this into \eqref{eq-ymin}, we obtain
			\[
			\omega_W^{\otimes r}
			\cong
			a^*\omega_{Y_{\rm{sqf}}}^{[r]}
			\otimes
			\mathcal O_W\left(\sum_{i=1}^N r\beta_iG_i\right).
			\]
			By \eqref{eq-pos}, $Y_{\rm{sqf}}$ has only terminal singularities.
			This completes the proof of Claim \ref{claim-termi-sin} and hence the proof of  Lemma \ref{lem-sqf-model-KMM}.
		\end{proof}
	\end{proof}

	Lemma \ref{lem-sqf-model-KMM} provides a small strongly $\mathbb Q$-factorial
	terminal K\"ahler model, under the $\left(\mathrm{KMM}_{0,2n}\right)$ assumptions.
	We now show that,  the 
	symplectic form descends to this model and endows it with the structure
	of a primitive symplectic variety.

	\begin{proposition}\label{prop-kmm-model}
		Let $n\geq 2$, and assume that $\left(\mathrm{KMM}_{0,2n}\right)$ holds.
		Let $(Y,\sigma_Y)$ be a hyperfujiki manifold of dimension $2n$.  Then there
		exist a normal strongly $\mathbb Q$-factorial compact K\"ahler $2n$-fold $X$
		with only terminal singularities, a small bimeromorphic map
		\[
		\phi:Y\dashrightarrow X,
		\]
		and a reflexive two-form
		\[
		\sigma_X
		\in
		H^0\left(X,\left(\wedge^2\Omega_X^1\right)^{**}\right)
		\]
		satisfying 
		\begin{equation}
			\label{eq-kmm-form}
			\sigma_Y|_{U_Y}
			=
			\phi_U^*\left(\sigma_X|_{U_X}\right), 
		\end{equation} 
		where  $U_Y\subset Y$ and $U_X\subset X$ are 
		the maximal analytic Zariski open subsets on which $\phi$ induces a
		biholomorphism $\phi_U:U_Y\xrightarrow{\cong}U_X.$
		Moreover,
		\[
		\left(X,\sigma_X|_{X_{\rm{reg}}}\right)
		\]
		is a primitive symplectic variety in the sense of
		Definition \ref{def-symp-varie}.
		
		As a result, these conclusions
		hold for any $4$-dimensional hyperfujiki manifold, based on Observation \ref{rem-kmm-relation-dim 4}.
		
	\end{proposition}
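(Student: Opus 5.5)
The first step is to take $X:=Y_{\rm sqf}$ and $\phi:=\psi$ from Lemma \ref{lem-sqf-model-KMM}. This already gives a normal, strongly $\mathbb Q$-factorial, compact K\"ahler $2n$-fold with terminal singularities, together with a small bimeromorphic map $\phi:Y\dashrightarrow X$. Let $U_Y,U_X$ be the maximal Zariski open sets on which $\phi$ is biholomorphic. Since $\phi$ is small, both complements have codimension at least $2$, and $U_X\subset X_{\rm reg}$.

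Next I construct the form. On $U_X$ I set $\sigma_X|_{U_X}:=(\phi_U^{-1})^*(\sigma_Y|_{U_Y})$. Because $(\wedge^2\Omega_X^1)^{**}=j_*\Omega^2_{X_{\rm reg}}$ is reflexive on the normal space $X$ and $X\setminus U_X$ has codimension at least $2$, this section extends uniquely to $\sigma_X\in H^0(X,(\wedge^2\Omega_X^1)^{**})$. Equation \eqref{eq-kmm-form} then holds by construction.

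The remaining task is to check that $(X,\sigma_X|_{X_{\rm reg}})$ is a primitive symplectic variety.

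\emph{Extension to a resolution.} Take a common resolution $\mu:W\to Y$, $\nu:W\to X$, obtained as in Step 1 of Theorem \ref{thm-arb-positive}. The form $\mu^*\sigma_Y$ is holomorphic on $W$ and agrees with $\nu^*\sigma_X$ on $W_U$. By the identity theorem it agrees on all of $\nu^{-1}(X_{\rm reg})$. Hence $\nu^*(\sigma_X|_{X_{\rm reg}})$ extends holomorphically to $W$, and its closedness follows from that of $\sigma_Y$. (Alternatively, one can invoke the extension theorem of Kebekus--Schnell, since $X$ has rational singularities.)

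\emph{Nondegeneracy on $X_{\rm reg}$.} This is the main point. I would use the canonical sheaf. The form $\sigma_X^n$ is a section of $\omega_X$ on $X_{\rm reg}$. On $U_X$ it is nowhere vanishing, because $\sigma_Y^n$ trivializes $K_Y$. Its zero locus in $X_{\rm reg}$ is therefore a divisor $D\ge 0$ supported in $X_{\rm reg}\setminus U_X$. That set has codimension at least $2$, so $D=0$. Consequently $\sigma_X^n$ is nowhere vanishing on $X_{\rm reg}$, and $\sigma_X$ is nondegenerate there.

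\emph{Cohomological conditions.} First, $X$ is normal, compact and K\"ahler. Next, $H^1(X,\mathcal O_X)\cong H^1(W,\mathcal O_W)\cong H^1(Y,\mathcal O_Y)=0$. This uses that rational singularities give $R^i\nu_*\mathcal O_W=0$ for $i>0$, and that $h^{0,1}$ is a bimeromorphic invariant of smooth Fujiki manifolds; $H^1(Y,\mathcal O_Y)=0$ was noted before \eqref{iso-pica}.

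Finally, $H^0(X_{\rm reg},\Omega^2)=\mathbb C\sigma_X$. A reflexive $2$-form on $X$ restricts to $U_X\cong U_Y$. Hartogs extension on the smooth $Y$ across the codimension-$\ge 2$ set $Y\setminus U_Y$ turns it into an element of $H^0(Y,\Omega^2_Y)=\mathbb C\sigma_Y$. The reverse map is the extension used in the construction above. These are mutually inverse isomorphisms, which gives the claim.

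The final sentence of the proposition, about $4$-dimensional hyperfujiki manifolds, follows from Observation \ref{rem-kmm-relation-dim 4}.
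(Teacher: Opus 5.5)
Your proposal is correct and follows the paper's proof essentially step by step. Both take $X:=Y_{\rm sqf}$ from Lemma \ref{lem-sqf-model-KMM}, extend $(\phi_U^{-1})^*(\sigma_Y|_{U_Y})$ reflexively across the codimension-$\geq 2$ complement, and then check the primitive symplectic conditions the same way:
nondegeneracy from the divisorial zero locus of $\sigma_X^n$; the extension condition via the common resolution and the identity theorem; $H^1(X,\mathcal O_X)=0$ via rational singularities and Leray; and $H^0(X_{\rm reg},\Omega^2)=\mathbb C\sigma_X$ via Hartogs on $U_X\cong U_Y$.

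There are two minor differences. You deduce closedness of $\sigma_X$ through $W$, whereas the paper does it directly on $U_X$. You identify $H^1(W,\mathcal O_W)$ with $H^1(Y,\mathcal O_Y)$ by bimeromorphic invariance of $h^{0,1}$, whereas the paper uses $R^i\mu_*\mathcal O_W=0$. Both variants are valid.
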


	\begin{proof}
		Applying Lemma \ref{lem-sqf-model-KMM} to $Y$, we obtain a normal compact
		K\"ahler strongly $\mathbb Q$-factorial $2n$-fold $Y_{\rm{sqf}}$ with only
		terminal singularities and a small bimeromorphic map
		\[
		\psi:Y\dashrightarrow Y_{\rm{sqf}}
		\]
		which is an isomorphism in codimension one. Set
		\[
		X:=Y_{\rm{sqf}},
		\qquad
		\phi:=\psi.
		\]
		
		Let $U_Y\subset Y$ and $U_X\subset X$ be the maximal analytic Zariski open
		subsets on which $\phi$ induces a biholomorphism
		\[
		\phi_U:U_Y\xrightarrow{\cong}U_X.
		\]
		Since $\phi$ is small, we have
		\begin{equation}\label{codim-geq-2}
			\operatorname{codim}_Y(Y\setminus U_Y)\geq 2,
			\qquad
			\operatorname{codim}_X(X\setminus U_X)\geq 2.
		\end{equation}
		Since $Y$ is smooth and $U_Y\cong U_X$, the open subset $U_X$ is smooth. In
		particular, $U_X\subset X_{\rm{reg}}$.
		
		Set 
		\[
		s_U:=(\phi_U^{-1})^*(\sigma_Y|_{U_Y})
		\in H^0(U_X,\Omega^2_{U_X}).
		\]
		Since $U_X\subset X_{\rm{reg}}$, we have
		\[
		\left(\wedge^2\Omega_X^1\right)^{**}\big|_{U_X}
		\cong
		\Omega^2_{U_X}.
		\]
		Let $i:U_X\hookrightarrow X$ be the natural inclusion. Since $X$ is normal and
		$X\setminus U_X$ has codimension at least $2$, Serre's extension theorem
		(e.g., \cite[Proposition 5.29]{PR94}) for reflexive sheaves gives
		\[
		\left(\wedge^2\Omega_X^1\right)^{**}
		\cong
		i_*\left(
		\left(\wedge^2\Omega_X^1\right)^{**}\big|_{U_X}
		\right).
		\]
		Thus $s_U$ extends uniquely to a section
		\begin{equation}\label{def-sigma-X}
			\sigma_X\in
			H^0\left(X,\left(\wedge^2\Omega_X^1\right)^{**}\right).
		\end{equation}
		By construction, $\sigma_X|_{U_X}=s_U$, and therefore
		\[
		\sigma_Y|_{U_Y}
		=
		\phi_U^*\left(\sigma_X|_{U_X}\right).
		\]
		This proves \eqref{eq-kmm-form}.
		
		We now fix a common resolution which will be used in the rest of the proof.
		Let $\Gamma$ be the graph of $\phi$, endowed with the reduced structure, and
		let
		\[
		p:\Gamma\to Y,
		\qquad
		q:\Gamma\to X
		\]
		be the natural projections. Choose a resolution
		\[
		\rho:W\to\Gamma
		\]
		which is biholomorphic over $\Gamma_{\rm{reg}}$
		(e.g., \cite[Theorem 5.4.2]{AHV18}). Set
		\[
		\mu:=p\circ\rho:W\to Y,
		\qquad
		\nu:=q\circ\rho:W\to X.
		\]
		Then $\nu:W\to X$ is a resolution of $X$. We have the diagram
		\[
		\begin{tikzcd}
			& W \arrow[d,"\rho"]
			\arrow[ddl,bend right=15,"\mu"']
			\arrow[ddr,bend left=15,"\nu"] & \\
			& \Gamma \arrow[dl,"p"'] \arrow[dr,"q"] & \\
			Y \arrow[rr,dashed,"\phi"'] & & X .
		\end{tikzcd}
		\]
		Define
		\begin{equation}\label{def-Gamma-U}
			\Gamma_U:=p^{-1}(U_Y)=q^{-1}(U_X),
			\qquad
			W^\circ:=\nu^{-1}(X_{\rm{reg}}),
			\qquad
			W_U:=\rho^{-1}(\Gamma_U)=\mu^{-1}(U_Y)=\nu^{-1}(U_X).
		\end{equation}
		Then $\Gamma_U$ is the graph of the biholomorphism
		$\phi_U:U_Y\xrightarrow{\cong}U_X$. In particular, $\Gamma_U\cong U_Y$, so
		$\Gamma_U\subset\Gamma_{\rm{reg}}$. Since $\rho$ is biholomorphic over
		$\Gamma_{\rm{reg}}$, it is biholomorphic over $\Gamma_U$. Thus
		\[
		\mu|_{W_U}:W_U\xrightarrow{\cong}U_Y,
		\qquad
		\nu|_{W_U}:W_U\xrightarrow{\cong}U_X.
		\]
		For proving Proposition \ref{prop-kmm-model}, it suffices to prove the following claim.
		
		\begin{claim}\label{claim-pri-symp}
			The pair $\left(X,\sigma_X|_{X_{\rm{reg}}}\right)$ is a primitive symplectic
			variety.
		\end{claim}
		
		\begin{proof}[Proof of Claim \ref{claim-pri-symp}]
			It suffices to check the conditions in Definition \ref{def-symp-varie}. 
			First, we prove that $\sigma_X|_{X_{\rm{reg}}}$ is non-degenerate. Put
			\[
			\alpha:=\sigma_X|_{X_{\rm{reg}}}
			\in H^0(X_{\rm{reg}},\Omega^2_{X_{\rm{reg}}}).
			\]
			Then
			\[
			\alpha^{\wedge n}
			\in
			H^0(X_{\rm{reg}},\omega_{X_{\rm{reg}}}).
			\]
			On $U_X$, the form $\alpha^{\wedge n}$ corresponds under
			$\phi_U:U_Y\xrightarrow{\cong}U_X$ to $\left(\sigma_Y|_{U_Y}\right)^{\wedge n}.$
			Since $\sigma_Y$ is non-degenerate on $Y$, this section is nowhere vanishing on
			$U_X$. Therefore the zero locus of $\alpha^{\wedge n}$ is contained in
			$X_{\rm{reg}}\setminus U_X$. By \eqref{codim-geq-2}, this subset has
			codimension at least $2$ in $X_{\rm{reg}}$. On the other hand, the zero locus
			of a nonzero holomorphic section of a line bundle on a complex manifold is
			divisorial if it is non-empty. Thus the zero locus is empty. Therefore
			$\alpha=\sigma_X|_{X_{\rm{reg}}}$ is non-degenerate everywhere on
			$X_{\rm{reg}}$.
			
			Next, we prove that $\alpha$ is $d$-closed. By construction,
			\[
			\alpha|_{U_X}=s_U,
			\qquad
			\phi_U^*s_U=\sigma_Y|_{U_Y}.
			\]
			Since $Y$ satisfies the $\partial\overline{\partial}$-lemma, its holomorphic
			two-form $\sigma_Y$ is $d$-closed. Indeed, $\partial\sigma_Y$ is
			$\partial$-exact and $\overline{\partial}$-closed, so the
			$\partial\overline{\partial}$-lemma implies that it is
			$\partial\overline{\partial}$-exact; its bidegree $(3,0)$ then forces
			$\partial\sigma_Y=0$. Thus $d\alpha|_{U_X}=0$. Since $\alpha$ is a
			holomorphic two-form on $X_{\rm{reg}}$, the form $d\alpha$ is a holomorphic
			three-form on $X_{\rm{reg}}$. The open subset $U_X$ is dense in
			$X_{\rm{reg}}$, so the identity theorem gives the closedness $ d\alpha=0$
			on $X_{\rm{reg}}$.
			
			We now verify the extension condition. The pull-back
			\[
			\eta:=(\nu|_{W^\circ})^*(\sigma_X|_{X_{\rm{reg}}})
			\]
			is a holomorphic two-form on $W^\circ$. On $W_U$, the morphisms satisfy
			$\nu=\phi_U\circ\mu$. Therefore, by the construction of $s_U$,
			\begin{equation}\label{exten-sympl}
				\eta|_{W_U}
				=
				(\mu^*\sigma_Y)|_{W_U}.
			\end{equation}
			Since $W_U$ is dense in $W^\circ$, the identity theorem gives
			\[
			\eta=(\mu^*\sigma_Y)|_{W^\circ}.
			\]
			But $\mu^*\sigma_Y\in H^0(W,\Omega_W^2)$ is a holomorphic two-form on the
			complex manifold $W$. Thus $\mu^*\sigma_Y$ extends $\eta$ from $W^\circ$ to $W$.
			This verifies the extension condition in Definition \ref{def-symp-varie}.
			
			It remains to check primitiveness. Since
			$\operatorname{codim}_{X_{\rm{reg}}}(X_{\rm{reg}}\setminus U_X)\geq 2$ and
			$X_{\rm{reg}}$ is smooth, Hartogs extension for sections of vector bundles
			gives
			\[
			H^0(X_{\rm{reg}},\Omega^2_{X_{\rm{reg}}})
			\cong
			H^0(U_X,\Omega^2_{U_X}).
			\]
			Similarly, since $Y$ is smooth and
			$\operatorname{codim}_Y(Y\setminus U_Y)\geq 2$, we have
			\[
			H^0(Y,\Omega^2_Y)
			\cong
			H^0(U_Y,\Omega^2_{U_Y}).
			\]
			Using $U_Y\cong U_X$, we obtain
			\[
			\begin{aligned}
				H^0(X_{\rm{reg}},\Omega^2_{X_{\rm{reg}}})
				&\cong
				H^0(U_X,\Omega^2_{U_X})  \\
				&\cong
				H^0(U_Y,\Omega^2_{U_Y})  \\
				&\cong
				H^0(Y,\Omega^2_Y)
				=
				\mathbb C\sigma_Y.
			\end{aligned}
			\]
			Under the above composed isomorphism, $\sigma_X|_{X_{\rm{reg}}}$ is mapped  to
			$\sigma_Y$. Thus we have
			\[
			H^0(X_{\rm{reg}},\Omega^2_{X_{\rm{reg}}})
			=
			\mathbb C(\sigma_X|_{X_{\rm{reg}}}).
			\]
			
			Finally, we prove that $H^1(X,\mathcal O_X)=0$. Since $X$ has terminal
			singularities, it has rational singularities. Since $Y$ also has rational singularities, the
			morphisms $\nu:W\to X$ and $\mu:W\to Y$ satisfy
			\[
			\nu_*\mathcal O_W=\mathcal O_X,
			\qquad
			R^i\nu_*\mathcal O_W=0
			\quad
			\text{for each } i>0,
			\]
			and
			\[
			\mu_*\mathcal O_W=\mathcal O_Y,
			\qquad
			R^i\mu_*\mathcal O_W=0
			\quad
			\text{for each } i>0.
			\]
			It then follows from the Leray spectral sequence that
			\[
			H^1(X,\mathcal O_X)
			\cong
			H^1(W,\mathcal O_W)
			\cong
			H^1(Y,\mathcal O_Y).
			\]
			Since $Y$ is simply connected, $H^1(Y,\mathbb C)=0$. Since $Y$ satisfies the
			$\partial\overline{\partial}$-lemma, this implies $H^1(Y,\mathcal O_Y)=0$.
			Thus $H^1(X,\mathcal O_X)=0$.
			Therefore $\left(X,\sigma_X|_{X_{\rm{reg}}}\right)$ is a primitive symplectic
			variety in the sense of Definition \ref{def-symp-varie}. This completes the
			proof of Claim \ref{claim-pri-symp}.
		\end{proof}

		By construction, $X$ is a strongly $\mathbb Q$-factorial primitive symplectic
		K\"ahler variety with only terminal singularities,  and
		$\phi:Y\dashrightarrow X$ is small. Furthermore, $\sigma_X|_{X_{\rm{reg}}}$ satisfies the condition \eqref{eq-kmm-form}.
		This completes the proof of Proposition \ref{prop-kmm-model}.
	\end{proof}

	As a direct consequence of Theorem \ref{thm-positive-moishezon}, Proposition \ref{prop-kmm-model} and Theorem \ref{thm-arb-positive}, we obtain the following corollary. 
	
	\begin{corollary}
		\label{cor-hf-moishezon}
		Let $n\geq 2$, and assume that $\left(\mathrm{KMM}_{0,2n}\right)$ holds.
		Let $(Y,\sigma_Y)$ be a hyperfujiki manifold of dimension $2n$. Then we have the following
		\[
		Y\text{ is Moishezon}
		\Longleftrightarrow
		\exists v\in H^2(Y,\mathbb{Z})\cap H^{1,1}(Y)
		\text{ such that }
		q_Y(v)>0.
		\]
	\end{corollary}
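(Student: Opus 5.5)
The corollary follows by combining the two directions already established; no new geometric input is needed.

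For the implication ``$\Rightarrow$'', assume $Y$ is Moishezon. Theorem \ref{thm-arb-positive} applies in every dimension and without the hypothesis $\left(\mathrm{KMM}_{0,2n}\right)$. It gives a big line bundle $L_Y$ with $q_Y(c_1(L_Y))>0$. Set $v:=c_1(L_Y)$. This is the first Chern class of a holomorphic line bundle, so it lies in $H^2(Y,\mathbb Z)\cap H^{1,1}(Y)$. Hence $v$ is the required class.

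For the implication ``$\Leftarrow$'', assume there is $v\in H^2(Y,\mathbb Z)\cap H^{1,1}(Y)$ with $q_Y(v)>0$. Since $n\geq 2$ and $\left(\mathrm{KMM}_{0,2n}\right)$ holds, Proposition \ref{prop-kmm-model} provides the following data:
\begin{itemize}
\item a normal, strongly $\mathbb Q$-factorial, compact K\"ahler variety $X$ with only terminal singularities;
\item a small bimeromorphic map $\phi:Y\dashrightarrow X$;
\item a reflexive two-form $\sigma_X$ satisfying the compatibility \eqref{eq-kmm-form}, such that $(X,\sigma_X|_{X_{\rm reg}})$ is a primitive symplectic variety.
\end{itemize}
These are exactly the hypotheses of Theorem \ref{thm-positive-moishezon}. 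In particular, \eqref{eq-kmm-form} is literally \eqref{eq-form-comp}, with the same maximal open sets $U_Y$ and $U_X$. Theorem \ref{thm-positive-moishezon} therefore applies with this $v$ and shows that $Y$ is Moishezon.

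The only point needing care is this matching of hypotheses. One should check that the open sets $U_Y$ and $U_X$ in Proposition \ref{prop-kmm-model} are the same maximal Zariski open sets used in Theorem \ref{thm-positive-moishezon}, which is immediate from the two statements. No further obstacle arises.
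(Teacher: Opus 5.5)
Your proposal is correct and is essentially the paper's own argument. The forward direction is Theorem \ref{thm-arb-positive}, which needs no $(\mathrm{KMM}_{0,2n})$ hypothesis. The reverse direction feeds the small primitive symplectic model of Proposition \ref{prop-kmm-model} into Theorem \ref{thm-positive-moishezon}, and the paper likewise states the corollary as a direct consequence of exactly these three results.
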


	In particular, together with Observation \ref{rem-kmm-relation-dim 4},  we have the following corollary.
	
	\begin{corollary}
		\label{cor-hf-moishezon-4dim}
		Let $(Y,\sigma_Y)$ be a hyperfujiki $4$-fold. Then
		\[
		Y\text{ is Moishezon}
		\Longleftrightarrow
		\exists v\in H^2(Y,\mathbb{Z})\cap H^{1,1}(Y)
		\text{ such that }
		q_Y(v)>0.
		\]
	\end{corollary}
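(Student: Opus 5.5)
Corollary \ref{cor-hf-moishezon-4dim} is the special case $n=2$ of Corollary \ref{cor-hf-moishezon}, once we know $\left(\mathrm{KMM}_{0,4}\right)$ holds. So the plan is to verify that hypothesis and then run the two directions of Corollary \ref{cor-hf-moishezon} in dimension $4$.

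First, Observation \ref{rem-kmm-relation-dim 4} says the four-dimensional K\"ahler MMP of Das--Hacon--P\u{a}un gives $\left(\mathrm{KMM}_{0,4}\right)$. Let $(Y,\sigma_Y)$ be a hyperfujiki $4$-fold, so $n=2$. Then Proposition \ref{prop-kmm-model} (equivalently, Proposition \ref{prop-small-primitive-model} in case (2)) provides the following data:
\begin{enumerate}[\rm{(}1\rm{)}]
\item a compact K\"ahler, strongly $\mathbb Q$-factorial, terminal variety $X$;
\item a small bimeromorphic map $\phi:Y\dashrightarrow X$;
\item a reflexive two-form $\sigma_X$ satisfying \eqref{eq-kmm-form}, such that $(X,\sigma_X|_{X_{\rm reg}})$ is primitive symplectic.
\end{enumerate}

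For the direction ``$\Leftarrow$'', let $v\in H^2(Y,\mathbb Z)\cap H^{1,1}(Y)$ with $q_Y(v)>0$. The data above are exactly the hypotheses of Theorem \ref{thm-positive-moishezon}, which therefore yields that $Y$ is Moishezon. For the direction ``$\Rightarrow$'', Theorem \ref{thm-arb-positive} needs no dimension hypothesis. It gives a big line bundle $L_Y$ with $q_Y(c_1(L_Y))>0$, and we take $v:=c_1(L_Y)$, which is an integral $(1,1)$-class.

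All the real content sits in the results already proved, so there is no genuine obstacle at this stage. The only point to check carefully is that Observation \ref{rem-kmm-relation-dim 4} applies to the smooth K\"ahler modification $V\to Y$ used in Lemma \ref{lem-sqf-model-KMM}. The hypotheses required there are that $Y$ is smooth with $K_Y\cong\mathcal O_Y$ and that $V$ is compact K\"ahler. The first holds because $\sigma_Y^2$ trivializes $K_Y$, and the second holds because $Y\in\mathscr C$.
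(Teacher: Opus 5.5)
Your proposal is correct and follows the paper's route: the paper obtains this corollary directly from Corollary \ref{cor-hf-moishezon} once Observation \ref{rem-kmm-relation-dim 4} supplies $\left(\mathrm{KMM}_{0,4}\right)$. That corollary in turn rests on Theorem \ref{thm-positive-moishezon} via Proposition \ref{prop-kmm-model} for ``$\Leftarrow$'' and on Theorem \ref{thm-arb-positive} for ``$\Rightarrow$'', which is what you wrote.
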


	\section{Application to 
		the  description of the Moishezon locus}\label{sec-main-hf}

	As an application of the BBF-positivity characterization of Moishezonness (and projectiveness), combined with the period theory developed in \cite{ACRT18}, we obtain the following description of the Moishezon locus for certain smooth families. Note that Koll\'ar deals with 
	more general smooth families (\cite[Theorem 21]{Kol22}). But for families with each fiber being hyperfujiki or hyperk\"ahler, it
	only yields that either $\operatorname{Moi}(f)=S$ or
	$\operatorname{Moi}(f)$ is contained in a countable union of proper 
	analytic subsets.\footnote{Although $R^{2}f_{*}\mathcal{O}_{\mathcal{X}}$ is a holomorphic line bundle in this setting, only the zero loci of the sections produced by the exponential sequence are automatically hypersurfaces. The very-big loci occurring in Koll\'ar's argument are controlled only by proper analytic subsets. Furthermore, the argument of \cite[Theorem 21]{Kol22} yields only the inclusion ``is contained in", rather than an equality.}
	Note that when the base $S$ is $1$-dimensional (so proper analytic subsets of $S$ are exactly discrete subsets), this result is also essentially contained
	in \cite[Theorem 1.0.2, Proposition 4.0.8]{Ba15}, \cite[Theorem 1.4]{RT21} and \cite[Theorem 1.1]{LRWW24}.
	Clearly, this type of result readily yields results on degenerations of Moishezon or projective manifolds.

	\begin{theorem}\label{thm-intro-hf-copy2}
		Let $f:\mathcal{X}\to S$ be a proper holomorphic submersion from a complex
		manifold $\mathcal{X}$ to a connected simply connected complex manifold $S$, with
		connected fibers. Assume that one of the following two conditions holds:
		\begin{enumerate}
			\item 
			either each fiber $X_t:=f^{-1}(t)$ is a hyperfujiki $4$-fold, or each fiber $X_t$ is a hyperfujiki $2n$-fold and $\left(\mathrm{KMM}_{0,2n}\right)$ holds ($n\geq 3$);
			\item every fiber $X_t$ is a hyperk\"ahler manifold.
		\end{enumerate}
		Then the Moishezon locus, which in the hyperk\"ahler case coincides with the
		projective locus,
		\[
		\operatorname{Moi}(f):=\{\,t\in S\mid X_t\text{ is Moishezon}\,\},
		\]
		is either  $S$ or    at most  (possibly empty) a countable
		union of hypersurfaces of $S$.
	\end{theorem}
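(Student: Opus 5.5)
The plan is to reduce the statement to a Hodge-locus computation via Corollaries \ref{cor-hf-moishezon} and \ref{cor-hf-moishezon-4dim} (and, in case (2), Huybrechts' projectivity criterion \cite[Theorem 2]{Huy01}, together with the fact that a Moishezon K\"ahler manifold is projective). In every case the conclusion will have the form
\[
t\in\operatorname{Moi}(f)\iff \exists v\in H^2(X_t,\mathbb Z)\cap H^{1,1}(X_t)\text{ with }q_{X_t}(v)>0 .
\]
Since $f$ is a proper submersion and $S$ is simply connected, Ehresmann's theorem trivializes the local system $R^2f_*\mathbb Z$. We then fix a reference fibre $X_0$ and a lattice $\Lambda:=H^2(X_0,\mathbb Z)$, which is torsion free by the remark following \eqref{iso-pica}. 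This gives canonical identifications $H^2(X_t,\mathbb Z)\cong\Lambda$ for all $t$.

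Next I invoke the period theory of \cite{ACRT18}. The BBF form is, up to the positive constants $c_{X_t}$, a deformation-invariant nondegenerate form $q$ on $\Lambda_{\mathbb C}$; this uses the Fujiki relation and locally constant normalization. The period map $\mathcal P:S\to\mathbb P(\Lambda_{\mathbb C})$, $t\mapsto[\sigma_t]$, is holomorphic and lands in the period domain $\{q(x)=0,\ q(x,\bar x)>0\}$. Here $h^{2,0}=1$ and $H^{1,1}(X_t)$ is the $q$-orthogonal complement of $H^{2,0}\oplus H^{0,2}$, which follows from the $\partial\bar\partial$-lemma. Hence, for $v\in\Lambda$,
\[
v\in H^{1,1}(X_t)\iff B_q(v,\sigma_t)=0 .
\]
For each $v$ this is the zero locus $Z_v$ of a holomorphic section $t\mapsto B_q(v,\sigma_t)$ of the dual of the line bundle $f_*\Omega^2_{\mathcal X/S}$. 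Therefore
\[
\operatorname{Moi}(f)=\bigcup_{v\in\Lambda,\ q(v)>0}Z_v ,
\]
a countable union, and the sign of $q(v)$ is independent of $t$ by deformation invariance.

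Each $Z_v$ is either all of $S$ (the section vanishes identically, since $S$ is connected) or a hypersurface, possibly empty: locally it is the zero set of a single not-identically-zero holomorphic function, so every component has codimension exactly one. If some $Z_v=S$ with $q(v)>0$, then $\operatorname{Moi}(f)=S$. Otherwise all $Z_v$ are hypersurfaces or empty, and $\operatorname{Moi}(f)$ is a countable union of hypersurfaces.

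The main obstacle is the precise input from \cite{ACRT18}. We need that $q$ is locally constant up to positive scalar in the family; this uses the Fujiki relation $\int\alpha^{2n}=c\,q(\alpha)^n$ on these $\partial\bar\partial$-manifolds together with deformation invariance of the $\partial\bar\partial$-property along the given family. We also need that $H^{1,1}(X_t)\cap\Lambda$ is cut out by the single condition $B_q(v,\sigma_t)=0$. For this I would first check that $B_q(\sigma_t,\bar\sigma_t)>0$ and that $B_q(H^{1,1},\sigma_t)=0$, both immediate from \eqref{eq-bbf-arbitrary-form} by type considerations. A real $v$ orthogonal to $\sigma_t$ is then also orthogonal to $\bar\sigma_t$, so it has no $(2,0)$ or $(0,2)$ component.
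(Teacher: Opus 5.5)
Your proposal is correct and follows essentially the same route as the paper. You trivialize $R^2f_*\mathbb Z$, show that the transported BBF forms of the fibres are positive multiples of a fixed form $q$ on $\Lambda$, and cut out each Hodge locus as the zero set of $t\mapsto B_q(v,\sigma_t)$ for a holomorphic frame $\sigma_t$ of $H^{2,0}(X_t)$. You then use Corollaries \ref{cor-hf-moishezon}, \ref{cor-hf-moishezon-4dim} and Huybrechts' criterion to get $\operatorname{Moi}(f)=\bigcup_{q(v)>0}D_v$.

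The one difference is that the paper gets the positive-scalar comparison from \cite[Corollaries 4.4 and 4.7]{ACRT18} plus path-chaining, not from the Fujiki relation. If you use the Fujiki relation, note that for even $n$ (including the $4$-fold case) it only gives $q_{X_t}=\pm\lambda_t q$. The sign must then be fixed by continuity in $t$, which is what the paper does by comparing values on $\sigma_t+\bar\sigma_t$ over small neighbourhoods.
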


	\begin{remark}
		In the hyperk\"ahler case, it is classical that, for a sufficiently small
		representative of the Kuranishi family of a (projective) hyperk\"ahler manifold $X$, the
		locus in the base parametrizing projective fibers is a countable union of
		hypersurfaces (e.g., \cite[Remark 5.4]{GLR13}).
	\end{remark}
	
	Since Theorem \ref{thm-arb-positive} holds in arbitrary even dimension, the inclusion \eqref{moi-subset-Dv} in the proof of Theorem \ref{thm-intro-hf-copy2} remains valid for any family satisfying the assumptions of the following corollary. This yields the following, somewhat coarser, description of the Moishezon locus.
	
	\begin{corollary}
		Let $f:\mathcal X\to S$ be a proper holomorphic submersion from a
		complex manifold $\mathcal X$ onto a connected simply connected
		complex manifold $S$, with connected fibers. Assume, moreover, that
		each fiber $X_t$ is a hyperfujiki $2n$-fold. Then
		\[
		\operatorname{Moi}(f)
		\subseteq
		\bigcup_{\substack{v\in\Lambda\\ q(v)>0}}D_v,
		\]
		where $D_v$ is as in \eqref{def-Dv}. Consequently, unless $D_v=S$ for
		some $v\in\Lambda$ with $q(v)>0$, the Moishezon locus
		$\operatorname{Moi}(f)$ is contained in a union of at most countably
		many  hypersurfaces of $S$.
	\end{corollary}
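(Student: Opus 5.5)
The inclusion follows directly from Theorem \ref{thm-arb-positive} together with the period-theoretic setup that will be fixed in the proof of Theorem \ref{thm-intro-hf-copy2}. Since $S$ is simply connected and $f$ is a proper submersion, Ehresmann's theorem trivializes the local system $R^2f_*\mathbb Z$. I will fix a reference lattice $\Lambda\cong H^2(X_t,\mathbb Z)$ for all $t$ via this trivialization. By \cite{ACRT18}, the BBF forms $q_{X_t}$ are locally constant on $\Lambda$ up to positive scaling, and I will normalize them to a single form $q$. The sign of $q(v)$ is independent of $t$ and agrees with the sign of $q_{X_t}(v)$ for every $t$. For $v\in\Lambda$, I take $D_v:=\{t\in S\mid v\in H^{1,1}(X_t)\}$, as in \eqref{def-Dv}. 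Equivalently, $D_v$ is the locus where the pairing of $v$ with the period $[\sigma_t]$ vanishes. Since the fibers are simple, $H^{2,0}(X_t)=\mathbb C\sigma_t$, and the Hodge decomposition gives $v\in H^{1,1}$ iff $v$ is orthogonal (for $q$) to $\sigma_t$.

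To prove the inclusion, let $t\in\operatorname{Moi}(f)$. By Theorem \ref{thm-arb-positive}, applied to the hyperfujiki $2n$-fold $X_t$, there is a big line bundle $L$ on $X_t$ with $q_{X_t}(c_1(L))>0$. Set $v:=c_1(L)$, viewed in $\Lambda$ via the trivialization. Then $v$ is integral and of type $(1,1)$ on $X_t$, so $t\in D_v$, and $q(v)>0$ by the compatibility of the normalized forms. This gives $\operatorname{Moi}(f)\subseteq\bigcup_{q(v)>0}D_v$.

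For the consequence, $\Lambda$ is countable, so the union is countable. Each $D_v$ is the zero locus of the holomorphic function $t\mapsto B_q(v,\sigma_t)$, after choosing a local holomorphic frame $\sigma_t$ of the line bundle $f_*\Omega^2_{\mathcal X/S}$. Holomorphic dependence follows from the holomorphicity of the period map \cite{ACRT18}. If this function is not identically zero, then $D_v$ is a hypersurface or empty, since $S$ is connected. Otherwise $D_v=S$.

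The only delicate point, which I expect to be the main obstacle, is making sure that the local constancy of $q$ up to positive scaling and the holomorphicity of periods hold for the whole family of $\partial\bar\partial$ fibers. This is where I will cite \cite{ACRT18,CT18} rather than reprove it.
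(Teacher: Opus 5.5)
Your proposal is correct and follows essentially the same route as the paper. The inclusion is exactly the first half of Step 2 in the proof of Theorem \ref{thm-intro-hf-copy2}: Theorem \ref{thm-arb-positive} supplies the BBF-positive integral $(1,1)$-class, and Claim \ref{BBF-posi-preserve} (via \cite{ACRT18}) transports the sign of $q$. The hypersurface statement is Step 1 of that proof; the only cosmetic difference is that you use a local frame of $f_*\Omega^2_{\mathcal X/S}$ instead of pulling back the tautological line bundle along the local period map of \cite{ACRT18}, which is the alternative the paper itself mentions in a footnote.
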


	\begin{proof}[Proof of Theorem \ref{thm-intro-hf-copy2}]
		\setcounter{proofstep}{0}
		Since $S$ is simply connected, the local system $R^2f_*\mathbb{Z}$ is globally
		trivial. Fix a point $s_0\in S$, and set
		\[
		\Lambda:=H^2(X_{s_0},\mathbb{Z}).
		\]
		We identify $R^2f_*\mathbb{Z}\cong\Lambda_S$, where $\Lambda_S$ denotes the
		constant local system with fiber $\Lambda$. Under this identification, each
		$v\in\Lambda$ determines a constant section of $R^2f_*\mathbb{Z}$, still
		denoted by $v$. Its value at $t\in S$ is the Gauss--Manin parallel transport
		\[
		v_t\in H^2(X_t,\mathbb{Z})
		\]
		of $v$.

		Under the fixed global Gauss--Manin trivialization
		\[
		\tau:
		R^2f_*\mathbb{C}\otimes_{\mathbb{C}}\mathcal{O}_S
		\cong
		\Lambda_{\mathbb{C}}\otimes_{\mathbb{C}}\mathcal{O}_S,
		\qquad
		\Lambda_{\mathbb{C}}:=\Lambda\otimes_{\mathbb{Z}}\mathbb{C},
		\]
		let
		\[
		\tau_t:H^2(X_t,\mathbb{C})\longrightarrow\Lambda_{\mathbb{C}}
		\]
		be the induced isomorphism on the fiber over $t\in S$. Thus
		$\tau_t(v_t)=v$ for each $v\in\Lambda$. Choose a BBF form $q$ of the
		reference fiber $X_{s_0}$ and regard it as a quadratic form on
		$\Lambda_{\mathbb{R}}:=\Lambda\otimes_{\mathbb{Z}}\mathbb{R}$. We use the
		same symbol $q$ for its complexification to $\Lambda_{\mathbb{C}}$. Let $B_0$
		denote its symmetric complex-bilinear polarization on $\Lambda_{\mathbb{C}}$,
		namely,
		\[
		B_0(\alpha,\beta)
		:=
		\frac{1}{2}
		\bigl(
		q(\alpha+\beta)-q(\alpha)-q(\beta)
		\bigr),
		\qquad
		\alpha,\beta\in\Lambda_{\mathbb{C}}.
		\]
		For each $t\in S$, let $q_{X_t}$ be a BBF form of $X_t$, and let $B_t$
		denote its complex-bilinear polarization.

		Recall from Definition \ref{def-bbf-fourfold} that the BBF form is defined over $\mathbb{R}$, i.e., $q_{Z,\sigma}(\alpha)\in\mathbb{R}$ for any real class $\alpha\in H^2(Z,\mathbb{R})$.
		Then neither the BBF-positivity condition nor the associated
		orthogonality equation \eqref{equi-1-1type-orthogo} depends on the auxiliary positive constants $C_{X_t}$
		in Definition \ref{def-bbf-fourfold}. Since only the sign of the BBF square
		and the corresponding orthogonality equation are used, the normalization of
		the BBF forms will not matter.

		We now give  the precise comparison between
		$B_t$ and  $B_0$.
		
		\begin{claim}\label{BBF-posi-preserve}
			For each $t\in S$, there exists $c_t\in\mathbb{R}_{>0}$ such that
			\[
			B_t(\alpha,\beta)
			=
			c_tB_0\bigl(\tau_t(\alpha),\tau_t(\beta)\bigr)
			\]
			for each $\alpha,\beta\in H^2(X_t,\mathbb{C})$. Consequently, for each
			$v\in\Lambda$ and each $t\in S$,
			\[
			q(v)>0
			\quad\Longleftrightarrow\quad
			q_{X_t}(v_t)>0.
			\]
		\end{claim}
		
		\begin{proof}[Proof of Claim \ref{BBF-posi-preserve}]
			In both cases, each fiber is hyperfujiki and thus a simple
			$\partial\bar\partial$-complex symplectic manifold
			(\cite[Definition 1.1]{ACRT18}). Fix a point $a\in S$. After shrinking to a
			sufficiently small neighborhood $U$ of $a$, the restricted family over $U$ is
			induced from the Kuranishi universal deformation of $X_a$, which is smooth and
			universal by \cite[Corollary 3.5]{ACRT18}.
			
			After shrinking $U$ further if necessary, a holomorphic symplectic form on
			$X_a$ extends to a local relative holomorphic symplectic form by
			\cite[Proposition 3.1]{ACRT18}. Let $\Omega_U$ be such a relative form, and
			write
			\[
			\sigma_t:=\Omega_U|_{X_t}.
			\]
			By \cite[Corollary 3.5 and Corollary 4.7]{ACRT18}, under the Gauss--Manin
			identification of $H^2(X_t,\mathbb{C})$ with $H^2(X_a,\mathbb{C})$, the
			quadratic forms $q_{\sigma_a}$ and $q_{\sigma_t}$ define the same (projective) quadric, in
			the sense that they differ by a nonzero scalar:
			\begin{equation}\label{local-com-posi}
				q_{\sigma_t}=\lambda_U(t)q_{\sigma_a}
				\quad\text{on }H^2(X_a,\mathbb{C})
			\end{equation}
			for some $\lambda_U(t)\in\mathbb{C}^*$.\footnote{Note that in
				\cite[Corollary 4.7]{ACRT18}, the phrase ``define the same quadric'' means
				that the corresponding quadratic forms agree up to multiplication by a
				nonzero scalar. This is clear from the proof of
				\cite[Corollary 4.7]{ACRT18}, and is also precisely the sense in which
				\cite[Proposition 4.8]{ACRT18} invokes \cite[Corollary 4.7]{ACRT18}.} Applying \cite[Corollary 4.4]{ACRT18} to $\sigma_t+\bar{\sigma}_t$ (possibly after shrinking $U$ again), we obtain $\lambda_U(t)>0$.

			For each $t\in S$, choose a path $\gamma$
			from $s_0$ to $t$. The compact set $\gamma([0,1])$ is covered by finitely many
			neighborhoods on which the  local comparison \eqref{local-com-posi} holds. Thus, after
			subdividing the path, the BBF form of $X_t$,
			transported to $\Lambda_{\mathbb{R}}$, is obtained from $q$ by multiplying
			finitely many positive real numbers. Therefore there exists
			$c_t\in\mathbb{R}_{>0}$ such that the transported
			BBF form of $X_t$ is $c_tq$. Polarizing this equality
			gives
			\[
			B_t(\alpha,\beta)
			=
			c_tB_0\bigl(\tau_t(\alpha),\tau_t(\beta)\bigr)
			\]
			for each $\alpha,\beta\in H^2(X_t,\mathbb{C})$. The asserted equivalence
			of positivity follows immediately.  This completes the proof of Claim \ref{BBF-posi-preserve}.
		\end{proof}
		
		Recall that  the $2$-integral cohomology group of a hyperfujiki manifold is torsion free. For notational simplicity, we  do not
		distinguish an integral class from its images in real and complex cohomology.
		For each $v\in\Lambda$, set
		\begin{equation}\label{def-Dv}
			D_v:=\{\,t\in S\mid v_t\in H^{1,1}(X_t)\,\}.
		\end{equation}

		Recall that for a simple complex symplectic manifold $(Y,\sigma)$ satisfying the
		$\partial\bar\partial$-lemma, the following BBF-orthogonal Hodge decomposition
		holds (\cite[Propositions 2.8, 2.10]{CT18}):
		\[
		H^2(Y,\mathbb{C})
		=
		\operatorname{Span}\{[\sigma],[\bar{\sigma}]\}
		\oplus
		H_{\bar{\partial}}^{1,1}(Y).
		\]
		Let $B_{Y,\sigma}$ be the symmetric complex-bilinear polarization of a BBF
		form $q_{Y,\sigma}$.
		Then, for a real class $\alpha\in H^2(Y,\mathbb{R})$, one has
		\begin{equation}\label{equi-1-1type-orthogo}
			\alpha\in H^{1,1}(Y)
			\Longleftrightarrow
			B_{Y,\sigma}(\alpha,\sigma)=0,  
		\end{equation}
		because
		$B_{Y,\sigma}(\alpha,\bar{\sigma})
		=\overline{B_{Y,\sigma}(\alpha,\sigma)}$.
		Consequently,
		\[
		D_v
		=
		\{\,t\in S\mid B_t(v_t,\sigma_t)=0\,\},
		\]
		where $\sigma_t$ is any generator of $H^{2,0}(X_t)$.

		\begin{step}\label{step-hodge-locus}
			$D_v$ is an analytic subset for each $v\in\Lambda$
		\end{step}
		
		Fix $v\in\Lambda$. For any point $a\in S$, there exists an open neighborhood
		$U_a\subseteq S$ of $a$ such that the restricted family
		$f^{-1}(U_a)\to U_a$ is induced from the universal deformation of $X_a$.
		Since $X_a$ is a simple $\partial\bar\partial$-complex symplectic manifold,
		\cite[Corollary 3.5 and Theorem 4.1]{ACRT18} shows that, under the
		Gauss--Manin trivialization, the local period map
		\[
		\mathcal{P}_{U_a}:
		U_a\longrightarrow
		\operatorname{Grass}\bigl(1,\Lambda_{\mathbb{C}}\bigr),
		\qquad
		t\longmapsto H^{2,0}(X_t),
		\]
		is holomorphic. Let $\mathscr{U}$ denote the tautological line bundle on this
		Grassmannian. Then
		\[
		F^2:=\mathcal{P}_{U_a}^*\mathscr{U}
		\subset
		\Lambda_{\mathbb{C}}\otimes_{\mathbb{C}}\mathcal{O}_{U_a}
		\cong
		\left(
		R^2f_*\mathbb{C}\otimes_{\mathbb{C}}\mathcal{O}_S
		\right)\big|_{U_a}
		\]
		is a holomorphic line subbundle\footnote{This also follows by an argument
			similar to that in \cite[\S 10.2.1]{Voi02}, since
			\cite[Theorem 10.10]{Voi02} applies in the present setting. In the
			hyperk\"ahler case, the assertion follows directly from
			\cite[\S 10.2.1]{Voi02}.} whose fiber at $t\in U_a$ is
		$F_t^2=H^{2,0}(X_t)$.
		
		Let $V\subseteq U_a$ be a trivialization domain for $F^2$, and choose a local
		holomorphic frame $\sigma_V$ of $F^2|_V$. Then
		$H^{2,0}(X_t)=\mathbb{C}\sigma_V(t)$ for each $t\in V$. By Claim
		\ref{BBF-posi-preserve}, for each $t\in V$ one has
		\[
		\begin{aligned}
			v_t\in H^{1,1}(X_t)
			&\Longleftrightarrow
			B_t\bigl(v_t,\tau_t^{-1}(\sigma_V(t))\bigr)=0\\
			&\Longleftrightarrow
			c_tB_0\bigl(v,\sigma_V(t)\bigr)=0\\
			&\Longleftrightarrow
			B_0\bigl(v,\sigma_V(t)\bigr)=0,
		\end{aligned}
		\]
		where the second equivalence uses $\tau_t(v_t)=v$. Since
		$v\in\Lambda_{\mathbb{C}}$ is fixed, the linear functional
		$B_0(v,-)\in\Lambda_{\mathbb{C}}^{\vee}$ is fixed. Since $\sigma_V$ is
		holomorphic, the function
		\[
		g_{v,V}(t):=B_0\bigl(v,\sigma_V(t)\bigr)
		\]
		is holomorphic on $V$. Therefore $D_v\cap V$ is the zero locus of the
		holomorphic function $g_{v,V}$.
		
		Since the trivialization domains $V$ cover $U_a$ and closedness is a local
		property, $D_v\cap U_a$ is an analytic subset of $U_a$. Since the
		neighborhoods $U_a$ cover $S$, the subset $D_v$ is an analytic subset of $S$.
		Clearly, for each $v\in\Lambda$, the set $D_v$ is either $S$, empty, or  a  hypersurface of $S$.

		\begin{step}\label{step-moi-locus}
			The description of the Moishezon locus
		\end{step}
		
		We claim that
		\[
		\operatorname{Moi}(f)
		=
		\bigcup_{\substack{v\in\Lambda\\ q(v)>0}}D_v.
		\]
		Recall from Huybrechts' projectivity criterion
		(\cite[Theorem 2]{Huy01}, \cite[Theorem 3.11]{Huy99}) and Corollaries
		\ref{cor-hf-moishezon-4dim} and  
		\ref{cor-hf-moishezon} that, in both cases of the theorem, the following
		characterization holds:
		\begin{equation}\label{Moi-charac-both}
			\begin{aligned}
				X_t\text{ is Moishezon}
				\quad\Longleftrightarrow\quad
				&\exists\alpha\in H^2(X_t,\mathbb{Z})\cap H^{1,1}(X_t)
				\text{ such that }q_{X_t}(\alpha)>0.
			\end{aligned}
		\end{equation}
		
		Let $t\in\operatorname{Moi}(f)$. By \eqref{Moi-charac-both}, there exists
		\[
		\alpha\in H^2(X_t,\mathbb{Z})\cap H^{1,1}(X_t)
		\]
		such that $q_{X_t}(\alpha)>0$. Under the Gauss--Manin trivialization over
		$S$, the class $\alpha$ equals $v_t$ for some $v\in\Lambda$. By Claim
		\ref{BBF-posi-preserve}, one has $q(v)>0$, while
		$\alpha=v_t\in H^{1,1}(X_t)$ gives $t\in D_v$. Thus
		\begin{equation}\label{moi-subset-Dv}
			t\in
			\bigcup_{\substack{v\in\Lambda\\ q(v)>0}}D_v. 
		\end{equation}

		Conversely, suppose that $t\in D_v$ for some $v\in\Lambda$ with $q(v)>0$.
		Then
		\[
		v_t\in H^2(X_t,\mathbb{Z})\cap H^{1,1}(X_t),
		\]
		and Claim \ref{BBF-posi-preserve} gives $q_{X_t}(v_t)>0$. By the
		characterization \eqref{Moi-charac-both}, the fiber $X_t$ is Moishezon. Thus
		$t\in\operatorname{Moi}(f)$, and therefore
		\[
		\operatorname{Moi}(f)
		=
		\bigcup_{\substack{v\in\Lambda\\ q(v)>0}}D_v.
		\]
		
		Finally, $\Lambda=H^2(X_{s_0},\mathbb{Z})$ is countable, and each $D_v$ is 
		either $S$, empty, or  a hypersurface of $S$.
		Therefore $\operatorname{Moi}(f)$ is either $S$  or    at most (possibly empty)  a countable
		union of hypersurfaces of $S$.
		This completes the proof of Theorem
		\ref{thm-intro-hf-copy2}.
	\end{proof}

	\section*{Acknowledgements}
	
	The author would like to express his sincere gratitude to Professor Sheng Rao for  valuable discussions on related topics and constant encouragement and help.  He would also like to thank Professor I-Hsun Tsai for valuable
	discussions on related topics.


\begin{thebibliography}{GP}
		
		
		
		
		\bibitem[AHV18]{AHV18}
		J. M. Aroca, H. Hironaka, J. L. Vicente, \textit{Complex analytic desingularization},
		With a foreword by Bernard Teissier. Springer, Tokyo, 2018.
		
		\bibitem[ACRT18]{ACRT18}
		B. Anthes, A. Cattaneo, S. Rollenske, A. Tomassini,
		\textit{$\partial\bar\partial$-complex symplectic and Calabi--Yau manifolds:
			Albanese map, deformations and period maps},
		Ann. Global Anal. Geom. \textbf{54} (2018), 377-398.
		
		
		
		\bibitem[BL22]{BL22}
		B. Bakker, C. Lehn,
		\textit{The global moduli theory of symplectic varieties},
		J. Reine Angew. Math. \textbf{790} (2022), 223-265, \href{https://arxiv.org/pdf/1812.09748}{arXiv:1812.09748v4}.	
		
		
		\bibitem[Ba15]{Ba15}
		D. Barlet, \textit{Two semi-continuity results for the algebraic dimension of compact complex manifolds}, J. Math.
		Sci. Univ. Tokyo 22 (2015), no. 1, 39-54.
		
		
		
		\bibitem[Bea00]{Bea00}
		A.  Beauville,  \textit{Symplectic singularities},  Invent. Math., 139(3):541-549, 2000. 
		
		
		
		
		\bibitem[BCDG25]{BCDG25}
		I. Biswas, J. Cao, S. Dumitrescu, H. Guenancia,
		\textit{Geometry of $K$-trivial Moishezon manifolds: decomposition theorem and
			holomorphic geometric structures},
		Math. Ann. \textbf{391} (2025), no. 2, 3181--3220.
		
		
		
		
		
		\bibitem[CT18]{CT18}
		A. Cattaneo, A. Tomassini,
		\textit{Complex symplectic structures and the $\partial\bar{\partial}$-lemma},
		Annali di Matematica \textbf{197} (2018), 139-151.
		
		
		\bibitem[C25]{C25}
		J. Chen, \textit{Criteria for a fiberwise Fujiki/K\"ahler family to be locally Moishezon/projective},
		\href{https://arxiv.org/pdf/2503.07548v3}{arXiv:2503.07548v3}.
		
		
		\bibitem[CH24]{CH24}
		B. Claudon, A. H\"oring, \textit{Projectivity criteria for K\"ahler morphisms}, \href{https://arxiv.org/pdf/2404.13927}{arXiv:2404.13927}.	
		
		\bibitem[DHP24]{DHP24}
		O. Das, C. Hacon, M. P\u{a}un,
		\textit{On the $4$-dimensional minimal model program for K\"ahler varieties},
		Adv. Math. 443 (2024), Paper No. 109615.
		
		
		
		\bibitem[Fu78/79]{Fu78/79}
		A. Fujiki,  \textit{Closedness of the Douady spaces of compact K\"ahler spaces}, Publ. Res. Inst. Math. Sci. 14 (1978/79), no. 1, 1-52.
		
		
		
		\bibitem[Fj22]{Fj22}
		O. Fujino,
		\textit{Minimal model program for projective morphisms between complex analytic spaces},
		\href{https://arxiv.org/pdf/2201.11315}{arXiv:2201.11315}.
		
		
		\bibitem[GR84]{GR84}
		H. Grauert, R. Remmert, \textit{Coherent analytic sheaves}, Grundlehren der Mathematischen Wissenschaften [Fundamental Principles of Mathematical Sciences], 265. Springer-Verlag, Berlin, 1984.
		
		
		\bibitem[GLR13]{GLR13}
		D. Greb, C. Lehn, S. Rollenske,
		\textit{Lagrangian fibrations on hyperk\"ahler manifolds - On a question of Beauville},
		Ann. Sci. \'Ec. Norm. Sup\'er. (4) 46 (2013), no. 3, 375-403.
		
		
		
		\bibitem[GHJ02]{GHJ02}
		M. Gross, D. Huybrechts, D. Joyce,
		\textit{Calabi--Yau manifolds and related geometries:
			Lectures at a Summer School in Nordfjordeid, Norway, June 2001},
		Universitext, Springer-Verlag, Berlin, 2003.
		
		%
		
		\bibitem[Huy99]{Huy99}
		D. Huybrechts,
		\textit{Compact hyperk\"ahler manifolds: basic results},
		Invent. Math. \textbf{135} (1999), 63-113; Erratum, Invent. Math.
		\textbf{152} (2003), 209-212.
		
		
		\bibitem[Huy01]{Huy01}
		D. Huybrechts, \textit{Erratum to: Compact hyperk\"ahler manifolds: basic results}, arXiv:math/0106014; published as Invent. Math. 152(2003), 209-212.
		
		\bibitem[Huy16]{Huy16}
		D. Huybrechts, \textit{Lectures on K$3$ surfaces},
		Cambridge Studies in Advanced Mathematics, vol. 158,
		Cambridge University Press, 2016.
		
		
		\bibitem[Kl01]{Kl01}
		D. Kaledin, \textit{Symplectic resolutions: Deformations and birational maps}, \href{https://arxiv.org/pdf/math/0012008}{arXiv:math/0012008v2}.
		
		
		
		
		\bibitem[KS21]{KS21}
		S. Kebekus, C. Schnell,
		\textit{Extending holomorphic forms from the regular locus of a
			complex space to a resolution of singularities},
		J. Amer. Math. Soc. 34 (2021), no. 2, 315-368.
		
		\bibitem[KK13]{KK13}
		J. Koll\'ar,
		\textit{Singularities of the Minimal Model Program},
		with a collaboration of S. Kov\'acs,
		Cambridge Tracts in Mathematics, vol. 200,
		Cambridge University Press, Cambridge, 2013.
		
		
		\bibitem[Kol22]{Kol22}
		J.  Koll{\'a}r,    \textit{Moishezon morphisms}, Pure Appl. Math. Q. 18 (2022), no. 4, 1661-1687.
		
		\bibitem[KM98]{KM98}
		J. Koll\'ar, S. Mori,  \textit{Birational Geometry of Algebraic Varieties}, Cambridge Tracts in Mathematics,
		vol. 134, Cambridge University Press, Cambridge, 1998, With the collaboration of C. H. Clemens
		and A. Corti, Translated from the 1998 Japanese original.
		
		
		
		\bibitem[LRWW24]{LRWW24}
		M.  Li, S. Rao, K.  Wang, M.  Wang,  \textit{Smooth deformation limit of Moishezon manifolds is Moishezon},  \href{https://arxiv.org/pdf/2407.02022}{arXiv:2407.02022v2}.
		
		\bibitem[MM07]{MM07}	
		X. Ma, G. Marinescu, \textit{Holomorphic Morse inequalities and Bergman kernels}, Progress in Mathematics, 254.
		Birkh\"auser Verlag, Basel, 2007. 
		
		
		
		
		\bibitem[Mt02]{Mt02}
		K. Matsuki,  \textit{Introduction to the Mori program, Universitext}, Springer-Verlag, New York, 2002. xxiv+478 pp.
		
		\bibitem[Nam02]{Nam02}
		Y. Namikawa,
		\textit{Projectivity criterion of Moishezon spaces and density of projective
			symplectic varieties},
		Internat. J. Math. \textbf{13} (2002), no. 2, 125-135.
		
		\bibitem[OSS11]{OSS11}
		C. Okonek, M. Schneider, H. Spindler, \textit{Vector bundles on complex projective spaces, Corrected
			reprint of the 1988 edition, With an appendix by S. I. Gelfand}, Modern Birkh\"auser Classics.
		Birkh\"auser/Springer Basel AG, Basel, 2011.
		
		
		\bibitem[PR94]{PR94} Th. Peternell, R. Remmert, \textit{Differential calculus, holomorphic maps and linear structures on complex spaces}, Several complex variables VII, 99-143, Encyclopaedia of Mathematical Sciences volume 74, Springer-Verlag, Berlin 1994.
		
		
		
		\bibitem[RT21]{RT21}
		S. Rao, I-Hsun Tsai, \textit{Deformation limit and bimeromorphic embedding of Moishezon
			manifolds}, Commun. Contemp. Math. 23 (2021), no. 8, Paper No. 2050087, 50 pp.
		
		\bibitem[SSvL10]{SSvL10}
		M. Sch\"utt, T. Shioda, R. van Luijk,
		\textit{Lines on Fermat surfaces},
		J. Number Theory 130 (2010), no. 9, 1939-1963.
		
		\bibitem[St88]{St88}
		J. Stevens,  \textit{On canonical singularities as total spaces of deformations}, Abh. Math. Sem. Univ. Hamburg 58 
		(1988) 275-283.
		
		
		\bibitem[Tu11]{Tu11}
		L. W. Tu, \textit{An introduction to manifolds},
		Second Edition, Universitext, Springer, New York, 2011.
		
		\bibitem[Ue75]{Ue75}
		K. Ueno,
		\textit{Classification Theory of Algebraic Varieties and Compact Complex Spaces},
		Lecture Notes in Mathematics, vol. 439, Springer, Berlin--Heidelberg, 1975.
		
		\bibitem[Vr89]{Vr89}
		J. Varouchas,  \textit{K\"ahler spaces and proper open morphisms}, 	Math. Ann. 283 (1989), no. 1, 13-52.	
		
		\bibitem[Voi02]{Voi02}
		C. Voisin, \textit{Hodge Theory and Complex Algebraic Geometry I}, Cambridge Studies in Advanced Mathematics 76, Cambridge University Press, 2002.
		
		
		
		
		\bibitem[W21]{W21}
		J. Wang,
		\textit{On the Iitaka conjecture $C_{n,m}$ for K\"ahler fibre spaces},
		Ann. Fac. Sci. Toulouse Math. (6) 30 (2021), no. 4, 813-897.
		
		\bibitem[Wie00]{Wie00}
		J. Wierzba, \textit{Symplectic singularities, Ph.D. thesis}, Cambridge University, 2000.
		
		\bibitem[Wie03]{Wie03}
		J. Wierzba,
		\textit{Contractions of symplectic varieties},
		J. Algebraic Geom. 12 (2003), no. 3, 507-534.
		
		\bibitem[WW03]{WW03}
		J. Wierzba, J. A. Wi{\'s}niewski,
		\textit{Small contractions of symplectic $4$-folds},
		Duke Math. J. 120 (2003), no. 1, 65-95.
		
		\bibitem[Y01]{Y01}
		K.  Yoshioka,  \textit{Moduli spaces of stable sheaves on abelian surfaces},  Math. Ann. 321(4), 817-884 (2001).
		
		
	\end{thebibliography}
\end{document}